\newcommand\cyr
\renewcommand\rmdefault{wncyr}
\renewcommand\sfdefault{wncyss}
\renewcommand\encodingdefault{OT2}
\DeclareTextFontCommand{\textcyr}{\cyr}
\def\cprime{\char"7E }
\newtheorem{theorem}{Theorem}
\newtheorem{lemma}[theorem]{Lemma}
\newtheorem{definition}[theorem]{Definition}
\newtheorem{remark}[theorem]{Remark}
\newtheorem{hypothesis}[theorem]{Hypothesis}
\chardef\bslash=`\\ 
\newcommand{\wh}{\widehat}
\newcommand{\dA}{{\dot A}}
\newcommand{\bbR}{{\mathbb{R}}}
\newcommand{\bbC}{{\mathbb{C}}}
\newcommand{\ti}{\tilde  }
\newcommand{\dom}{\text{\rm{Dom}}}
\newcommand{\calD}{{\mathcal D}}
\newcommand{\calH}{{\mathcal H}}
\newcommand{\calR}{{\mathcal R}}
\newcommand{\calS}{{\mathcal S}}
\newcommand{\mM}{\mathfrak M}
\newcommand{\whA}{T}
\renewcommand{\Im}{\text{\rm Im}}
\def\sM{{\mathfrak M}}   \def\sN{{\mathfrak N}}
\def\bA{{\mathbb A}}      \def\dC{{\mathbb C}}
      \def\dR{{\mathbb R}}
   \def\cH{{\mathcal H}}
\def\RE{{\rm Re\,}}
\def\Ker{{\rm Ker\,}}
\def\wh{\hat}
\def\uphar{{\upharpoonright\,}}
\DeclareMathOperator{\IM}{Im}
\newcommand{\eval}[2][\right]{\relax
  \ifx#1\right\relax \left.\fi#2#1\rvert}
\begin{document}

\title{On the c-entropy of L-systems with Schr\"odinger  operator}

\author{S. Belyi}
\address{Department of Mathematics\\ Troy University\\
Troy, AL 36082, USA\\
}
\curraddr{}
\email{sbelyi@troy.edu}


\author[Makarov]{K. A. Makarov}
\address{Department of Mathematics\\
 University of Missouri\\
  Columbia, MO 63211, USA}
\email{makarovk@missouri.edu}

\thanks{The second author was partially supported by the Simons collaboration
grant  00061759  while preparing this paper.}

\author{E. Tsekanovskii}
\address{Department of Mathematics\\ Niagara University, New York
14109\\ USA}
\email{tsekanov@niagara.edu}


\subjclass{Primary 47A10; Secondary 47N50, 81Q10}
\date{DD/MM/2004}

\dedicatory{In loving memory of Moshe Liv\u sic}

\keywords{L-system, transfer function, impedance function,  Herglotz-Nevan\-linna function, Donoghue class, sectorial operator, extremal operator, Schr\"odinger  operator}

\begin{abstract}
We study L-systems whose main operators are  extensions of one-dimensional half-line Schr\"odinger operators with deficiency indices $(1, 1)$, the Schr\"odinger L-systems. 
Introducing new concepts of an c-entropy and dissipation coefficient for an L-system  we discuss the following  dual problems: describe Schr\"odinger L-systems (1) with a given c-entropy and minimal  dissipation coefficient, and (2) with a  given dissipation coefficient and maximal c-entropy. Also, we analyze  in detail the  dual c-entropy problems for Schr\"odinger L-systems with sectorial and extremal main operators. 

 \end{abstract}

\maketitle



\textit{This paper is dedicated to the memory of Moshe Liv\u sic, a remarkable Human Being and Mathematician. His pioneering research in the theory of non-selfadjoint operators  and system theory  \cite{Lv2} has made writing this paper possible. The following lines from a Russian poet Nikolaj Gumilev (Prophets, 1905) describe the life and the light of scientific accomplishments of Liv\u sic really well}

\begin{center}
{\cyr \qquad\textsf{\ \ \  I nyne est{\cprime}  eshche proroki,}}

{\cyr \textsf{{\ Hotya upali altari,}}}

{\cyr \qquad\textsf{Ih ochi yasny i gluboki,}}

{\cyr \qquad\textsf{\ \ \ Gryadushchim plamenem zari.}}
\end{center}

\section{Introduction}\label{s1}

In the current paper we set focus on L-systems  whose main operators are  extensions of the minimal  one-dimensional Schr\"odinger operators with deficiency indices $(1, 1)$ on the half-line. 
We refer to such  L-systems as \textit{Schr\"odinger L-systems}. As a main highlight of the current paper we introduce a new concept of an L-system \textit{coupling entropy} (or \textit{c-entropy}), which is an additive quantity with respect to the coupling of L-systems (see  \cite{BMkT-2} for the concept of coupling).
 We relate the c-entropy of a Schr\"odinger L-system to the dissipation coefficient of its main operator $T_h$ and pose the following  \textit{dual c-entropy  problems}. The first problem is to describe a Schr\"odinger L-system with a given c-entropy and minimal  dissipation coefficient, while the second one is to construct a Schr\"odinger L-system that has a given dissipation coefficient and maximal c-entropy.
 We solve these dual problems for several classes of Schr\"odinger L-systems, in particular, for the Schr\"odinger L-systems whose impedance functions belong to one of the generalized Donoghue classes  $\sM_\kappa$ and $\sM_\kappa^{-1}$ introduced in \cite{BMkT}.  As an auxiliary result,  we obtain  a criteria  (in terms of the boundary value of the main operators) for the impedance functions  to belong to one of (generalized) Donog\-hue classes $\sM$, $\sM_\kappa$, and $\sM_\kappa^{-1}$ and then  solve the dual c-entropy problems for     Schr\"odinger L-systems with extremal and $\beta$-sectorial main operators.

The paper is organized as follows. The formal definitions of general and Schr\"odinger L-systems as well as corresponding function classes are presented in Sections \ref{s2},  \ref{s3} and \ref{s4}. We capitalize on the fact that class of Schr\"odinger L-systems $\Theta_{\mu,h}$ forms a two-parametric family whose members are uniquely defined by a real-valued parameter $\mu$ and a complex boundary value $h$, ($\IM h>0$) of the main operator. Here the parameter $\mu\in\Bbb R\cup \{\infty\}$ uniquely defines (for a given $h$) the state-space operator $\bA_{\mu,h}$ of the L-system $\Theta_{\mu,h}$,  thus fixing  $\Theta_{\mu,h}$ in a unique way.

 In Section \ref{s4} we establish a connection between the absolute value  of the von Neumann parameter of the main operator $T_h$ and its boundary value parameter $h$ and put forward the definition of the dissipation coefficient.  Also, we obtain a  criteria for the impedance functions of L-systems with Schr\"odinger operator to fall into one of the (generalized) Donog\-hue classes $\sM$, $\sM_\kappa$, and $\sM_\kappa^{-1}$ (see Theorems \ref{t-6}-\ref{t-7-1}). It is worth mentioning that  Theorem \ref{t-6} describes an entire family of Schr\"odinger L-systems whose impedance functions belong to the Donog\-hue class $\sM$, while  Theorems \ref{t-7}-\ref{t-7-1} deal with  explicitly  defined Schr\"odinger L-systems whose impedance functions are members of the generalized Donoghue classes $\sM_\kappa$ and $\sM_\kappa^{-1}$, respectively.

In Sections \ref{s5} and \ref{s6} we discuss  an analogue  of the Phillips-Kato extension problem for a non-negative Schr\"odinger operator  with deficiency indices $(1, 1)$ in the Hilbert space $\calH=L^2(\ell, \infty)$. Recall that in  general the Philips-Kato extension problem concerns the existence and description of all maximal accretive and/or sectorial non-self-adjoint extensions $A$ of a non-negative symmetric operator $\dA$ such that $\dA\subset A\subset\dA^*$.

In Section \ref{s7} we introduce the concept of an L-system c-entropy and relate it to   L-system's dissipation coefficient introduced  in Section \ref{s4}. Two following dual problems  associated with the c-entropy of  Schr\"odinger L-systems are considered:
\begin{itemize}
      \item \textit{Give a description of an L-system with the Schr\"odinger  dissipative main operator that has a given c-entropy and the  \textit{minimal } dissipation coefficient.}
      \item \textit{Describe an L-system with Schr\"odinger main operator that has a given dissipation coefficient and the \textit{maximal} c-entropy.}
\end{itemize}
In Sections \ref{s7} and \ref{s8} we respectively present the solutions to both problems posed for the Schr\"odinger L-systems whose impedance functions belong to one of the generalized Donoghue classes  $\sM_\kappa$ and $\sM_\kappa^{-1}$.

In Section \ref{s9} we solve the dual c-entropy problems for the  classes of  Schr\"odinger L-systems with extremal and $\beta$-sectorial main operators. Dealing with these two cases we do  not require  that the impedance functions of the Schr\"odinger L-systems in question   belong to one of the generalized Donoghue classes. As   a result, we present the solution to dual c-entropy problems for the entire one-parametric family of Schr\"odinger L-systems $\Theta_{\mu,h}$.
\   We also treat a combined case of extremal and $\beta$-sectorial main operators to see when the  corresponding  L-system has a maximal c-entropy. It turns out that the Schr\"odinger L-system with accretive (either $\beta$-sectorial ($\beta\in(0,\pi/2)$ or extremal) main operator $T_h$  attains the maximum c-entropy when the main operator is extremal accretive. Moreover, in the case when both main and state-space operators are extremal,  the quasi-kernel in the corresponding Schr\"odinger L-system coincides with  the Krein-von Neumann extension of the underlying symmetric operator.
In addition, we find the conditions when the impedance function of the Schr\"odinger L-system with the main extremal operator that has a maximum c-entropy belongs to the generalized Donoghue classes $\sM_\kappa$ or $\sM_\kappa^{-1}$, respectively.

We  conclude the paper with providing  examples that illustrate   main results.

\section{Preliminaries}\label{s2}

For a pair of Hilbert spaces $\calH_1$, $\calH_2$ we denote by
$[\calH_1,\calH_2]$ the set of all bounded linear operators from
$\calH_1$ to $\calH_2$. Let $\dA$ be a closed, densely defined,
symmetric operator in a Hilbert space $\calH$ with inner product
$(f,g),f,g\in\calH$. Any non-symmetric operator $T$ in $\cH$ such that
\[
\dA\subset T\subset\dA^*
\]
is called a \textit{quasi-self-adjoint extension} of $\dA$.

 Consider the rigged Hilbert space (see \cite{ABT}, \cite{Ber})
$\calH_+\subset\calH\subset\calH_- ,$ where $\calH_+ =\dom(\dA^*)$ and
\begin{equation}\label{108}
(f,g)_+ =(f,g)+(\dA^* f, \dA^*g),\;\;f,g \in \dom(A^*).
\end{equation}
Let $\calR$ be the \textit{\textrm{Riesz-Berezansky   operator}} $\calR$ (see  \cite{ABT}, \cite{Ber}) which maps $\mathcal H_-$ onto $\mathcal H_+$ such
 that   $(f,g)=(f,\calR g)_+$ ($\forall f\in\calH_+$, $g\in\calH_-$) and
 $\|\calR g\|_+=\| g\|_-$.
 Note that
identifying the space conjugate to $\calH_\pm$ with $\calH_\mp$, we
get that if $\bA\in[\calH_+,\calH_-]$, then
$\bA^*\in[\calH_+,\calH_-].$
An operator $\bA\in[\calH_+,\calH_-]$ is called a \textit{self-adjoint
bi-extension} of a symmetric operator $\dA$ if $\bA=\bA^*$ and $\bA
\supset \dA$.
Let $\bA$ be a self-adjoint
bi-extension of $\dA$ and let the operator $\hat A$ in $\cH$ be defined as follows:
\[
\dom(\hat A)=\{f\in\cH_+:\bA f\in\cH\}, \quad \hat A=\bA\uphar\dom(\hat A).
\]
The operator $\hat A$ is called a \textit{quasi-kernel} of a self-adjoint bi-extension $\bA$ (see \cite[Section 2.1]{ABT}, \cite{TSh1}).
According to the von Neumann Theorem (see \cite[Theorem 1.3.1]{ABT}) the domain of $\wh A$, a self-adjoint extension of $\dA$,  can be expressed as
\begin{equation}\label{DOMHAT}
\dom(\hat A)=\dom(\dA)\oplus(I+U)\sN_{i},
\end{equation}
where von Neumann's parameter $U$ is both a $(\cdot)$-isometric as well as $(+)$-isometric operator from $\sN_i$ into $\sN_{-i}$ and $$\sN_{\pm i}=\Ker (\dA^*\mp i I)$$ are the deficiency subspaces of $\dA$.
 A self-adjoint bi-extension $\bA$ of a symmetric operator $\dA$ is called \textit{t-self-adjoint} (see \cite[Definition 4.3.1]{ABT}) if its quasi-kernel $\hat A$ is a self-adjoint operator in $\calH$.
An operator $\bA\in[\calH_+,\calH_-]$  is called a \textit{quasi-self-adjoint bi-extension} of an operator $T$ if $\bA\supset T\supset \dA$ and $\bA^*\supset T^*\supset\dA.$

We will be mostly interested in the following type of quasi-self-adjoint bi-extensions.
Let $T$ be a quasi-self-adjoint extension of $\dA$ with nonempty resolvent set $\rho(T)$. A quasi-self-adjoint bi-extension $\bA$ of an operator $T$ is called (see \cite[Definition 3.3.5]{ABT}) a \textit{($*$)-extension } of $T$ if $\RE\bA$ is a
t-self-adjoint bi-extension of $\dA$.
In what follows we assume that $\dA$ has deficiency indices $(1,1)$. In this case it is known \cite{ABT} that every  quasi-self-adjoint extension $T$ of $\dA$  admits $(*)$-extensions.
The description of all $(*)$-extensions via the Riesz-Berezansky   operator $\calR$ can be found in \cite[Section 4.3]{ABT}.

Recall that a linear operator $T$ in a Hilbert space $\calH$ is called \textbf{accretive} \cite{Ka} if $\RE(Tf,f)\ge 0$ for all $f\in \dom(T)$.  We call an accretive operator $T$
\textbf{$\alpha$-sectorial} \cite{Ka} if there exists a value of $\alpha\in(0,\pi/2)$ such that
\begin{equation}\label{e8-29}
    (\cot\alpha) |\IM(Tf,f)|\le\,\RE(Tf,f),\qquad f\in\dom(T).
\end{equation}
We say that the angle of sectoriality $\alpha$ is \textbf{exact} for an $\alpha$-sectorial
operator $T$ if $$\tan\alpha=\sup_{f\in\dom(T)}\frac{|\IM(Tf,f)|}{\RE(Tf,f)}.$$
An accretive operator is called \textbf{extremal accretive} if it is not $\alpha$-sectorial for any $\alpha\in(0,\pi/2)$. In what follows, when we say that an accretive operator is $\alpha$-sectorial, we mean that $\alpha$ is its exact angle of sectoriality unless otherwise is specified.

The following definition is a ``lite" version of the definition of L-system given for a scattering L-system with
 one-dimensional input-output space. It is tailored for the case when the symmetric operator of an L-system has deficiency indices $(1,1)$. The general definition of an L-system can be found in \cite[Definition 6.3.4]{ABT}.
\begin{definition} 
 An
array
\begin{equation}\label{e6-3-2}
\Theta= \begin{pmatrix} \bA&K&\ 1\cr \calH_+ \subset \calH \subset
\calH_-& &\dC\cr \end{pmatrix}
\end{equation}
 is called an \textbf{{L-system}}   if:
\begin{enumerate}
\item[(1)] {$T$ is a dissipative ($Im(Tf,f)\ge0$, $f\in\dom(T)$) quasi-self-adjoint extension of a symmetric operator $\dA$ with deficiency indices $(1,1)$};
\item[(2)] {$\mathbb  A$ is a   ($\ast $)-extension of  $T$};
\item[(3)] $\IM\bA= KK^*$, where $K\in [\dC,\calH_-]$ and $K^*\in [\calH_+,\dC]$.
\end{enumerate}
\end{definition}
  Operators $T$ and $\bA$ are called the \textit{main and state-space operators respectively} of the system $\Theta$, and $K$ is  the \textit{channel operator}.
It is easy to see that the operator $\bA$ of the system  \eqref{e6-3-2}  can be chosen in such a way  that $\IM\bA=(\cdot,\chi)\chi$, $\chi\in\calH_-$ and $K c=c\cdot\chi$, $c\in\dC$.
  A system $\Theta$ in \eqref{e6-3-2} is called \textit{minimal} if the operator $\dA$ is a prime operator in $\calH$, i.e., there exists no non-trivial reducing invariant subspace of $\calH$ on which it induces a self-adjoint operator. Notice that minimal L-systems of the form \eqref{e6-3-2} with  one-dimensional input-output space were also considered in \cite{BMkT}.

We  associate with an L-system $\Theta$ the  function
\begin{equation}\label{e6-3-3}
W_\Theta (z)=I-2iK^\ast (\mathbb  A-zI)^{-1}K,\quad z\in \rho (T),
\end{equation}
 which is called the \textbf{transfer  function} of the L-system $\Theta$. We also consider another  function related to an L-system $ \Theta $ called the \textbf{impedance function} and given by the formula
\begin{equation}\label{e6-3-5}
V_\Theta (z) = K^\ast (\RE\bA - zI)^{-1} K.
\end{equation}

 The transfer function $W_\Theta (z)$ of the L-system $\Theta $ and function $V_\Theta (z)$ of the form (\ref{e6-3-5}) are connected by the following relations valid for $\IM z\ne0$, $z\in\rho(T)$,
\begin{equation*}\label{e6-3-6}
\begin{aligned}
V_\Theta (z) &= i [W_\Theta (z) + I]^{-1} [W_\Theta (z) - I],\\
W_\Theta(z)&=(I+iV_\Theta(z))^{-1}(I-iV_\Theta(z)).
\end{aligned}
\end{equation*}
 The class of all Herglotz-Nevanlinna functions, that can be realized as impedance functions of L-systems, and connections with Weyl-Titchmarsh functions can be found in \cite{ABT}, \cite{BMkT},  \cite{DMTs},  \cite{GT} and references therein.
In particular it is shown there that any impedance function $V_\Theta(z)$ admits the  integral representation
\begin{equation}\label{e-60-nu}
V_\Theta(z)=Q+\int_\bbR \left(\frac{1}{\lambda-z}-\frac{\lambda}{1+\lambda^2}\right)d\sigma,
\end{equation}
where $Q$ is a real number and $\sigma$ is an  infinite Borel measure   such that
$$
\int_\bbR\frac{d\sigma(\lambda)}{1+\lambda^2}<\infty.
$$

\section{Donoghue classes and L-systems with one-dimensional input-output}\label{s3}

Suppose that $\dA$ is a closed prime 
densely defined symmetric operator with deficiency indices $(1,1)$. Assume also that $T \ne T^*$ is a  maximal dissipative extension of $\dot A$,
$$\Im(T f,f)\ge 0, \quad f\in \dom(T ).$$
Since $\dot A$ is symmetric, its dissipative extension $T$ is automatically quasi-self-adjoint \cite{ABT},
that  is,
$$
\dot A \subset T \subset \dA^*,
$$
and hence, (see \cite{BMkT})
\begin{equation}\label{parpar}
g_+-\kappa g_-\in \dom(T)\quad \text{for some } |\kappa|<1,
\end{equation}
where $g_\pm\in\sN_{\pm i}=\Ker (\dA^*\mp i I)$ and  $\|g_\pm\|=1$.
{Throughout this paper  $\kappa$ will be referred to as the \textbf{ von Neumann  parameter} of the operator $T$.}
The next lemma contains a characterization of sectorial operators in terms of the modulus of von Neumann's parameter.

\begin{lemma}\label{l-2}
If $T$ is an $\alpha$-sectorial operator with $\alpha\in(0,\pi/2)$, then its von Neumann's  parameter $\kappa$ cannot equal  zero.
\end{lemma}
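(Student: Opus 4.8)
The plan is to prove the contrapositive: assuming the von Neumann parameter $\kappa=0$, I will show that $T$ cannot be $\alpha$-sectorial for any $\alpha\in(0,\pi/2)$. By \eqref{parpar}, the condition $\kappa=0$ means that $g_+\in\dom(T)$, where $g_+\in\sN_i=\Ker(\dA^*-iI)$ with $\|g_+\|=1$. The key idea is to test the sectoriality inequality \eqref{e8-29} on this particular vector $g_+$ and derive a contradiction.

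First I would compute $(Tg_+,g_+)$. Since $g_+\in\dom(T)\subset\dom(\dA^*)$ and $T\subset\dA^*$, we have $Tg_+=\dA^* g_+=i g_+$, using that $g_+$ is an eigenvector of $\dA^*$ with eigenvalue $i$. Therefore
\begin{equation*}
(Tg_+,g_+)=(i g_+,g_+)=i\|g_+\|^2=i.
\end{equation*}
This gives $\RE(Tg_+,g_+)=0$ while $\IM(Tg_+,g_+)=1$. I would first note that the vanishing real part is consistent with $T$ being dissipative/accretive (the boundary of accretivity), but it immediately breaks sectoriality: if $T$ were $\alpha$-sectorial with some $\alpha\in(0,\pi/2)$, then \eqref{e8-29} applied at $f=g_+$ would read $(\cot\alpha)\cdot 1 \le 0$, which is impossible since $\cot\alpha>0$ for $\alpha\in(0,\pi/2)$. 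This contradiction shows that $T$ is not $\alpha$-sectorial for any such $\alpha$, completing the contrapositive and hence the lemma.

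The main point to verify carefully is the first step, namely that $\kappa=0$ genuinely forces $g_+\in\dom(T)$ with $Tg_+=\dA^*g_+=ig_+$. This follows from reading off \eqref{parpar}: the defining relation $g_+-\kappa g_-\in\dom(T)$ with $\kappa=0$ says precisely $g_+\in\dom(T)$, and the action of $T$ on this vector is inherited from $\dA^*$ because $T\subset\dA^*$. I expect the only genuine obstacle to be a notational one: ensuring that the normalization $\|g_+\|=1$ and the eigenvalue convention $\dA^* g_+=ig_+$ match the sign conventions in $\sN_{\pm i}=\Ker(\dA^*\mp iI)$ as stated in the excerpt, so that $\IM(Tg_+,g_+)$ comes out strictly positive rather than zero or negative. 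Once that sign is fixed, the contradiction with the strict positivity of $\cot\alpha$ is immediate, and no further estimates are needed.
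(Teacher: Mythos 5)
Your proposal is correct and follows essentially the same route as the paper: both test the sectoriality inequality \eqref{e8-29} on the eigenvector of $T$ corresponding to the eigenvalue $i$ (your $g_+$, the paper's $x$ with $Tx=ix$), obtain $\RE(Tf,f)=0$ while $\IM(Tf,f)=\|f\|^2>0$, and conclude that $\cot\alpha$ would have to vanish. The only difference is presentational (contrapositive versus contradiction), plus your welcome explicit justification that $\kappa=0$ forces $g_+\in\dom(T)$ and $Tg_+=\dA^*g_+=ig_+$, a step the paper delegates to a citation.
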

\begin{proof}
Assume the contrary, let $T$ be an $\alpha$-sectorial operator in a Hilbert space $\calH$ with $\alpha\in(0,\pi/2)$ and  the von Neumann parameter $\kappa=0$. Then \eqref{parpar} implies (see \cite{ABT}) that there exists a non-zero vector $x\in\dom(T)$ such that $Tx=ix$, $x\ne0$. Moreover,
$$
(Tx,x)=(ix,x)=i\|x\|^2 \quad\textrm{ and }\quad (x,Tx)=(x,ix)=-i\|x\|^2.
$$
Then
$$
\RE(Tx,x)=\frac{(Tx,x)+(x,Tx)}{2}=0
$$
and
$$
 \IM(Tx,x)=\frac{(Tx,x)-(x,Tx)}{2i}=\frac{i\|x\|^2+i\|x\|^2}{2i}=\|x\|^2.
$$
But  $T$ is $\alpha$-sectorial and hence \eqref{e8-29} takes place implying
$$
0\le(\cot\alpha) |\IM(Tx,x)|\le\,\RE(Tx,x)=0.
$$
This yields $0\le(\cot\alpha)\|x\|^2\le0$ or $(\cot\alpha)\|x\|^2=0$. Since neither $\cot\alpha$ nor $\|x\|$ can equal zero, we reached a contradiction. Therefore, $\kappa\ne0$.
\end{proof}

Recall that  Donoghue \cite{D}  introduced a concept of the Herglotz-Nevanlinna function $M_{(\dot A, A)}(z)$ associated with the pair $(\dot A, A)$ by
$$M_{(\dot A, A)}(z)=
((Az+I)(A-zI)^{-1}g_+,g_+), \quad z\in \bbC_+,
$$
$$g_+\in \Ker( \dA^*-iI),\quad \|g_+\|=1,$$
where $\dot A $ is a symmetric operator with deficiency indices $(1,1)$, and $A$ is its self-adjoint extension.
Let $\sN$ (see \cite{BMkT-3}) be a class of all Herglotz-Nevanlinna functions $M(z)$ that admit the representation
\begin{equation}\label{hernev-0}
M(z)=\int_\bbR \left(\frac{1}{\lambda-z}-\frac{\lambda}{1+\lambda^2}\right)d\sigma,
\end{equation}
where $\sigma$ is an  infinite Borel measure with
$$
\int_\bbR\frac{d\sigma(\lambda)}{1+\lambda^2}<\infty.
$$
 Following our earlier developments in \cite{BMkT}, \cite{BMkT-3}, \cite{MT10}, \cite{MT2021}   denote by $\sM$ the \textbf{Donoghue class} of all analytic mappings $M(z)$ from $\bbC_+$ into itself  that admits the representation
 \eqref{hernev-0}    and has a property
$$
\int_\bbR\frac{d\sigma(\lambda)}{1+\lambda^2}=1\,,\quad\text{equivalently,}\quad M(i)=i.
$$

It is known  \cite{D}, \cite{GMT97},  \cite{GT}, \cite {MT-S} that $M(z)\in \mM$ if and only if $M(z)$ can be realized  as the Weyl-Titchmarsh function $M_{(\dot A, A)}(z)$ associated with the pair $(\dot A, A)$. Furthermore, we say (see \cite{BMkT}) that a function $M(z)\in\sN$  belongs to the \textbf{generalized Donoghue class} $\sM_\kappa$, ($0\le\kappa<1$) if  in the representation \eqref{hernev-0}
\begin{equation}\label{e-38-kap}
\int_\bbR\frac{d\sigma(\lambda)}{1+\lambda^2}=\frac{1-\kappa}{1+\kappa}\,,\quad\text{equivalently,}\quad M(i)=i\,\frac{1-\kappa}{1+\kappa}.
\end{equation}
Similarly  (see \cite{BMkT-2}), a function $M(z)\in\sN$  belongs to the  \textbf{generalized Donoghue class} $\sM_\kappa^{-1}$ if in the representation \eqref{hernev-0}
\begin{equation}\label{e-39-kap}
\int_\bbR\frac{d\sigma(\lambda)}{1+\lambda^2}=\frac{1+\kappa}{1-\kappa}\,,\quad\text{equivalently,}\quad M(i)=i\,\frac{1+\kappa}{1-\kappa}.
\end{equation}
Clearly, when $\kappa=0$ the generalized Donoghue classes $\sM_\kappa$ and $\sM_\kappa^{-1}$ coincide with the Donoghue class $\sM$, that is $\sM_0=\sM_0^{-1}=\sM$. If $M(z)$ is an arbitrary function from $\sN$ with a normalization condition
\begin{equation}\label{e-66-L}
\int_\bbR\frac{d\sigma(\lambda)}{1+\lambda^2}=a,
\end{equation}
for some $a>0$, then it is easy to see that $M(z)\in\sM$ if and only if $a=1$. Also, if $a<1$, then $M(z)\in \sM_\kappa$ with
\begin{equation}\label{e-45-kappa-1}
\kappa=\frac{1-a}{1+a},
\end{equation}
and if $a>1$, then $M(z)\in \sM_\kappa^{-1}$ with
\begin{equation}\label{e-45-kappa-2}
\kappa=\frac{a-1}{1+a}.
 \end{equation}

\begin{hypothesis}\label{setup} Suppose that $\whA \ne\whA^*$  is  a maximal dissipative extension of  a symmetric operator $\dot A$  with deficiency indices $(1,1)$. Assume, in addition, that $A$ is a  self-adjoint extension of $\dot A$. Suppose,  that the deficiency elements $g_\pm\in \Ker (\dA^*\mp iI)$ are normalized, $\|g_\pm\|=1$, and chosen in such a way that
\begin{equation}\label{ddoomm14}g_+- g_-\in \dom ( A)\,\,\,\text{and}\,\,\,
g_+-\kappa g_-\in \dom (\whA )\,\,\,\text{for some }
\,\,\,|\kappa|<1.
\end{equation}
\end{hypothesis}
It is known \cite{MT-S} that if $\kappa=0$, then  quasi-self-adjoint extension $\whA $ coincides with the restriction of the adjoint operator $\dot A^*$ on
$$
\dom(\whA )=\dom(\dot A)\dot + \Ker (\dA^*-iI).
$$
Similar to Hypothesis \ref{setup} it is convenient to adopt ``anti-Hypothesis".
\begin{hypothesis}\label{setup-1} Suppose
that $\whA \ne\whA^*$  is  a maximal
dissipative extension of  a symmetric operator $\dot A$
 with deficiency indices $(1,1)$. Assume, in addition, that
$A$ is a  self-adjoint extension of $\dot A$. Suppose,  that the
deficiency elements $g_\pm\in \Ker (\dA^*\mp iI)$ are
normalized, $\|g_\pm\|=1$, and chosen in such a way that
\begin{equation}\label{ddoomm14-1}g_++ g_-\in \dom ( A)\,\,\,\text{and}\,\,\,
g_+-\kappa g_-\in \dom (\whA )\,\,\,\text{for some }
\,\,\,|\kappa|<1.
\end{equation}
\end{hypothesis}
\noindent
\begin{remark}\label{r-12}
Without loss of generality, in what follows we assume that $\kappa$ is real and $0\le\kappa<1$: if $\kappa=|\kappa|e^{i\theta}$,
change (the basis) $g_-$ to $e^{i\theta}g_-$ in the deficiency subspace  $\Ker (\dA^*+ i I)$.
\end{remark}
This remark means the following: let
\begin{equation}\label{e-62}
\Theta= \begin{pmatrix} \bA&K&\ 1\cr \calH_+ \subset \calH \subset
\calH_-& &\dC\cr \end{pmatrix}
\end{equation}
be a minimal L-system  with one-dimensional input-output space $\dC$. 
If the main operator $T$ of $\Theta$ is parameterized with a \textit{complex} von Neumann's parameter $\kappa$ that corresponds to a chosen normalized pair of deficiency vectors $g_+$ and $g_-$, then we can change the deficiency basis as explained in Remark \ref{r-12} and represent $T$ using real value  $|\kappa|$ with respect to the new deficiency basis. This procedure will change the parameter $U$ of the quasi-kernel $\hat A$ of $\RE\bA$ in  \eqref{DOMHAT} and ultimately the way $\bA$ is described.  \textit{Thus, for the remainder of this paper (unless otherwise is specified) we will consider L-systems  \eqref{e-62} such that $\kappa$ is real and $0\le\kappa<1$.}

\begin{definition}
We say that an L-system $\Theta$ of the form \eqref{e-62}  \textit{satisfies Hypothesis} \ref{setup} (or \ref{setup-1}) if its main operator $T$ and the quasi-kernel $\hat A$ of $\RE\bA$ satisfy the conditions of Hypothesis \ref{setup} (or \ref{setup-1}) for a fixed set of deficiency vectors of the symmetric operator $\dA$.
\end{definition}

Let $\Theta$ be a minimal L-system of the form \eqref{e-62} that satisfies  Hypothesis \ref{setup}. It is shown in \cite{BMkT} that  the impedance function $V_\Theta(z)$ can be represented as
\begin{equation}\label{e-imp-m}
    V_{\Theta}(z)=\left(\frac{1-\kappa}{1+\kappa}\right)V_{\Theta_0}(z),
\end{equation}
where $V_{\Theta_0}(z)$ is  the impedance function of an L-system $\Theta_0$ with the same set of conditions but with $\kappa_0=0$, where $\kappa_0$ is the von Neumann parameter of the main operator $T_0$ of $\Theta_0$.

Let $\Theta_1$ and $\Theta_2$ be two minimal L-system of the form \eqref{e-62}  whose components satisfy  Hypothesis \ref{setup} and Hypothesis \ref{setup-1}, respectively. Then it was proved in \cite[Lemma 5.1]{BMkT-2} that the impedance functions $V_{\Theta_1}(z)$ and $V_{\Theta_2}(z)$ admit the integral representation
\begin{equation}\label{e-60-nu-1}
V_{\Theta_{k}}(z)=\int_\bbR \left(\frac{1}{t-z}-\frac{t}{1+t^2}\right )d\sigma_{k}(t),\quad k=1,2.
\end{equation}

Now let us consider a minimal L-system $\Theta$ of the form \eqref{e-62} that satisfies  Hypothesis \ref{setup}. Let also
\begin{equation}\label{e-62-alpha}
\Theta_\alpha= \begin{pmatrix} \bA_\alpha&K_\alpha&\ 1\cr \calH_+ \subset \calH \subset
\calH_-& &\dC\cr \end{pmatrix},\quad \alpha\in[0,\pi),
\end{equation}
 be a one parametric family of L-systems such that
 \begin{equation}\label{e-63-alpha}
    W_{\Theta_\alpha}(z)=W_\Theta(z)\cdot (-e^{2i\alpha}),\quad \alpha\in[0,\pi).
 \end{equation}
The existence and structure of $\Theta_\alpha$ were described in details in \cite[Section 8.3]{ABT}. In particular, it was shown that the L-system $\Theta$ and $\Theta_\alpha$ share the same main operator $T$ and that
\begin{equation}\label{e-64-alpha}
    V_{\Theta_\alpha}(z)=\frac{\cos\alpha+(\sin\alpha) V_\Theta(z)}{\sin\alpha-(\cos\alpha) V_\Theta(z)}.
\end{equation}

Let $\Theta$ be a minimal L-system $\Theta$ of the form \eqref{e-62} that satisfies Hypothesis \ref{setup}. Also let $\Theta_{\alpha}$ be a one parametric family of L-systems given by \eqref{e-62-alpha}, \eqref{e-63-alpha}.
It was shown in \cite[Theorem 5.2]{BMkT-2} that in this case the impedance function $V_{\Theta_{\alpha}}(z)$ has an integral representation
$$
V_{\Theta_{\alpha}}(z)=\int_\bbR \left(\frac{1}{t-z}-\frac{t}{1+t^2}\right )d\sigma_{\alpha}(t)
$$
if and only if $\alpha=0$ or $\alpha=\pi/2$.

The next result describes the relationship between two L-systems of the form \eqref{e-62} that comply with different hypotheses.

Let
\begin{equation}\label{e-62-1}
\Theta_1= \begin{pmatrix} \bA_1&K_1&\ 1\cr \calH_+ \subset \calH \subset
\calH_-& &\dC\cr \end{pmatrix}
\end{equation}
be a minimal L-system whose main operator $T$ and the quasi-kernel $\hat A_1$ of $\RE\bA_1$ satisfy the conditions of Hypothesis \ref{setup} and let
\begin{equation}\label{e-62-2}
\Theta_2= \begin{pmatrix} \bA_2&K_2&\ 1\cr \calH_+ \subset \calH \subset
\calH_-& &\dC\cr \end{pmatrix}
\end{equation}
be another minimal L-system with the same  operators $\dA$ and $T$ as $\Theta_1$ but with the quasi-kernel $\hat A_2$ of $\RE\bA_2$ that satisfies  Hypothesis \ref{setup-1}. It was shown in \cite[Theorem 5.3]{BMkT-2} that
\begin{equation}\label{e-55-1}
    W_{\Theta_1}(z)=-W_{\Theta_2}(z),\quad z\in\dC_+\cap\rho(T)
\end{equation}
and
\begin{equation}\label{e-56-1}
    V_{\Theta_1}(z)=-\frac{1}{V_{\Theta_2}(z)},\quad z\in\dC_+\cap\rho(T).
\end{equation}

\section{L-systems with Schr\"odinger operator $T_h$ and their impedance functions}\label{s4}

Let $\calH=L_2(\ell,+\infty)$ and $l(y)=-y^{\prime\prime}+q(x)y$, where $q$ is a real locally summable function. Suppose that the minimal symmetric operator
\begin{equation}
\label{128}
 \left\{ \begin{array}{l}
 \dA y=-y^{\prime\prime}+q(x)y \\
 y(\ell)=y^{\prime}(\ell)=0 \\
 \end{array} \right.
\end{equation}
has deficiency indices (1,1). Let $D^*$ be the set of functions locally absolutely continuous together with their first derivatives such that $l(y) \in L_2(\ell,+\infty)$. Consider $\calH_+=\dom(\dA^*)=D^*$ with the scalar product
$$(y,z)_+=\int_{a}^{\infty}\left(y(x)\overline{z(x)}+l(y)\overline{l(z)}
\right)dx,\;\; y,\;z \in D^*.$$ Let $\calH_+ \subset L_2(\ell,+\infty) \subset \calH_-$ be the corresponding triplet of Hilbert spaces. Consider the operators (cf. \cite{PavDrog})
\begin{equation}\label{131}
 \left\{ \begin{array}{l}
 T_hy=l(y)=-y^{\prime\prime}+q(x)y \\
 hy(\ell)-y^{\prime}(\ell)=0 \\
 \end{array} \right.
           ,\quad  \left\{ \begin{array}{l}
 T^*_hy=l(y)=-y^{\prime\prime}+q(x)y \\
 \overline{h}y(\ell)-y^{\prime}(\ell)=0 \\
 \end{array} \right..
\end{equation}

Let  $\dA$ be a symmetric operator  of the form \eqref{128} with deficiency indices (1,1), generated by the differential expression $l(y)=-y^{\prime\prime}+q(x)y$. Let also $\varphi_k(x,\lambda) (k=1,2)$ be the solutions of the following Cauchy problems:
$$\left\{ \begin{array}{l}
 l(\varphi_1)=\lambda \varphi_1 \\
 \varphi_1(\ell,\lambda)=0 \\
 \varphi'_1(\ell,\lambda)=1 \\
 \end{array} \right., \qquad
\left\{ \begin{array}{l}
 l(\varphi_2)=\lambda \varphi_2 \\
 \varphi_2(\ell,\lambda)=-1 \\
 \varphi'_2(\ell,\lambda)=0 \\
 \end{array} \right.. $$
It is well known \cite{Na68} that there exists a Weyl function $m_\infty(\lambda)$  such that
$$\varphi(x,\lambda)=\varphi_2(x,\lambda)+m_\infty(\lambda)
\varphi_1(x,\lambda)$$ belongs to $L_2(\ell,+\infty)$.

Now we shall construct an L-system associated with a non-self-adjoint Schr\"odin\-ger operator $T_h$.  It  was shown in \cite{ABT}, \cite{ArTs0} that  the set of all ($*$)-extensions of the non-self-adjoint Schr\"odinger operator $T_h$ of the form \eqref{131} in $L_2(\ell,+\infty)$ can be represented as
\begin{equation}\label{137}
\begin{split}
&\bA_{\mu, h}\, y=-y^{\prime\prime}+q(x)y-\frac {1}{\mu-h}\,[y^{\prime}(\ell)-
hy(\ell)]\,[\mu \delta (x-\ell)+\delta^{\prime}(x-\ell)], \\
&\bA^*_{\mu, h}\, y=-y^{\prime\prime}+q(x)y-\frac {1}{\mu-\overline h}\,
[y^{\prime}(\ell)-\overline hy(\ell)]\,[\mu \delta
(x-\ell)+\delta^{\prime}(x-\ell)].
\end{split}
\end{equation}
Moreover, the formulas \eqref{137} establish a one-to-one correspondence between the set of all ($*$)-extensions of the Schr\"odinger operator $T_h$ of the form \eqref{131} and all values $\mu \in \dR\cup\{\infty\}$. One can easily check that the ($*$)-extension $\bA_{\mu,h}$ in \eqref{137} of the non-self-adjoint dissipative Schr\"odinger operator $T_h$, ($\IM h>0$) of the form \eqref{131} satisfies the condition
\begin{equation*}\label{145}
\IM\bA_{\mu, h}=\frac{\bA_{\mu, h} - \bA^*_{\mu, h}}{2i}=(\cdot,g)g,
\end{equation*}
where
\begin{equation}\label{146}
g=\frac{(\IM h)^{\frac{1}{2}}}{|\mu - h|}\,[\mu\delta(x-\ell)+\delta^{\prime}(x-\ell)],
\end{equation}
$\delta(x-\ell)$ and $\delta^{\prime}(x-\ell)$ are the delta-function and
its derivative at the point $\ell$, respectively. Furthermore,
\begin{equation*}\label{147}
(y,g)=\frac{(\IM h)^{\frac{1}{2}}}{|\mu - h|}\ [\mu y(\ell)
-y^{\prime}(\ell)],
\end{equation*}
$y\in \calH_+$, $g\in \calH_-$, where $\calH_+ \subset L_2(\ell,+\infty) \subset \calH_-$ is the triplet of Hilbert spaces introduced above.

Let $y\in\dom(T_h)$, then $y^{\prime}(\ell)=hy(\ell)$ and
$$
\begin{aligned}
\IM\bA_{\mu, h}\, y&=\IM T_h y=(y,g)g=\frac{(\IM h)^{\frac{1}{2}}}{|\mu - h|}\ [\mu y(\ell)-y^{\prime}(\ell)]g\\&=\frac{(\IM h)^{\frac{1}{2}}}{|\mu - h|}\ [\mu y(\ell)-hy(\ell)]g\\
&=\frac{(\IM h)^{\frac{1}{2}}}{|\mu - h|}\ (\mu-h)y(\ell)\cdot\frac{(\IM h)^{\frac{1}{2}}}{|\mu - h|}\,[\mu\delta(x-\ell)+\delta^{\prime}(x-\ell)]\\
&=y(\ell)\frac{(\IM h)(\mu-h)}{|\mu - h|^2}\,[\mu\delta(x-\ell)+\delta^{\prime}(x-\ell)].
\end{aligned}
$$
Consequently,
\begin{equation}\label{e-25}
    \IM (T_h y,y)=y(\ell)\frac{(\IM h)(\mu-h)}{|\mu - h|^2}\cdot (\mu-\bar h)\overline{y(\ell)}=(\IM h)|y(\ell)|^2.
\end{equation}
Having in mind \eqref{e-25} we will call  $\IM h$ a \textbf{coefficient of dissipation} and denote it by $\calD=\IM h$.

It was also shown in \cite{ABT} that the quasi-kernel $\hat A_\xi$ of $\RE\bA_{\mu, h}$ is given by
\begin{equation}\label{e-31}
  \left\{ \begin{array}{l}
 \hat A_\xi y=-y^{\prime\prime}+q(x)y \\
 y^{\prime}(\ell)=\xi y(\ell) \\
 \end{array} \right.,\quad \textrm{where} \quad  \xi=\frac{\mu\RE h-|h|^2}{\mu-\RE h}.
\end{equation}
Let $E=\dC$, $K{c}=cg \;(c\in \dC)$. It is clear that
\begin{equation}\label{148}
K^* y=(y,g),\quad  y\in \calH_+,
\end{equation}
and $\IM\bA_{\mu, h}=KK^*.$ Therefore, the array
\begin{equation}\label{149}
\Theta_{\mu, h}= \begin{pmatrix} \bA_{\mu, h}&K&1\cr \calH_+ \subset
L_2(\ell,+\infty) \subset \calH_-& &\dC\cr \end{pmatrix},
\end{equation}
is an L-system  with the main operator $\bA_{\mu, h}$ of the form \eqref{137} with the channel operator $K$ given by \eqref{148}.
We will say that an L-system $\Theta_{\mu, h}$ of the form \eqref{149} has the  coefficient of dissipation $\calD$ if its main operator $T_h$ has the  coefficient of dissipation $\calD=\IM h$.
It was shown in
 \cite{ABT}, \cite{ArTs0} that the transfer and impedance functions of $\Theta_{\mu, h}$ can be evaluated as
\begin{equation}\label{150}
W_{\Theta_{\mu, h}}(z)= \frac{\mu -h}{\mu - \overline h}\,\,
\frac{m_\infty(z)+ \overline h}{m_\infty(z)+h},
\end{equation}
and
\begin{equation}\label{1501}
V_{\Theta_{\mu, h}}(z)=\frac{\left(m_\infty(z)+\mu\right)\IM h}{\left(\mu-\RE h\right)m_\infty(z)+\mu\RE h-|h|^2}.
\end{equation}

Suppose that the main operator $T_h$ of the L-system \eqref{149} has the von Neumann representation \eqref{parpar} with the parameter $\kappa$ related to some normalized deficiency basis $g_\pm$. 
It was shown in \cite{BMkT} that if the point $z=i$ belongs to the resolvent set $\rho(T_h)$, then in this case (by \eqref{150})
\begin{equation}\label{e-37}
|\kappa|=\frac{1}{|W_{\Theta_{\mu, h}}(i)|}=\left|\frac{\mu - \overline h}{\mu -h}\cdot\frac{m_\infty(i)+h}{m_\infty(i)+ \overline h}\right|=\left|\frac{m_\infty(i)+h}{m_\infty(i)+ \overline h}\right|.
\end{equation}
Taking into account that $\IM h>0$ and $\IM m_\infty(i)<0$ one can easily see that $|\kappa|<1$ in \eqref{e-37}.
\begin{remark}\label{r-7}
Now suppose $h\ne-m_\infty(i)$ and hence $\kappa\ne0$. If $\Theta_{\mu, h}$ satisfies  Hypothesis \ref{setup}, then
\begin{equation}\label{e-38-new}
\kappa=\frac{1}{W_{\Theta_{\mu, h}}(i)}=\frac{\mu - \overline h}{\mu -h}\cdot\frac{m_\infty(i)+h}{m_\infty(i)+ \overline h}
\end{equation}
is real.

We can reverse the logic and try to find a value of parameter $\mu$ for which $\kappa$ in \eqref{e-38-new} is real and thus $\kappa=|\kappa|$. This can be achieved if we set
\begin{equation}\label{e-40}
    \frac{\mu - \overline h}{\mu -h}=e^{-i\alpha},
\end{equation}
where $\alpha\in(-\pi,\pi]$ is such that
$$
\frac{m_\infty(i)+h}{m_\infty(i)+ \overline h}=\left|\frac{m_\infty(i)+h}{m_\infty(i)+ \overline h}\right|e^{i\alpha}.
$$
Clearly, such a choice of the angle $\alpha$ will make $\kappa$ real and $0<\kappa<1$. That is,
\begin{equation}\label{e-41}
    \kappa=\left|\frac{m_\infty(i)+h}{m_\infty(i)+ \overline h}\right|.
\end{equation}
Solving \eqref{e-41} for $\mu$ yields
\begin{equation}\label{e-42}
    \mu=\mu_1=\frac{e^{i\alpha}\bar h-h}{e^{i\alpha}-1},\quad{\textrm{where}}\quad \alpha=\textrm{Arg}\left(\frac{m_\infty(i)+h}{m_\infty(i)+ \overline h} \right).
\end{equation}
Therefore, we can describe an L-system
\begin{equation}\label{e-43}
\Theta_{\mu_1,h}= \begin{pmatrix} \bA_{\mu_1,h}&K_1&1\cr \calH_+ \subset
L_2(\ell,+\infty) \subset \calH_-& &\dC\cr \end{pmatrix},
\end{equation}
with main Schr\"odinger operator $T_h$ that satisfies Hypothesis \ref{setup} with $0<\kappa<1$. The state-space operator $\bA_{\mu_1,h}$ of this L-system will be uniquely  defined via \eqref{137} by the parameter $h$ of $T_h$ and parameter $\mu_1$ given by \eqref{e-42}.

Similarly we can give the description of an L-system
\begin{equation}\label{e-43-1}
\Theta_{\mu_2,h}= \begin{pmatrix} \bA_{\mu_2,h}&K_2&1\cr \calH_+ \subset
L_2(\ell,+\infty) \subset \calH_-& &\dC\cr \end{pmatrix},
\end{equation}
with the same main operator $T_h$ that satisfies the conditions of Hypothesis \ref{setup-1} with $0<\kappa<1$. Applying \eqref{e-55-1} yields
\begin{equation}\label{e-44}
    \mu=\mu_2=\frac{e^{i\alpha}\bar h+h}{e^{i\alpha}+1},\quad{\textrm{where}}\quad \alpha=\textrm{Arg}\left(\frac{m_\infty(i)+h}{m_\infty(i)+ \overline h} \right).
\end{equation}
The state-space operator $\bA_{\mu_2,h}$ from Hypothesis \ref{setup-1} satisfying L-system will be uniquely  defined via \eqref{137} by the parameter $h$ of $T_h$ and parameter $\mu_2$ given by \eqref{e-44}. As  shown in \cite{BMkT-2},
$$V_{\Theta_{\mu_1,h}}(z)\in\sM_\kappa\quad\textit{ and }\quad V_{\Theta_{\mu_2,h}}(z)\in\sM_\kappa^{-1}.$$
\end{remark}

We will rely on formula \eqref{e-37} to prove the following result.
\begin{theorem}\label{t-6}
Let $\Theta_{\mu, h}$ be a Schr\"odinger L-system \eqref{149} with the main operator $T_h$ of the form \eqref{131}. Then for any $\mu\in\dR\cup\{\infty\}$ the impedance function $V_{\Theta_{\mu, h}}(z)$ belongs to the class $\sM$ if and only if $h=-m_\infty(i)$.
\end{theorem}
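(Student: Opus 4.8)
The plan is to reduce the assertion ``$V_{\Theta_{\mu,h}}\in\sM$'' to a single normalization condition at the point $z=i$, and then to extract that condition from the explicit impedance formula \eqref{1501}.

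First I would record the pointwise criterion that, for any L-system impedance function (which is automatically Herglotz--Nevanlinna), $V_{\Theta_{\mu,h}}\in\sM$ if and only if $V_{\Theta_{\mu,h}}(i)=i$. Indeed, by \eqref{e-60-nu} the function admits a representation with a real number $Q$ and an infinite Borel measure $\sigma$, and the elementary identity $\frac{1}{\lambda-i}-\frac{\lambda}{1+\lambda^2}=\frac{i}{1+\lambda^2}$ gives
\[
V_{\Theta_{\mu,h}}(i)=Q+i\int_\bbR\frac{d\sigma(\lambda)}{1+\lambda^2}.
\]
Since $Q$ is real and the integral is a nonnegative real number, $V_{\Theta_{\mu,h}}(i)=i$ forces $Q=0$ and $\int_\bbR\frac{d\sigma}{1+\lambda^2}=1$, which is exactly the defining requirement of $\sM$ (the representation \eqref{hernev-0} together with $M(i)=i$); the converse is immediate. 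This disposes of the analytic side of the problem and turns the theorem into the computation of $V_{\Theta_{\mu,h}}(i)$.

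Next I would evaluate $V_{\Theta_{\mu,h}}(i)$ from \eqref{1501}. Writing $m=m_\infty(i)$ and clearing the denominator (which is nonzero, since the quasi-kernel of $\RE\bA_{\mu,h}$ is self-adjoint, so $V_{\Theta_{\mu,h}}$ is analytic and finite at $i$), the equation $V_{\Theta_{\mu,h}}(i)=i$ becomes
\[
(m+\mu)\,\IM h=i\big[(\mu-\RE h)\,m+\mu\,\RE h-|h|^2\big].
\]
Substituting $\IM h=\frac{h-\bar h}{2i}$, $\RE h=\frac{h+\bar h}{2}$, $|h|^2=h\bar h$ and simplifying should collapse this to the factored identity
\[
(h+m)(\mu-\bar h)=0.
\]
As $\IM h>0$ we have $\bar h\notin\dR$, so the factor $\mu-\bar h$ never vanishes for real $\mu$; hence the equation holds precisely when $h+m_\infty(i)=0$, that is $h=-m_\infty(i)$. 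The remaining value $\mu=\infty$ is handled by passing to the limit in \eqref{1501}, which again returns $h=-m_\infty(i)$.

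An equivalent route, matching the announced reliance on \eqref{e-37}, is to pass through the transfer function via $V_\Theta=i\,(W_\Theta+I)^{-1}(W_\Theta-I)$: then $V_{\Theta_{\mu,h}}(i)=i$ is equivalent to $W_{\Theta_{\mu,h}}(i)=\infty$, and by \eqref{150} the prefactor $\frac{\mu-h}{\mu-\bar h}$ is finite and nonzero for every $\mu\in\dR\cup\{\infty\}$, so the pole occurs exactly when $m_\infty(i)+h=0$; equivalently, by \eqref{e-37}, $|\kappa|=\big|\frac{m_\infty(i)+h}{m_\infty(i)+\bar h}\big|$ vanishes iff $h=-m_\infty(i)$. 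The point requiring care, and the main obstacle, is exactly this critical value: there $\kappa=0$, $i$ becomes an eigenvalue of $T_h$, and $W_{\Theta_{\mu,h}}$ has a genuine pole at $z=i$, so the $V$--$W$ relation is singular and must be read as a limit. This is why I would prefer the first route through \eqref{1501}, where $V_{\Theta_{\mu,h}}(i)$ is a bona fide finite value and the factorization is rigorous and reversible in both directions.
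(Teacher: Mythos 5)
Your proposal is correct, and it differs from the paper's proof in a substantive way, even though both hinge on the same two ingredients: the normalization criterion at $z=i$ and the explicit formula \eqref{1501}. The paper argues asymmetrically. For sufficiency it substitutes $h=-m_\infty(i)$ into \eqref{1501} and grinds through a chain of algebra to reach $V_{\Theta_{\mu,h}}(i)=i$; for necessity it invokes the external fact (cited from \cite{BMkT}) that $V_\Theta\in\sM$ forces the von Neumann parameter $\kappa$ of $T_h$ to vanish, and then reads $h=-m_\infty(i)$ off \eqref{e-37}. You instead (i) prove the criterion ``$V_{\Theta_{\mu,h}}\in\sM$ if and only if $V_{\Theta_{\mu,h}}(i)=i$'' directly from the representation \eqref{e-60-nu}, using the identity $\frac{1}{\lambda-i}-\frac{\lambda}{1+\lambda^2}=\frac{i}{1+\lambda^2}$ together with the fact that $Q$ is real and the mass is nonnegative, and (ii) settle both implications at once via the factorization: the difference of the two sides of the cleared equation $\left(m_\infty(i)+\mu\right)\IM h=i\left[(\mu-\RE h)\,m_\infty(i)+\mu\RE h-|h|^2\right]$, multiplied by $i$, is exactly $\left(h+m_\infty(i)\right)\left(\mu-\bar h\right)$ (I verified this expansion), and $\mu-\bar h\ne 0$ because $\mu$ is real while $\IM h>0$. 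This buys two things. First, the argument is reversible, so no separate necessity step or appeal to the $\kappa=0$ characterization is needed. Second, it sidesteps a genuine delicacy in the paper's necessity step: formula \eqref{e-37} is stated under the hypothesis $i\in\rho(T_h)$, which fails precisely when $\kappa=0$ (then $g_+\in\dom(T_h)$ and $i$ is an eigenvalue of $T_h$); your closing remark about the singular $V$--$W$ correspondence at the critical value identifies exactly this point, and your route through \eqref{1501}, where $V_{\Theta_{\mu,h}}(i)$ is a finite number, avoids it. The only caveat, which you do address, is that $\mu=\infty$ requires the limiting form $V_{\Theta_{\infty,h}}(z)=\IM h/\left(m_\infty(z)+\RE h\right)$ of \eqref{1501}, which again returns $h=-m_\infty(i)$.
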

\begin{proof}
It is well known (see \cite{BMkT}, \cite{BMkT-2}) that the impedance function $V_{\Theta_{\mu, h}}(z)$ belongs to the class $\sM$ if and only if it has the integral representation
\begin{equation}\label{e-31-new}
V_{\Theta_{\mu, h}}(z)=\int_\bbR \left
(\frac{1}{\lambda-z}-\frac{\lambda}{1+\lambda^2}\right)d\sigma(\lambda),\quad \int_\bbR\frac{d\sigma(\lambda)}{1+\lambda^2}=1.
\end{equation}
Suppose that the  Schr\"odinger L-system $\Theta_{\mu, h}$ contains the main operator $T_h$ such that $h=-m_\infty(i)$ and arbitrary $\mu\in\dR\cup\{\infty\}$. Then \eqref{1501} yields
$$
\begin{aligned}
    V_{\Theta_{\mu, h}}&(i)=\frac{-\left(m_\infty(z)+\mu\right)\IM m_\infty(i)}{\left(\mu+\RE m_\infty(i)\right)m_\infty(i)-\mu\RE m_\infty(i)-|m_\infty(i)|^2}\\
    &=\frac{-\left(m_\infty(z)+\mu\right)\IM m_\infty(i)}{\mu\left(\RE m_\infty(i)+i\IM m_\infty(i)\right)+\RE m_\infty(i)\cdot  m_\infty(i)-\mu\RE m_\infty(i)-|m_\infty(i)|^2}\\
    &=\frac{-\left(m_\infty(z)+\mu\right)\IM m_\infty(i)}{i\mu\IM m_\infty(i)+\RE m_\infty(i)\cdot  m_\infty(i)-|m_\infty(i)|^2}\\
    &=\frac{-\left(m_\infty(z)+\mu\right)\IM m_\infty(i)}{i\mu\IM m_\infty(i)+\left(\RE m_\infty(i)-\overline{m_\infty(i)}\right)m_\infty(i)}\\
    &=\frac{-\left(m_\infty(z)+\mu\right)\IM m_\infty(i)}{i\mu\IM m_\infty(i)+\left(\RE m_\infty(i)-\RE m_\infty(i)+i\IM m_\infty(i)\right)m_\infty(i)}\\
    &=\frac{-\left(m_\infty(z)+\mu\right)\IM m_\infty(i)}{i\mu\IM m_\infty(i)\left(\mu+ m_\infty(i)\right)}=-\frac{1}{i}=i.
\end{aligned}
$$
Therefore, $V_{\Theta_{\mu, h}}(z)$ admits  the integral representation \eqref{e-31-new} and hence belongs to the class $\sM$.

Conversely, let  $V_{\Theta_{\mu, h}}(z)$ belong to the class $\sM$ and hence has the integral representation \eqref{e-31-new}. Then $V_{\Theta_{\mu, h}}(z)\in\sM$ and consequently (see \cite{BMkT}) $\kappa=0$, where $\kappa$ is the von Neumann parameter of $T_h$. In this case \eqref{e-37} implies that  $h=-m_\infty(i)$.
\end{proof}
Clearly, it follows from \eqref{e-37} that  $\kappa=0$ if and only if $h=-m_\infty(i)$. Therefore,
\begin{equation}\label{e-38}
\left\{ \begin{array}{l}
 T_h y=-y^{\prime\prime}+q(x)y \\
 y^{\prime}(\ell)=h y(\ell) \\
 \end{array} \right.\qquad\textrm{has\;  $\kappa=0$ if and only if }\; h=-m_\infty(i).
\end{equation}
The state-space operator $\bA_{\mu,h}$ of the L-system \eqref{149} (and its adjoint $\bA^*_{\mu,h}$) in this case will depend on the parameter $\mu$ only and take the form
\begin{equation}\label{137-1}
\begin{split}
&\bA_{\mu, -m_\infty(i)}\, y=-y^{\prime\prime}+q(x)y-\frac {y^{\prime}(\ell)+m_\infty(i)y(\ell)}{\mu+m_\infty(i)}[\mu \delta (x-\ell)+\delta^{\prime}(x-\ell)], \\
&\bA^*_{\mu,-m_\infty(i)}\, y=-y^{\prime\prime}+q(x)y-\frac {y^{\prime}(\ell)+m_\infty(-i)y(\ell)}{\mu+m_\infty(-i)}[\mu \delta(x-\ell)+\delta^{\prime}(x-\ell)].
\end{split}
\end{equation}
It can be checked directly that in this case the impedance function $V_{\Theta_{\mu,h}}(\lambda)$ belongs to the class $\sM$ (see also \cite{BMkT}).

The following theorem is  similar to Theorem \ref{t-6} result for the class $\sM_\kappa$.
\begin{theorem}\label{t-7}
Let $0<a<1$ and $\kappa=\frac{1-a}{1+a}$. Also let $\Theta_{\mu, h}$ be a Schr\"odinger L-system \eqref{149} with the main operator $T_h$ of the form \eqref{131} that has the modulus of von Neumann's parameter  $\kappa$. Then the impedance function $V_{\Theta_{\mu, h}}(z)$  belongs to the class $\sM_\kappa$  if and only if either
\begin{equation}\label{e-36-h}
    h=-\RE m_\infty(i)-\left(\frac{i}{a}\right)\IM m_\infty(i),
\end{equation}
or
\begin{equation}\label{e-37-h}
    h=\frac{a^2d^2\mu-d^2(c+\mu)-c(c+\mu)^2}{a^2d^2+(c+\mu)^2}-i\frac{ad^3+ad(c+\mu)^2}{a^2d^2+(c+\mu)^2},
\end{equation}
where $c=\RE m_\infty(i)$ and $d=\IM m_\infty(i)$.
\end{theorem}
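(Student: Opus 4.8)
The plan is to reduce membership in $\sM_\kappa$ to a single scalar equation. First I would record that, by \eqref{e-38-kap} together with $\kappa=\frac{1-a}{1+a}$, one has $\frac{1-\kappa}{1+\kappa}=a$, so that a function in $\sN$ belongs to $\sM_\kappa$ precisely when it takes the value $ia$ at $z=i$. Since every impedance function admits the representation \eqref{e-60-nu}, evaluating at $z=i$ gives $V_{\Theta_{\mu,h}}(i)=Q+i\int_\bbR\frac{d\sigma(\lambda)}{1+\lambda^2}$, whence $\RE V_{\Theta_{\mu,h}}(i)=Q$ and $\IM V_{\Theta_{\mu,h}}(i)=\int_\bbR\frac{d\sigma(\lambda)}{1+\lambda^2}$. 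Thus $V_{\Theta_{\mu,h}}\in\sN$ (that is, $Q=0$) exactly when $V_{\Theta_{\mu,h}}(i)$ is purely imaginary, and the normalization defining $\sM_\kappa$ holds exactly when that imaginary part equals $a$. Consequently the entire theorem is equivalent to the single equation $V_{\Theta_{\mu,h}}(i)=ia$, with the standing hypothesis on $|\kappa|$ to be recovered afterward from \eqref{e-37}.

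Next I would substitute $z=i$ into the closed form \eqref{1501}. Writing $m_\infty(i)=c+id$ with $c=\RE m_\infty(i)$ and $d=\IM m_\infty(i)<0$, and $h=\RE h+i\,\IM h$, the equation $V_{\Theta_{\mu,h}}(i)=ia$ becomes ``numerator $=ia\cdot$ denominator''. Separating real and imaginary parts yields two real equations: the real part is $(\IM h)(c+\mu)=-a\,d\,(\mu-\RE h)$, and the imaginary part, after using the identity $\mu c-(\RE h)c+\mu\RE h-(\RE h)^2=(\mu-\RE h)(\RE h+c)$, is $(\IM h)\,d=a\big[(\mu-\RE h)(\RE h+c)-(\IM h)^2\big]$. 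I would then split on whether $c+\mu$ vanishes. If $c+\mu=0$, the real equation forces $\RE h=\mu=-c$, and the imaginary equation collapses to $(\IM h)d=-a(\IM h)^2$, giving $\IM h=-d/a$; this is exactly \eqref{e-36-h}. If $c+\mu\ne0$, I would solve the real equation for $\mu-\RE h$, substitute it into the imaginary equation, cancel the common factor $\IM h$ (nonzero since $\IM h>0$), and combine the result with the real equation to obtain a single linear equation for $\RE h$. Solving it and back-substituting then produces the rational expressions in \eqref{e-37-h}.

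For the converse I would substitute each of \eqref{e-36-h} and \eqref{e-37-h} back into \eqref{1501} at $z=i$ and verify that the ratio reduces to $ia$; for \eqref{e-36-h} this is immediate since the real part of the denominator cancels, while for \eqref{e-37-h} it is a direct, if longer, simplification. Evaluating \eqref{e-37} at these values then confirms $|\kappa|=\frac{1-a}{1+a}$, so the resulting L-systems do carry the prescribed modulus of the von Neumann parameter. The main obstacle is the elimination in the case $c+\mu\ne0$: one must exploit the factorization $(\mu-\RE h)(\RE h+c)$ so that the apparent quadratic in $\IM h$ degenerates (its spurious root $\IM h=0$ being discarded), after which only a linear equation in the surviving unknown remains. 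The two displayed formulas are then precisely the outputs of the $c+\mu=0$ and $c+\mu\ne0$ branches of this analysis, with \eqref{e-36-h} recovering the value of \eqref{e-37-h} at $\mu=-\RE m_\infty(i)$.
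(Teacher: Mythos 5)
Your proposal is correct and follows essentially the same route as the paper's proof: both reduce membership in $\sM_\kappa$ to the single normalization $V_{\Theta_{\mu,h}}(i)=ai$, substitute $z=i$ into \eqref{1501}, equate real and imaginary parts, and eliminate to obtain the two branches \eqref{e-36-h} and \eqref{e-37-h}. The only cosmetic difference is the handling of the degenerate case---you split on $c+\mu=0$ while the paper factors out $(\mu-x)$ and treats $\mu=x$ first---but both singled-out cases produce the same solution $\RE h=\mu=-c$, $\IM h=-d/a$, and your ordering has the minor advantage of avoiding the paper's division by $c+\mu$ before the case distinction is made.
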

\begin{proof}
It is well known (see  \cite{BMkT-2}) that the impedance function $V_{\Theta_{\mu, h}}(z)$ belongs to the class $\sM_\kappa$ for $\kappa=\frac{1-a}{1+a}$,  $0<a<1$  if and only if it has the integral representation
\begin{equation}\label{e-35-new}
V_{\Theta_{\mu, h}}(z)=\int_\bbR \left
(\frac{1}{\lambda-z}-\frac{\lambda}{1+\lambda^2}\right)d\sigma(\lambda),\quad \int_\bbR\frac{d\sigma(\lambda)}{1+\lambda^2}=a,\quad 0<a<1,
\end{equation}

Suppose that the impedance function $V_{\Theta_{\mu, h}}(z)\in\sM_\kappa$ and thus has the integral representation \eqref{e-35-new} with $0<a<1$. 
Then, \eqref{1501} and \eqref{e-35-new} imply
\begin{equation}\label{e-43-V}
V_{\Theta_{\mu, h}}(i)=\frac{\left(m_\infty(i)+\mu\right)\IM h}{\left(\mu-\RE h\right)m_\infty(i)+\mu\RE h-|h|^2}=ai.
\end{equation}
Setting
\begin{equation}\label{e-44-set}
    c=\RE m_\infty(i),\quad d=\IM m_\infty(i),\quad x=\RE h,\quad y=\IM h,
\end{equation}
transforms \eqref{e-43-V} into
\begin{equation}\label{e-45}
\frac{cy+\mu y+dyi}{(\mu-x)c+\mu x -x^2-y^2+(\mu d-x d)i}=ai.
\end{equation}
Cross multiplying \eqref{e-45} and equating the real parts yields
$cy+\mu y=axd-a\mu d,$
or
\begin{equation}\label{e-46-y}
y=\frac{ad(x-\mu)}{c+\mu}.
\end{equation}
Equating the imaginary parts in \eqref{e-45} leads to
$$
dy=ac(\mu-x)+a\mu x-a x^2-a y^2.
$$
Substituting \eqref{e-46-y} into the above equation results in
$$
\frac{d^2(\mu-x)}{c+\mu}+c(\mu-x)+x(\mu-x)-\frac{a^2d^2(\mu-x)^2}{(c+\mu)^2}=0,
$$
or
$$
(\mu-x)\left( \frac{d^2}{c+\mu}+c+x-\frac{a^2d^2(\mu-x)}{(c+\mu)^2} \right)=0.
$$
Exploring the first solution of the above equation when $\mu=x$ and applying \eqref{e-46-y} leads us to $x=-c$ and $y=-d/a$, or
$$
\RE h=-\RE m_\infty(i) \quad\textrm{ and}\quad \IM h=-\left(\frac{i}{a}\right)\IM m_\infty(i),
$$
that confirms \eqref{e-36-h}.

Assuming that $\mu\ne x$ and setting the second factor to be zero yields
$$
a^2d^2(\mu-x)-c(c+\mu)^2-x(c+\mu)^2-d^2(c+\mu)=0,
$$
that we solve for $x$ to obtain
\begin{equation}\label{e-47-x}
    x=\frac{a^2d^2\mu-d^2(c+\mu)-c(c+\mu)^2}{a^2d^2+(c+\mu)^2}.
\end{equation}
Substituting \eqref{e-47-x} into \eqref{e-46-y} results in
\begin{equation}\label{e-48-y}
    y=-\frac{ad^3+ad(c+\mu)^2}{a^2d^2+(c+\mu)^2}.
\end{equation}
Keeping in mind that $x=\RE h$ and $y=\IM h$ we confirm \eqref{e-37-h}.

Conversely, suppose either \eqref{e-36-h} or \eqref{e-37-h} hold. If \eqref{e-36-h} is true, then substituting this value of $h$ and $\mu=-\RE m_\infty(i)$ into the left side of \eqref{e-43-V} we obtain that $V_{\Theta_{\mu, h}}(i)=ai$. In case if \eqref{e-37-h} is true we also substitute the value of $h$ in  \eqref{e-43-V} and get that $V_{\Theta_{\mu, h}}(i)=ai$ for any real $\mu\ne-\RE m_\infty(i)$. Combining this normalization condition of $V_{\Theta_{\mu, h}}(z)$ with its known integral representation we conclude that $V_{\Theta_{\mu, h}}(z)\in\sM_\kappa$ and is described by \eqref{e-35-new}.
\end{proof}

A very similar result takes place for the class $\sM_\kappa^{-1}$.
\begin{theorem}\label{t-7-1}
Let $a>1$ and $\kappa=\frac{a-1}{1+a}$. Also, let  $\Theta_{\mu, h}$ be a Schr\"odinger L-system \eqref{149} with the main operator $T_h$ of the form \eqref{131} that has the modulus of von Neumann's parameter equal $\kappa$. Then the impedance function $V_{\Theta_{\mu, h}}(z)$
 belongs to the class $\sM_\kappa^{-1}$ with $\kappa=\frac{a-1}{1+a}$ if and only if either \eqref{e-36-h} or \eqref{e-37-h} hold true for $a>1$.
\end{theorem}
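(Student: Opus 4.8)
The plan is to follow the proof of \thmref{t-7} essentially verbatim, since the passage from $\sM_\kappa$ to $\sM_\kappa^{-1}$ amounts only to enlarging the range of the normalization constant from $0<a<1$ to $a>1$. First I would record, via \eqref{e-39-kap} and \eqref{e-45-kappa-2}, the analytic criterion for membership: a function $V_{\Theta_{\mu, h}}(z)\in\sN$ lies in $\sM_\kappa^{-1}$ with $\kappa=\frac{a-1}{1+a}$ if and only if its representing measure satisfies $\int_\bbR\frac{d\sigma(\lambda)}{1+\lambda^2}=a$ for some $a>1$, equivalently $V_{\Theta_{\mu, h}}(i)=ai$. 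Note that $\kappa=\frac{a-1}{1+a}\in(0,1)$ for every $a>1$, so the hypothesis is consistent, and the single normalization value $V_{\Theta_{\mu, h}}(i)=ai$ is again the only analytic input into the argument.

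For the forward direction I would substitute $z=i$ into the explicit impedance formula \eqref{1501} and impose $V_{\Theta_{\mu, h}}(i)=ai$; this reproduces equation \eqref{e-43-V} identically. From here the computation is purely algebraic and nowhere uses the inequality $a<1$: with the abbreviations $c=\RE m_\infty(i)$, $d=\IM m_\infty(i)$, $x=\RE h$, $y=\IM h$ of \eqref{e-44-set}, cross-multiplication and separation of real and imaginary parts yield \eqref{e-46-y} together with a single factored equation carrying the alternative $\mu=x$ versus $\mu\ne x$. The branch $\mu=x$ (where \eqref{e-46-y} becomes indeterminate and one returns to \eqref{e-43-V} directly, forcing $\mu=-c$) gives $x=-c$, $y=-d/a$, i.e.\ \eqref{e-36-h}; the branch $\mu\ne x$ produces \eqref{e-47-x} and \eqref{e-48-y}, i.e.\ \eqref{e-37-h}. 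These are word-for-word the same manipulations as in \thmref{t-7}, so they transfer unchanged.

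For the converse I would insert each of \eqref{e-36-h} and \eqref{e-37-h} back into \eqref{e-43-V} and check that $V_{\Theta_{\mu, h}}(i)=ai$ (taking $\mu=-\RE m_\infty(i)$ in the first case and any real $\mu\ne-\RE m_\infty(i)$ in the second), exactly as before. Combined with the Herglotz integral representation of $V_{\Theta_{\mu, h}}$, the value $a>1$ then places the impedance function in $\sM_\kappa^{-1}$ by \eqref{e-45-kappa-2}.

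I do not expect a genuine obstacle here: the entire content is that the algebraic identity tying $h$ to the data $(\mu,c,d,a)$ is insensitive to the magnitude of $a$, and that the trichotomy $a<1$, $a=1$, $a>1$ merely selects among $\sM_\kappa$, $\sM$, and $\sM_\kappa^{-1}$. The only points requiring a word of care are that the denominators $a^2d^2+(c+\mu)^2$ appearing in \eqref{e-47-x}--\eqref{e-48-y} stay strictly positive, which holds since $d=\IM m_\infty(i)\ne0$, and that $\IM h=-d/a>0$ for $a>1$, so the resulting $T_h$ is genuinely dissipative, consistent with the standing assumptions.
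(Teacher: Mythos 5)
Your proposal is correct and matches the paper's own proof, which likewise reduces Theorem \ref{t-7-1} to the algebra of Theorem \ref{t-7}: the integral-representation criterion \eqref{e-49-new} gives $V_{\Theta_{\mu,h}}(i)=ai$ with $a>1$, the same manipulations of \eqref{e-43-V} yield \eqref{e-36-h} or \eqref{e-37-h}, and the converse is the same substitution check. Your added remarks (nonvanishing denominators, $\IM h=-d/a>0$, and the handling of the $\mu=x$ branch via $c+\mu=0$) are sound and only make explicit what the paper leaves implicit.
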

\begin{proof}
It is well known (see  \cite{BMkT-2}) that the impedance function $V_{\Theta_{\mu, h}}(z)$ belongs to the class $\sM_\kappa^{-1}$ for $\kappa=\frac{a-1}{1+a}$,  $a>1$  if and only if it
has the integral representation
\begin{equation}\label{e-49-new}
V_{\Theta_{\mu, h}}(z)=\int_\bbR \left
(\frac{1}{\lambda-z}-\frac{\lambda}{1+\lambda^2}\right)d\sigma(\lambda),\quad \int_\bbR\frac{d\sigma(\lambda)}{1+\lambda^2}=a,\quad 1<a.
\end{equation}
The rest of the proof of Theorem \ref{t-7-1} resembles the one of Theorem \ref{t-7} taking into account that $a>1$ this time. Suppose that the impedance function $V_{\Theta_{\mu, h}}(z)\in\sM_\kappa^{-1}$ with $\kappa$ given by \eqref{e-45-kappa-2} and hence has the integral representation \eqref{e-49-new} with $a>1$. Then, as we have shown in the proof of Theorem \ref{t-7}, relations \eqref{1501} and \eqref{e-35-new} imply \eqref{e-43-V} for $a>1$. Following the steps in the proof of Theorem \ref{t-7} we use \eqref{e-43-V} to obtain \eqref{e-47-x} and \eqref{e-48-y} for $a>1$ thus confirming either \eqref{e-36-h} or \eqref{e-37-h}.

Conversely, suppose either \eqref{e-36-h} or \eqref{e-37-h} hold true for $a>1$. Following the steps of the proof of the necessity in Theorem \ref{t-7} we obtain that $V_{\Theta_{\mu, h}}(i)=ai$ for $a>1$ and conclude that $V_{\Theta_{\mu, h}}(z)\in\sM_\kappa^{-1}$.
\end{proof}

\section{On von Neumann's parameter of extremal Schr\"odinger operator $T_h$}\label{s5}

Recall that, the Philips-Kato extension problem is about giving a description of all maximal accretive and/or sectorial extensions $A$ of a non-negative symmetric operator $\dA$ such that $\dA\subset A\subset\dA^*$. In this and the next sections we discuss the Phillips-Kato extension problem for a non-negative Schr\"odinger operator  with deficiency indices $(1, 1)$ in $\calH=L^2(\ell, \infty)$. For the one-dimensional Schr\"odinger operator $T_h$ ($\IM h>0$) on the semi-axis the  Phillips-Kato extension problem in restricted sense was solved by one of the authors in \cite{T87} (see also \cite{AKT}, \cite{AT2009}, \cite{Ts2}). The solution  in terms of boundary values  is presented in Theorem \ref{t-8} below. Our goal, however, is in presenting the necessary condition to the existence of solution for the  Phillips-Kato problem in terms of the modulus of von Neumann's parameter.

Suppose that the symmetric operator $\dA$ of the form \eqref{128} with deficiency indices (1,1) is nonnegative, i.e., $(\dA f,f) \geq 0$ for all $f \in \dom(\dA)$. 
\begin{theorem}[\cite{Ts2}, \cite{T87}, see also \cite{AT2009}]\label{t-8}%
Let $\dA$ be a nonnegative symmetric Schr\"odinger operator of the form \eqref{128} with deficiency indices $(1, 1)$ and locally summable potential in $\calH=L^2(\ell, \infty).$ Consider operator $T_h$ of the form \eqref{131}.  Then
 \begin{enumerate}
\item operator $\dA$ has more than one non-negative self-adjoint extension, i.e., the Friedrichs extension $A_F$ and the Kre\u{\i}n-von Neumann extension $A_K$ do not coincide if and only if $m_{\infty}(-0)<\infty$;
 \item operator $T_h$ coincides with the Kre\u{\i}n-von Neumann extension if and  only if $h=-m_{\infty}(-0)$;
\item operator $T_h$ is accretive if and only if
\begin{equation}\label{138}
\RE h\geq-m_\infty(-0);
\end{equation}
\item operator $T_h$, ($h\ne\bar h$) is $\alpha$-sectorial if and only if  $\RE h >-m_{\infty}(-0)$ holds;
\item operator $T_h$, ($h\ne\bar h$) is accretive extremal if and only if $\RE h=-m_{\infty}(-0)$;
\item if $T_h, (\IM h>0)$ is $\alpha$-sectorial, then the exact angle of sectoriality $\alpha$ can be calculated as
\begin{equation}\label{e10-45}
\tan\alpha=\frac{\IM h}{\RE h+m_{\infty}(-0)}.
\end{equation}

\end{enumerate}
\end{theorem}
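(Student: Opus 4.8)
The plan is to reduce all six assertions to a single quadratic-form identity for $T_h$ together with a variational description of the Weyl solution $\varphi(\cdot,\lambda)$. First I would integrate by parts in $(T_hy,y)=\int_\ell^{\infty}(-y''+qy)\overline{y}\,dx$ for $y\in\dom(T_h)$; because $\ell$ is regular while $+\infty$ is of limit-point type, the boundary term at infinity drops out, leaving $(T_hy,y)=y'(\ell)\overline{y(\ell)}+B[y]$ with $B[y]:=\int_\ell^{\infty}(|y'|^2+q|y|^2)\,dx\in\bbR$. Using $y'(\ell)=hy(\ell)$ on $\dom(T_h)$ this gives
\begin{equation*}
\RE(T_hy,y)=(\RE h)|y(\ell)|^2+B[y],\qquad \IM(T_hy,y)=(\IM h)|y(\ell)|^2,
\end{equation*}
the second identity being exactly \eqref{e-25}. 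Thus accretivity, sectoriality, and the exact angle are all governed by how small $B[y]$ can be made relative to $|y(\ell)|^2$.

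The analytic core is the formula $\inf\{B[y]/|y(\ell)|^2 : y\in\dom(T_h),\ y(\ell)\neq0\}=m_\infty(-0)$. For the lower bound I would fix $\lambda<0$ and minimize the form $\int_\ell^{\infty}(|y'|^2+(q-\lambda)|y|^2)\,dx$—positive because $\dA\ge0$—over finite-energy $y$ with $y(\ell)=c$ prescribed. The minimizer is the $L^2$ solution of $-y''+(q-\lambda)y=0$ with that boundary value, namely $w=-c\,\varphi(\cdot,\lambda)$ (recall $\varphi(\ell,\lambda)=-1$, $\varphi'(\ell,\lambda)=m_\infty(\lambda)$, so $w(\ell)=c$), and integrating by parts evaluates the minimum as $-w'(\ell)\overline{w(\ell)}=|c|^2 m_\infty(\lambda)$. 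Hence $B[y]\ge m_\infty(\lambda)|y(\ell)|^2+\lambda\|y\|^2$, and letting $\lambda\uparrow0$ along the monotone limit defining $m_\infty(-0)$ yields $B[y]\ge m_\infty(-0)|y(\ell)|^2$. The matching upper bound comes from the Weyl solutions themselves, for which $B[\varphi(\cdot,\lambda)]=m_\infty(\lambda)+\lambda\|\varphi(\cdot,\lambda)\|^2$ while $|\varphi(\ell,\lambda)|=1$.

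The step I expect to be the main obstacle is precisely this upper bound, since $\varphi(\cdot,\lambda)$ has the \emph{real} boundary ratio $-m_\infty(\lambda)$ and so does not lie in $\dom(T_h)$ (whose elements obey $y'(\ell)=hy(\ell)$ with $\IM h>0$). I would cure this by a local surgery near $\ell$: add to $\varphi(\cdot,\lambda)$ a function supported on $[\ell,\ell+\delta]$, vanishing at $\ell$ and correcting the derivative so that the boundary condition of $T_h$ holds. This leaves $y(\ell)$ unchanged and perturbs $B$ by $O(\delta)$, so letting $\delta\downarrow0$ and then $\lambda\uparrow0$ keeps the infimum over $\dom(T_h)$ equal to $m_\infty(-0)$. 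One must also verify the routine fact $\lambda\|\varphi(\cdot,\lambda)\|^2\to0$, which follows from the trace identity $\|\varphi(\cdot,\lambda)\|^2=|m_\infty'(\lambda)|$ and integrability of $m_\infty'$ near $0$ (guaranteed by finiteness of the monotone limit).

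With the infimum computed, (3)–(6) are immediate from the form identity: $\RE(T_hy,y)\ge(\RE h+m_\infty(-0))|y(\ell)|^2$, with equality approached along the minimizing sequence, yields accretivity exactly for $\RE h\ge-m_\infty(-0)$, which is (3); and
\begin{equation*}
\tan\alpha=\sup_{y\in\dom(T_h)}\frac{|\IM(T_hy,y)|}{\RE(T_hy,y)}=\frac{\IM h}{\RE h+m_\infty(-0)}
\end{equation*}
is (6), finite precisely when $\RE h>-m_\infty(-0)$ (the $\alpha$-sectorial case (4)) and forcing the supremum to $+\infty$ while $\RE(T_hy,y)\ge0$ when $\RE h=-m_\infty(-0)$ (the extremal accretive case (5)). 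Finally (1) and (2) follow from self-adjoint extension theory: the self-adjoint extensions other than the Friedrichs (Dirichlet) one are the $A_\xi$ with $y'(\ell)=\xi y(\ell)$, and $A_\xi$ has a negative eigenvalue $\lambda$ exactly when $\xi=-m_\infty(\lambda)$; together with the monotonicity of $m_\infty$ this gives $A_\xi\ge0\iff\xi\ge-m_\infty(-0)$ (consistent with (3) for real $h$). Hence the Kre\u{\i}n extension is the minimal nonnegative extension $A_{-m_\infty(-0)}$, so $T_h=A_K\iff h=-m_\infty(-0)$, which is (2), and this extension differs from the Dirichlet/Friedrichs extension exactly when $m_\infty(-0)$ is finite, which is (1).
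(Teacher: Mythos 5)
The first thing to note is that the paper does not prove Theorem \ref{t-8} at all: it is imported verbatim from \cite{Ts2}, \cite{T87}, \cite{AT2009}, where the arguments run through Kre\u{\i}n's theory of nonnegative self-adjoint extensions and characteristic-function machinery rather than through quadratic forms. So your variational route is genuinely different from the source proofs, and much of its architecture is sound: your imaginary-part identity agrees with \eqref{e-25} (which the paper derives by a completely different, distributional computation); the eigenvalue criterion ``$\lambda<0$ is an eigenvalue of $A_\xi$ iff $\xi=-m_\infty(\lambda)$'' and the resulting identification of the Kre\u{\i}n--von Neumann extension with the boundary parameter $-m_\infty(-0)$ are correct and give (1)--(2); and your upper bound works via the squeeze $m_\infty(-0)\le B[\varphi(\cdot,\lambda)]=m_\infty(\lambda)+\lambda\|\varphi(\cdot,\lambda)\|^2\le m_\infty(\lambda)$ (valid since $\lambda<0$), so you never actually need the identity $\|\varphi(\cdot,\lambda)\|^2=|m_\infty'(\lambda)|$ nor the claim $\lambda\|\varphi(\cdot,\lambda)\|^2\to0$ --- which is fortunate, since integrability of $m_\infty'$ near $0$ would not give that pointwise limit anyway.

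The genuine gap is your very first step. You assert that, because $+\infty$ is of limit-point type, the boundary term $y'(R)\overline{y(R)}$ vanishes as $R\to\infty$ and $B[y]=\int_\ell^\infty(|y'|^2+q|y|^2)\,dx$ converges for \emph{every} $y\in\dom(T_h)$. The limit-point property yields only the vanishing of the Wronskian-type bracket $u\overline{v'}-u'\overline{v}$ at infinity, i.e.\ of $\IM\bigl(y'(R)\overline{y(R)}\bigr)$; control of $\RE\bigl(y'(R)\overline{y(R)}\bigr)=\tfrac12\bigl(|y|^2\bigr)'(R)$ and convergence of the Dirichlet integral are the strictly stronger ``strong limit-point/Dirichlet'' properties, which fail for general locally summable $q$ in the limit-point case and are not an automatic consequence of $\dA\ge0$ either. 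Since your lower bound $B[y]\ge m_\infty(-0)|y(\ell)|^2$ is derived only for ``finite-energy'' $y$, the halves of (3)--(6) that require an estimate on \emph{all} of $\dom(T_h)$ (sufficiency in (3), the upper bound on the ratio in (4)--(6)) are unproved as written; only the ``only if'' directions, which use your explicitly constructed modified Weyl solutions, are complete. The gap is repairable while keeping your strategy: for $\lambda<0$ decompose $y=y_0-y(\ell)\,\varphi(\cdot,\lambda)$, where $y_0=(A_F-\lambda)^{-1}(\dA^*-\lambda)y\in\dom(A_F)$ and $A_F$ is the Friedrichs extension. Green's formula on $[\ell,R]$ plus the legitimate limit-point vanishing of the bracket then give, for every $y\in\dom(T_h)$,
\begin{equation*}
(T_hy,y)=\lambda\|y\|^2+\bigl((A_F-\lambda)y_0,y_0\bigr)+\bigl(h+m_\infty(\lambda)\bigr)|y(\ell)|^2 ,
\end{equation*}
with no convergence issues at all. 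Since $A_F\ge0$ and $\lambda<0$, the middle term is nonnegative, so $\RE(T_hy,y)\ge\lambda\|y\|^2+\bigl(\RE h+m_\infty(\lambda)\bigr)|y(\ell)|^2$ and $\IM(T_hy,y)=(\IM h)|y(\ell)|^2$; letting $\lambda\uparrow0$ yields $\RE(T_hy,y)\ge\bigl(\RE h+m_\infty(-0)\bigr)|y(\ell)|^2$ on all of $\dom(T_h)$. With this substitute for your naive form identity, your near-minimizers complete (3)--(6), including the exact-angle formula \eqref{e10-45}, exactly as you outline.
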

It follows from item (2) of Theorem \ref{t-8} that if $m_{\infty}(-0)=\infty$, then the operator $T_h$ turns into a self-adjoint operator $T_\infty$ corresponding to the Dirichlet problem
$$
\left\{ \begin{array}{l}
 T_\infty y=-y^{\prime\prime}+q(x)y \\
 y(\ell)=0. \\
 \end{array} \right.
$$
For the remainder of this paper we assume that $m_{\infty}(-0)<\infty$. Then according to Theorem \ref{t-8} above (see also \cite{Ts81}, \cite{Ts80}) we have the existence of the operator $T_h$, ($\IM h>0$) which is accretive and/or sectorial.
The following  was shown in \cite{ABT}. Let  $T_h \;(\IM h>0)$ be an accretive Schr\"odinger operator of the
form \eqref{131}. Then for all real $\mu$ satisfying the  inequality
\begin{equation}\label{151}
\mu \geq \frac {(\IM h)^2}{m_\infty(-0)+\RE h}+\RE h,
\end{equation}
the operators \eqref{137} form the set of all accretive $(*)$-extensions $\bA$ of the operator $T_h$.
The accretive operator $T_h$ has a unique accretive
$(*)$-extension $\bA$ if and only if
$$\RE h=-m_\infty(-0).$$
In this case this unique $(*)$-extension (and its adjoint) has the form
\begin{equation}\label{153}
\begin{aligned}
&\bA y=-y^{\prime\prime}+q(x)y+[hy(\ell)-y^{\prime}(\ell)]\,\delta(x-\ell), \\
&\bA^* y=-y^{\prime\prime}+q(x)y+[\overline h
y(\ell)-y^{\prime}(\ell)]\,\delta(x-\ell).
\end{aligned}
\end{equation}

Now suppose that $\Theta_{\mu, h}$ of the form \eqref{149} is an L-system with the main Schr\"odinger  operator $T_h$ defined by \eqref{131}.
We are going to tackle the Phillips-Kato extension problem with the help of the von Neumann parameter $\kappa$ of  $T_h$. Assume also that $\mu$ is given by \eqref{e-42} and hence $T_h$  satisfies the  Hypothesis \ref{setup} with real  $\kappa$ given by \eqref{e-41}. Set
\begin{equation}\label{e-43-new}
    m_\infty(i)=A-iB,\; B>0, \; m_\infty(-0)=m,\; C=(A-m)^2,\; D=C+B^2>0.
\end{equation}
Then \eqref{e-41} yields
$$
\begin{aligned}
\kappa&=\left|\frac{m_\infty(i)+h}{m_\infty(i)+ \overline h}\right|=\left|\frac{A-iB+\RE h+i\IM h}{A-iB+\RE h-i\IM h}\right|=\left|\frac{(A+\RE h)+i(\IM h-B)}{(A+\RE h)-i(\IM h+B)}\right|\\
&=\sqrt{\frac{(A+\RE h)^2+(\IM h-B)^2}{(A+\RE h)^2+(\IM h +B)^2}},
\end{aligned}
$$
or
\begin{equation}\label{e-44-new}
    \kappa^2=\frac{(A+\RE h)^2+(\IM h-B)^2}{(A+\RE h)^2+(\IM h +B)^2}.
\end{equation}

Suppose that $T_h$ is an extremal accretive operator. According to Theorem \ref{t-8} we have that $\RE h=-m$. Applying this to \eqref{e-44-new} and using notations \eqref{e-43-new} gives
$$\begin{aligned}
\kappa^2&=\frac{(A-m)^2+(\IM h-B)^2}{(A-m)^2+(\IM h +B)^2}=\frac{C+(\IM h)^2-2B\IM h+B^2}{C+(\IM h)^2+2B\IM h+B^2}\\
&=\frac{(\IM h)^2-2B\IM h+D}{(\IM h)^2+2B\IM h+D}.
\end{aligned}
$$
Consider $\kappa^2$ as a function $f$ of $H=\IM h$, that is
\begin{equation}\label{e-45-new}
f(H)=\kappa^2(H)=\frac{H^2-2B H+D}{H^2+2B H+D}\,.
\end{equation}
Taking the derivative of $f(H)$ in \eqref{e-45-new} and simplifying yields
$$
f'(H)=\frac{4B(H^2-D)}{(H^2+BH+D)^2}.
$$
Setting $f'(H)=0$ and keeping in mind that $H=\IM h>0$ and $D>0$ we obtain a critical number
$$
H=\sqrt D,
$$
that can be checked to be a point of minimum of $f(H)$ for all $H>0$. Also, direct substitution gives
$$
f(\sqrt D)=\frac{\sqrt D-B}{\sqrt D+B}.
$$

Consequently,
$$
\frac{\sqrt D-B}{\sqrt D+B}\le\kappa^2<1,
$$
and hence
$$
    \sqrt{\frac{\sqrt D-B}{\sqrt D+B}}\le\kappa<1,
$$
or, after backward substitution and simplification,
\begin{equation}\label{e-46}
    \sqrt{\frac{\sqrt{|m_\infty(i)|^2-2m_\infty(-0)\RE\,m_\infty(i)+m_\infty^2(-0)} +\Im\, m_\infty(i)}{\sqrt{|m_\infty(i)|^2-2m_\infty(-0)\RE\,m_\infty(i)+m_\infty^2(-0)}-\Im\, m_\infty(i)}}\le\kappa<1.
\end{equation}
Thus, $T_h$ is extremal accretive operator if and only if \eqref{e-46} holds, that is, the von Neumann parameter $\kappa$ of $T_h$ is such that $\kappa_0\le\kappa<1$, with
\begin{equation}\label{e-47}
    \kappa_0=\sqrt{\frac{\sqrt{|m_\infty(i)|^2-2m_\infty(-0)\RE\,m_\infty(i)+m_\infty^2(-0)} +\Im\, m_\infty(i)}{\sqrt{|m_\infty(i)|^2-2m_\infty(-0)\RE\,m_\infty(i)+m_\infty^2(-0)}-\Im\, m_\infty(i)}}.
\end{equation}
A sample graph of $\kappa$ as a function of $\IM h$ is shown in Figure \ref{fig-1}.

We can summarize the above reasoning in the following theorem.
\begin{theorem}\label{t-9-1}
Under the condition \textrm{(1)} of Theorem \ref{t-8}, if a Schr\"odinger operator $T_h$ of the form \eqref{131} with the modulus of von Neumann's parameter $\kappa$ is extremal accretive, then $\kappa_0\le\kappa<1$, where $\kappa_0$ is given by \eqref{e-47}.
\end{theorem}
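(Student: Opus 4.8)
The plan is to reduce the claim to a one-variable minimization. Working under Hypothesis~\ref{setup} with $\mu$ chosen as in \eqref{e-42}, the von Neumann parameter is real and equal to \eqref{e-41}, namely $\kappa=\left|\frac{m_\infty(i)+h}{m_\infty(i)+\overline h}\right|$. First I would square this modulus and expand, writing $m_\infty(i)=A-iB$ with $B=-\IM m_\infty(i)>0$ as in \eqref{e-43-new}; this renders $\kappa^2$ as the rational function \eqref{e-44-new} of the two real variables $\RE h$ and $\IM h$.

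The decisive ingredient is the extremality criterion of \thmref{t-8}: by item~(5) there, a Schr\"odinger operator $T_h$ with $\IM h>0$ is accretive extremal exactly when $\RE h=-m_\infty(-0)$. Imposing this single equation eliminates $\RE h$ and collapses $\kappa^2$ into a function of the sole variable $H=\IM h>0$, namely \eqref{e-45-new}, $f(H)=\frac{H^2-2BH+D}{H^2+2BH+D}$ with $D=(A-m_\infty(-0))^2+B^2>0$. From here the argument is pure calculus.

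Next I would differentiate $f$, note that $f'(H)$ vanishes on $(0,\infty)$ only at $H=\sqrt D$, and inspect the sign of $f'$ (negative on $(0,\sqrt D)$ and positive on $(\sqrt D,\infty)$, since the numerator is proportional to $B(H^2-D)$ with $B>0$) to confirm that this critical point is the global minimum on the half-line. Evaluating there gives $f(\sqrt D)=\frac{\sqrt D-B}{\sqrt D+B}=\kappa_0^2$. Combining this lower bound with the general fact $\kappa<1$ already recorded after \eqref{e-37} yields $\kappa_0\le\kappa<1$; undoing the substitutions (so that $B=-\IM m_\infty(i)$ and $D=|m_\infty(i)|^2-2m_\infty(-0)\RE m_\infty(i)+m_\infty^2(-0)$) recovers the explicit formula \eqref{e-47}.

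The differentiation and simplification are routine; the one point deserving care is confirming that $H=\sqrt D$ really furnishes the minimum over the \emph{entire} admissible range $H\in(0,\infty)$ rather than merely a local extremum. Because the extremality condition pins $\RE h$ to $-m_\infty(-0)$ while leaving $\IM h$ free to sweep all of $(0,\infty)$, every positive value of $H$ corresponds to an admissible extremal accretive $T_h$, so the infimum is attained at $\sqrt D$ and the bound $\kappa_0\le\kappa$ is sharp.
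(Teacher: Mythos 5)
Your proposal is correct and follows essentially the same route as the paper: both impose the extremality criterion $\RE h=-m_\infty(-0)$ from item (5) of Theorem \ref{t-8} in formula \eqref{e-44-new}, reduce $\kappa^2$ to the one-variable function $f(H)$ of \eqref{e-45-new}, and minimize by calculus at $H=\sqrt{D}$ to obtain $f(\sqrt D)=\frac{\sqrt D-B}{\sqrt D+B}=\kappa_0^2$, whence $\kappa_0\le\kappa<1$. Your explicit sign analysis of $f'$ and the remark that every $H\in(0,\infty)$ is admissible (so the bound is attained and sharp) merely make explicit what the paper leaves as "can be checked."
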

The proof follows from part (5) of Theorem \ref{t-8} and formulas \eqref{e-46} and \eqref{e-47}.

The following lemma  contains a  useful property of the function $m_\infty(z)$ that makes  $\kappa_0$  (given by \eqref{e-47}) vanish.

\begin{lemma}\label{l-13}
Under the conditions  of Theorem \ref{t-9-1}, the value of $\kappa_0$ given by \eqref{e-47} is zero if and only if
\begin{equation}\label{e-67-Am}
    \RE m_\infty(i)= m_\infty(-0).
\end{equation}
\end{lemma}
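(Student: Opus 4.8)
The plan is to show directly that the expression under the outer square root in \eqref{e-47} vanishes precisely when \eqref{e-67-Am} holds. Recall the notation \eqref{e-43-new}: write $m_\infty(i)=A-iB$ with $B=-\IM m_\infty(i)>0$, set $m=m_\infty(-0)$, and put $C=(A-m)^2$, $D=C+B^2$. Observe first that the quantity under the inner radical in \eqref{e-47}, namely $|m_\infty(i)|^2-2m_\infty(-0)\RE m_\infty(i)+m_\infty^2(-0)$, is exactly $(A^2+B^2)-2mA+m^2=(A-m)^2+B^2=C+B^2=D$. Thus the inner radical is $\sqrt{D}$ and $\kappa_0=\sqrt{\tfrac{\sqrt D+\IM m_\infty(i)}{\sqrt D-\IM m_\infty(i)}}$. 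Since $\IM m_\infty(i)=-B$, this is precisely $\kappa_0=\sqrt{\tfrac{\sqrt D-B}{\sqrt D+B}}$, matching the value $f(\sqrt D)^{1/2}$ computed before Theorem \ref{t-9-1}. This bookkeeping step is routine but worth stating so the reader can match \eqref{e-47} to the compact form.

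The core of the argument is then elementary. Since $B>0$ and $\sqrt D>0$, the denominator $\sqrt D+B$ is strictly positive, so $\kappa_0=0$ if and only if its numerator $\sqrt D-B=0$, i.e. if and only if $\sqrt D=B$, equivalently $D=B^2$. Because $D=C+B^2=(A-m)^2+B^2$, the equation $D=B^2$ is equivalent to $(A-m)^2=0$, that is $A=m$. Recalling $A=\RE m_\infty(i)$ and $m=m_\infty(-0)$, this is exactly condition \eqref{e-67-Am}, $\RE m_\infty(i)=m_\infty(-0)$. Both implications are captured simultaneously by this chain of equivalences, so no separate converse argument is needed.

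I would write the proof as a short sequence of equivalences:
\begin{equation*}
\kappa_0=0\iff \sqrt D=B\iff D=B^2\iff (A-m)^2=0\iff A=m,
\end{equation*}
followed by the identification $A=\RE m_\infty(i)$, $m=m_\infty(-0)$. I do not anticipate any genuine obstacle here: the only things to be careful about are the sign of $\IM m_\infty(i)$ (it is negative, which is why the numerator $\sqrt D+\IM m_\infty(i)$ equals $\sqrt D-B$ and can vanish while the denominator stays positive) and the positivity of $B$ and $\sqrt D$, which guarantees the denominator never vanishes and licenses the cancellation. The main thing to get right is simply the verification that the inner radicand equals $D$; once that identity is in place the result is immediate.
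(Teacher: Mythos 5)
Your proposal is correct and follows essentially the same route as the paper's proof: both reduce \eqref{e-47} to the compact form $\kappa_0=\sqrt{(\sqrt D-B)/(\sqrt D+B)}$ via the notation \eqref{e-43-new} and then observe that the numerator vanishes exactly when $C=(A-m)^2=0$. The only cosmetic difference is that you run the argument as a single chain of equivalences while the paper writes out the two implications separately; the mathematical content is identical.
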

\begin{proof}
Following our notation  that we set in \eqref{e-43-new}, from \eqref{e-47} we get that
\begin{equation}\label{e-68-k0}
    \kappa_0=\sqrt{\frac{\sqrt D-B}{\sqrt D+B}}
\end{equation}
Since $A=\RE m_\infty(i)$ and $m= m_\infty(-0)$, we need to show that $\kappa_0=0$ if and only if $A=m$.

If $A=m$, then using \eqref{e-43-new} we obtain
$$
\sqrt D-B=\sqrt{C+B^2}-B=\sqrt{(A-m)^2+B^2}-B=\sqrt{0+B^2}-B=0,
$$
and hence the numerator of the fraction in \eqref{e-68-k0} is zero making $\kappa_0=0$.

Conversely, assume that $\kappa_0=0$. Then $\sqrt D-B=0$ and hence $D=B^2$. This implies $C=A-m=0$ yielding  \eqref{e-67-Am}.
\end{proof}

Now we will state and prove an inverse version of Theorem \ref{t-9-1}.
\begin{theorem}\label{t-9-2}
Let $\dA$ be the same as in  Theorems \ref{t-8} and \ref{t-9-1}. If $\kappa_0\le\kappa<1$, with $\kappa_0$ given by \eqref{e-47}, then there exist two (just one if $\kappa=\kappa_0$)  extremal dissipative Schr\"odinger operators $T_h$, ($\dA \subset T_h \subset \dA^*$), of the form \eqref{131} such that their modulus of von Neumann's parameters equals  $\kappa$.

In this case, the boundary value(s) of $h$  are given by
\begin{equation}\label{e-66-h}
    h=-m_{\infty}(-0)+\frac{B(1+\kappa^2)\pm\sqrt{4{B}^2\kappa^2-C(1-\kappa^2)^2}}{1-\kappa^2}\,i,
\end{equation}
where $B$ and $C$ are given by \eqref{e-43-new}.
 \end{theorem}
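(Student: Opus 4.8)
The plan is to establish this as the converse of Theorem~\ref{t-9-1} by reversing the calculation that produced \eqref{e-46} and \eqref{e-47}. The forward direction showed that an extremal accretive $T_h$ forces $\RE h=-m_\infty(-0)=-m$ and that, under this constraint, $\kappa^2$ as a function of $H=\IM h$ is given by \eqref{e-45-new}, namely $f(H)=\frac{H^2-2BH+D}{H^2+2BH+D}$. So the task is essentially to invert $f$: given a target value $\kappa$ with $\kappa_0\le\kappa<1$, solve $f(H)=\kappa^2$ for $H>0$ and check there are exactly two solutions (one when $\kappa=\kappa_0$), each yielding an admissible extremal dissipative operator $T_h$ with $h=-m+Hi$.

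First I would fix $\RE h=-m$ (the extremality condition from part~(5) of Theorem~\ref{t-8}), so that $h=-m_\infty(-0)+Hi$ with $H=\IM h>0$ still free. Writing out $f(H)=\kappa^2$ and cross-multiplying gives
\begin{equation}\label{prop-quad}
(1-\kappa^2)H^2-2B(1+\kappa^2)H+D(1-\kappa^2)=0,
\end{equation}
a genuine quadratic in $H$ since $\kappa<1$. Solving by the quadratic formula produces
$$
H=\frac{B(1+\kappa^2)\pm\sqrt{B^2(1+\kappa^2)^2-D(1-\kappa^2)^2}}{1-\kappa^2},
$$
and substituting $D=C+B^2$ simplifies the discriminant to $4B^2\kappa^2-C(1-\kappa^2)^2$, which is exactly what appears under the radical in \eqref{e-66-h}. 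Adding $-m_\infty(-0)$ back as the real part of $h$ then reproduces the claimed formula.

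The main point to verify carefully is the admissibility and multiplicity of the roots, and this is where I expect the real work to lie. I would argue as follows: the condition $\kappa\ge\kappa_0$ is precisely the statement that the discriminant $4B^2\kappa^2-C(1-\kappa^2)^2$ is nonnegative, which one sees by tracing back through \eqref{e-47}; indeed $\kappa=\kappa_0$ corresponds to $f$ attaining its minimum $f(\sqrt D)=\frac{\sqrt D-B}{\sqrt D+B}$ at the critical point $H=\sqrt D$, so the discriminant vanishes exactly there, giving the single (double) root $H=\sqrt D$. For $\kappa_0<\kappa<1$ the discriminant is strictly positive, producing two distinct roots. I would then confirm both roots are positive: since the product of the roots is $\frac{D(1-\kappa^2)}{1-\kappa^2}=D>0$ and their sum is $\frac{2B(1+\kappa^2)}{1-\kappa^2}>0$, both roots are indeed positive, so each gives a legitimate value $H=\IM h>0$. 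Finally, with $\RE h=-m$ fixed, part~(5) of Theorem~\ref{t-8} guarantees each resulting $T_h$ is extremal accretive, and by construction \eqref{e-44-new} each has the prescribed modulus $\kappa$ of its von Neumann parameter. The only genuinely delicate step is establishing the equivalence ``discriminant $\ge 0$ iff $\kappa\ge\kappa_0$''; this amounts to checking that the two expressions for $\kappa_0$ match, i.e.\ that the boundary case of \eqref{prop-quad} collapses to $H=\sqrt D$, and then reconciling $\sqrt{\tfrac{\sqrt D-B}{\sqrt D+B}}$ with the radical form in \eqref{e-47} via the identities $C=(A-m)^2$, $D=C+B^2$ and $B=-\IM m_\infty(i)$.
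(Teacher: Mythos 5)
Your proposal is correct and takes essentially the same route as the paper's proof: fix $\RE h=-m_\infty(-0)$ via part (5) of Theorem \ref{t-8}, reduce $f(H)=\kappa^2$ to the quadratic \eqref{e-14}, solve it, and check that the discriminant is nonnegative exactly when $\kappa_0\le\kappa<1$ (vanishing only at $\kappa=\kappa_0$) with both roots positive. The only cosmetic difference is that the paper substitutes $\xi=\frac{1+\kappa^2}{1-\kappa^2}$ so that the discriminant $4B^2\xi^2-4D$ is visibly increasing and vanishes at $\xi_0=\sqrt D/B$, whereas you argue the discriminant sign from the minimum of $f$ at $H=\sqrt D$ and get positivity of the roots from Vieta's formulas (product $D>0$, sum $>0$); both verifications are valid.
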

\begin{proof}
Let $T_h$ be a Schr\"odinger operator of the form \eqref{131} with the same potential as  in \eqref{128}. All we need is to show that there exists a value of $h$ that makes $T_h$ an extremal dissipative quasi-self-adjoint extension of $\dA$ whose modulus of von Neumann's parameter is equal to the given $\kappa\in[\kappa_0,1)$. Note that equation \eqref{e-44-new} holds for any $T_h$ with von Neumann's parameter  $\kappa$. Setting $\RE h=-m$ in \eqref{e-44-new} will guarantee (see Theorem \ref{t-8}) that $T_h$ is extremal and yields \eqref{e-45-new}, that is,
\begin{equation}\label{e-14-old}
\kappa^2=\frac{{x}^2-2{B} {x}+{D}}{{x}^2+2{B} {x}+{D}},
\end{equation}
where ${x}=\IM h$. Modifying \eqref{e-14-old} leads to the quadratic equation
\begin{equation}\label{e-14}
x^2-2{B}\left(\frac{1+\kappa^2}{1-\kappa^2}\right)x+{D}=0.
\end{equation}
We are going to show that equation \eqref{e-14} has at least one real positive solution. Setting
$$\xi=\frac{1+\kappa^2}{1-\kappa^2}\,,$$
note that when $\kappa\in[\kappa_0,1)$ we have $\xi\in[\xi_0,+\infty)$, where
$$
\xi_0=\frac{1+\frac{\sqrt {D}-{B}}{\sqrt {D}+{B}}}{1-\frac{\sqrt {D}-{B}}{\sqrt {D}+{B}}}=\frac{\sqrt{D}}{B}.
$$
Consider the discriminant of the quadratic equation \eqref{e-14} as a function of $\xi$
$$
f(\xi)=4{B}^2\xi^2-4{D}.
$$
Then its derivative $f'(\xi)=8{B^2}\xi$ is always positive on $\xi\in[\xi_0,+\infty)$ indicating that $f(\xi)$ is an increasing function of $\xi\in[\xi_0,+\infty)$. Moreover, the direct check reveals that
$$f(\xi_0)=4{B}^2\xi_0^2-4{D}=0,$$
and hence $f(\xi)$ takes positive values at $\xi\in(\xi_0,+\infty)$. Applying the quadratic formula to equation \eqref{e-14} and taking into account that ${B}>0$, ${D}>0$ and $\xi>0$, we obtain that
\begin{equation}\label{e-15}
x=\frac{2{B}\xi\pm\sqrt{4{B}^2\xi^2-4{D}}}{2}={B}\xi\pm\sqrt{{B}^2\xi^2-{D}}
\end{equation}
yields two positive real solutions. Therefore, the value of $h$ such that $\RE h=-m_{\infty}(-0)$ and $\IM h$ equal to one of the values of $x$ from \eqref{e-15} is the one that makes $T_h$ an extremal dissipative quasi-self-adjoint extension of $\dA$. Substituting the expression for $\xi$ into \eqref{e-15} and simplifying yields \eqref{e-66-h}.

Clearly, if $\kappa\in(\kappa_0,1)$, then  \eqref{e-66-h} yields two distinct values of the boundary parameter $h$  and hence there are two different Schr\"odinger operators $T_h$ with von Neumann's parameter $\kappa$. In the case when $\kappa=\kappa_0$ the expression under the radical in \eqref{e-66-h} turns into zero and hence  there is only one value of $h$ yielding a unique extremal Schr\"odinger operator $T_h$ with von Neumann's parameter $\kappa=\kappa_0$.
\end{proof}
\begin{figure}[ht]
  \begin{center}
  \includegraphics[width=110mm]{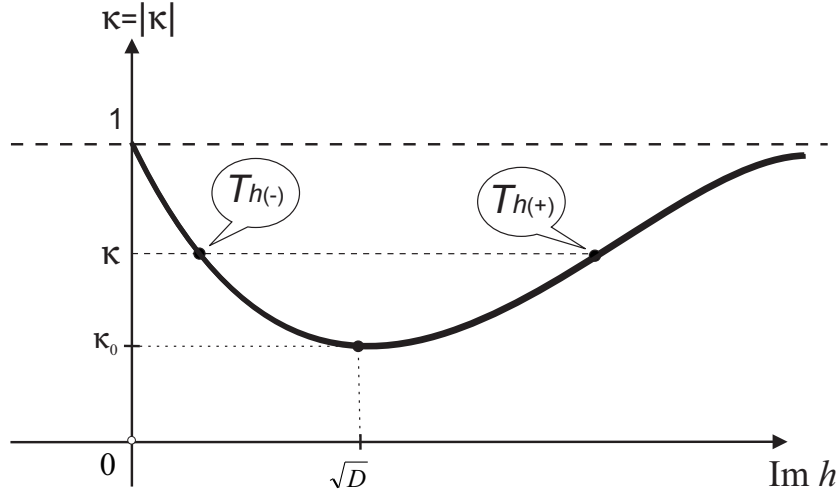}
  \caption{Two extremal operators $T_h$ with the same $\kappa$}\label{fig-1}
  \end{center}
\end{figure}

Figure \ref{fig-1} gives good visual illustration of Theorem  \ref{t-9-2}. There are two extremal operators $T_{h(-)}$ and $T_{h(+)}$ shown that share the same (modulus of) von Neumann's parameter $\kappa$. Here $h(-)$ and $h(+)$ are the values of the parameter $h$ in \eqref{e-66-h} that correspond to the $(-)$ and $(+)$ signs of the formula.

\section{On von Neumann's parameter of sectorial Schr\"odinger operator $T_h$}\label{s6}

The main goal of this section is (for a given $\beta\in(0,\pi/2)$) to explicitly describe the interval for the modulus of von Neumann's parameter $\kappa$ of the  operator $T_h$ such that $T_h$ is $\beta$-sectorial.   Throughout this section we keep the assumption of the previous section that the symmetric operator $\dA$ of the form \eqref{128} with deficiency indices (1,1) is nonnegative.

Consider $T_h$ of the form \eqref{131} and assume that $T_h$ is $\beta$-sectorial. Then according to Theorem \ref{t-8} we have that $\RE h >-m_{\infty}(-0)$ and (see \eqref{e10-45})
$$
\cot\beta=\frac{\RE h+m_{\infty}(-0)}{\IM h},
$$
or equivalently
\begin{equation}\label{e-48}
    \RE h+m_{\infty}(-0)=\cot\beta\cdot\IM h.
\end{equation}
As  in Section \ref{s5} we assume that $\Theta$  is an L-system of the form \eqref{149} with the main $\beta$-sectorial Schr\"odinger  operator $T_h$. Suppose also that $\mu$ is given by \eqref{e-42} and hence $T_h$  satisfies  Hypothesis \ref{setup} with real  $\kappa$ given by \eqref{e-41}. Once again, to simplify the calculation process we use our conventions described in \eqref{e-43-new}. Combining \eqref{e-44-new} with \eqref{e-48} yields
$$\begin{aligned}
&\kappa^2=\frac{(A+\RE h)^2+(\IM h-B)^2}{(A+\RE h)^2+(\IM h +B)^2}=\frac{(A-m+\cot\beta\cdot\IM h)^2+(\IM h -B)^2}{(A-m+\cot\beta\cdot\IM h)^2+(\IM h +B)^2}\\
&=\frac{(A-m)^2+2(A-m)\cot\beta\cdot\IM h+\cot^2\beta(\IM h)^2+(\IM h)^2-2B\IM h+B^2}{(A-m)^2+2(A-m)\cot\beta\cdot\IM h+\cot^2\beta(\IM h)^2+(\IM h)^2+2B\IM h+B^2}\\
&=\frac{(\cot^2\beta+1)(\IM h)^2+2((A-m)\cot\beta-B)\IM h+(A-m)^2+B^2}{(\cot^2\beta+1)(\IM h)^2+2((A-m)\cot\beta+B)\IM h+(A-m)^2+B^2}\\
&=\frac{(\IM h)^2+2\sin^2\beta((A-m)\cot\beta-B)\IM h+D\sin^2\beta}{(\IM h)^2+2\sin^2\beta((A-m)\cot\beta+B)\IM h+D\sin^2\beta}.
\end{aligned}
$$
As in Section \ref{s5} we consider $\kappa^2$ as a function $f$ of $H=\IM h$, that is
\begin{equation}\label{e-49}
f(H)=\kappa^2(H)=\frac{H^2+2\sin^2\beta((A-m)\cot\beta-B)H+D\sin^2\beta}{H^2+2\sin^2\beta((A-m)\cot\beta+B)H+D\sin^2\beta}.
\end{equation}
Taking the derivative of $f(H)$ in \eqref{e-49} we get
$$
f'(H)=\frac{4\sin\beta\cdot B(H^2-D\sin^2\beta)}{(H^2+2\sin^2\beta((A-m)\cot\beta+B)H+D\sin^2\beta)^2}.
$$
Setting $f'(H)=0$ and keeping in mind that $H=\IM h>0$ and $D>0$ we obtain a critical number
$$
H=\sin\beta\cdot\sqrt D,
$$
that can be checked to be a point of minimum of $f(H)$ for all $H>0$. Also, direct substitution gives
$$
f(\sin\beta\sqrt D)=\frac{\sqrt D+(A-m)\cos\beta-B\sin\beta}{\sqrt D+(A-m)\cos\beta+B\sin\beta}.
$$

Consequently,
$$
\frac{\sqrt D+(A-m)\cos\beta-B\sin\beta}{\sqrt D+(A-m)\cos\beta+B\sin\beta}\le\kappa^2<1,
$$
and hence
$$
    \sqrt{\frac{\sqrt D+(A-m)\cos\beta-B\sin\beta}{\sqrt D+(A-m)\cos\beta+B\sin\beta}}\le\kappa<1,
$$
or, after backward substitution and simplification, $\kappa_0\le\kappa<1$, where
\begin{equation}\label{e-50}
    \kappa_0=\sqrt{\frac{\sqrt D+(A-m)\cos\beta-B\sin\beta}{\sqrt D+(A-m)\cos\beta+B\sin\beta}}.
\end{equation}
In the above formula
\begin{equation}\label{e-51}
    \begin{aligned}
\sqrt D&=\sqrt{|m_\infty(i)|^2-2m_\infty(-0)\RE\,m_\infty(i)+m_\infty^2(-0)},\\
A-m&=\RE\,m_\infty(i)-m_\infty(-0),\qquad B=-\IM\,m_\infty(i).\\
    \end{aligned}
  \end{equation}
It is easy to see that when $\beta=\pi/2$, then  \eqref{e-50} and \eqref{e-51} match the previous (extremal) case in the formula \eqref{e-47}.
A sample graph of $\kappa$ as a function of $\IM h$ is shown in Figure \ref{fig-2}.

The following two theorems contain the main result of this section.
\begin{theorem}\label{t-10-1}
Under the condition \textrm{(1)} of Theorem \ref{t-8}, if a Schr\"odinger operator $T_h$ of the form \eqref{131} with the modulus of  von Neumann's parameter $\kappa$ is $\beta$-sectorial ($\beta\in(0,\pi/2)$), then $0<\kappa_0\le\kappa<1$, where $\kappa_0$ is given by \eqref{e-50} and \eqref{e-51}.
\end{theorem}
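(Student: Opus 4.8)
The plan is to observe that the two-sided bound is essentially forced by the extremal analysis already carried out in \eqref{e-48}--\eqref{e-51}, so that the only genuinely new assertion is the strict positivity $\kappa_0>0$. First I would record that the hypotheses put us precisely in the setting of that computation: since $T_h$ is $\beta$-sectorial, Theorem \ref{t-8}(4),(6) give \eqref{e-48}, and substituting $\RE h+m_\infty(-0)=\cot\beta\cdot\IM h$ into \eqref{e-44-new} produces the function $f(H)=\kappa^2(H)$ of \eqref{e-49} with $H=\IM h>0$. The derivative computation shows $f'(H)$ vanishes on $H>0$ only at $H=\sin\beta\sqrt D$, which is a point of minimum, and evaluating $f$ there yields the lower bound $\kappa_0$ of \eqref{e-50}--\eqref{e-51}; thus $\kappa\ge\kappa_0$. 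For the upper bound I would simply compare the numerator and denominator of \eqref{e-49}: they differ only in the sign of the term $2\sin^2\beta\,B\,H$, and since $B,H>0$ the numerator is strictly smaller, so $f(H)<1$ and hence $\kappa<1$.

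It then remains to prove $\kappa_0>0$, and this is the step that genuinely uses $\beta<\pi/2$ rather than the extremal case $\beta=\pi/2$ (where, by Lemma \ref{l-13}, the analogous quantity \eqref{e-47} can vanish). My approach is to interpret $\kappa_0$ as the von Neumann parameter of an \emph{actual} operator and invoke Lemma \ref{l-2}. Namely, the minimizing value $H^{*}=\sin\beta\sqrt D$ corresponds to a boundary value $h^{*}$ with $\IM h^{*}=H^{*}$ and, by \eqref{e-48}, $\RE h^{*}=-m_\infty(-0)+\cot\beta\cdot H^{*}=-m_\infty(-0)+\cos\beta\sqrt D$. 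Since $\beta\in(0,\pi/2)$ and $D>0$, we have $\cos\beta\sqrt D>0$, hence $\RE h^{*}>-m_\infty(-0)$, and Theorem \ref{t-8}(4) shows that $T_{h^{*}}$ is $\alpha$-sectorial. But the modulus of the von Neumann parameter of $T_{h^{*}}$ equals $\sqrt{f(H^{*})}=\kappa_0$, so Lemma \ref{l-2} forces $\kappa_0\ne0$, i.e. $\kappa_0>0$.

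I expect the only real subtlety to be this last step: the extremum computation by itself gives only $\kappa_0\ge0$, and a direct algebraic verification that the numerator of \eqref{e-50} is strictly positive would require knowing $\RE m_\infty(i)\ge m_\infty(-0)$ (equivalently $A-m\ge0$), which is not obvious a priori. The Lemma \ref{l-2} argument sidesteps this by exhibiting a concrete $\beta$-sectorial operator that realizes $\kappa_0$. Alternatively, one could first establish $A-m\ge0$ by applying Lemma \ref{l-2} and Theorem \ref{t-8}(4) to the operator $T_{-m_\infty(i)}$, which has $\kappa=0$ and $\RE h=-\RE m_\infty(i)$ and therefore \emph{cannot} be sectorial, forcing $-\RE m_\infty(i)\le-m_\infty(-0)$; and then use the identity $D=\bigl(B\sin\beta-(A-m)\cos\beta\bigr)^2+\bigl(B\cos\beta+(A-m)\sin\beta\bigr)^2$, which gives $\sqrt D>B\sin\beta-(A-m)\cos\beta$ and hence $\kappa_0>0$ as soon as $A-m\ge0$, since then $B\cos\beta+(A-m)\sin\beta\ge B\cos\beta>0$.
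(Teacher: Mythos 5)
Your proof is correct and follows essentially the same route as the paper: the two-sided bound $\kappa_0\le\kappa<1$ is exactly the Section \ref{s6} minimization of $f(H)=\kappa^2(H)$ in \eqref{e-49} (resting on parts (4) and (6) of Theorem \ref{t-8}), and the strict positivity $\kappa_0>0$ is obtained from Lemma \ref{l-2}, which is precisely what the paper's proof cites. Your second paragraph --- realizing $\kappa_0$ as the von Neumann parameter of the concrete sectorial operator $T_{h^*}$ with $\IM h^*=\sin\beta\sqrt D$ and $\RE h^*=-m_\infty(-0)+\cos\beta\sqrt D>-m_\infty(-0)$ --- merely makes explicit the attainment step that the paper's terse proof leaves implicit, and your alternative argument via $\RE m_\infty(i)\ge m_\infty(-0)$ is in substance the paper's later Lemma \ref{l-16}.
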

\begin{proof}
The proof follows from part (4) of Theorem \ref{t-8} and the reasoning of this section. We just note that $0<\kappa_0$ because of the fact that a $\beta$-sectorial ($\beta\in(0,\pi/2)$) operator cannot have the von Neumann parameter equal to zero (see Lemma \ref{l-2}).
\end{proof}

Now we will state and prove an inverse version of Theorem \ref{t-10-1} that is the analog of Theorem \ref{t-9-2} from Section \ref{s5}.
\begin{theorem}\label{t-15}
Let $\dA$ be the same as in Theorem \ref{t-10-1}. Given $\beta\in(0,\pi/2)$ and $0<\kappa_0\le\kappa<1$, where $\kappa_0$ is given by \eqref{e-50}, \eqref{e-51}, there exist two (one if $\kappa=\kappa_0$)  $\beta$-sectorial dissipative Schr\"odinger operators $T_h$, ($\dA \subset T_h \subset \dA^*$), of the form \eqref{131} such that the modulus of their von Neumann's parameters equals  $\kappa$. Moreover, the boundary value(s) of $h$ are given by
\begin{equation}\label{e-74-h}
    h=-m+\sin^2\beta(\cot\beta+i)\left(\xi B-(A-m)\cot\beta\pm\sqrt{E}\right),
\end{equation}
where  $m$, $B$ and $C$ are given by \eqref{e-43-new},  $\xi=\frac{1+\kappa^2}{1-\kappa^2}$ and
\begin{equation}\label{e-75-E}
E=\left[(A-m)\cot\beta-\xi B\right]^2-{D}\csc^2\beta.
\end{equation}
 \end{theorem}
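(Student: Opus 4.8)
The plan is to mirror the strategy of the proof of Theorem~\ref{t-9-2}, replacing the extremality normalization $\RE h=-m$ with the $\beta$-sectoriality constraint. First I would observe that equation \eqref{e-44-new} holds verbatim for \emph{any} Schr\"odinger operator $T_h$ whose modulus of von Neumann's parameter equals $\kappa$, irrespective of sectoriality. The point is then to impose, via Theorem~\ref{t-8}(4),(6), the defining relation \eqref{e-48} of a $\beta$-sectorial operator, namely $\RE h+m=\cot\beta\cdot\IM h$. Substituting this into \eqref{e-44-new} is precisely the computation already carried out in Section~\ref{s6}, and it produces the rational expression \eqref{e-49} for $\kappa^2$ as a function of $H=\IM h$.

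Next I would treat \eqref{e-49} as an equation to be solved for $H$. Writing $\xi=\frac{1+\kappa^2}{1-\kappa^2}$ and cross-multiplying, the leading and constant terms combine to leave coefficients $1$ and $D\sin^2\beta$ respectively, while the two linear contributions merge into $2\sin^2\beta[(A-m)\cot\beta-\xi B]$, yielding the quadratic
\begin{equation*}
H^2+2\sin^2\beta\bigl[(A-m)\cot\beta-\xi B\bigr]H+D\sin^2\beta=0.
\end{equation*}
Its discriminant, up to the positive factor $4\sin^4\beta$, is exactly $E$ as defined in \eqref{e-75-E}. Solving by the quadratic formula gives $H=\sin^2\beta\bigl(\xi B-(A-m)\cot\beta\pm\sqrt E\,\bigr)$, and recovering $h=-m+(\cot\beta+i)H$ through \eqref{e-48} reproduces the stated formula \eqref{e-74-h}.

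The heart of the argument — and the main obstacle — is the discriminant analysis showing that $E\ge 0$ exactly when $\kappa\ge\kappa_0$. I would set $p=(A-m)\cot\beta-\xi B$ and compute the value $\xi_0$ corresponding to $\kappa_0$ from \eqref{e-50}; a direct simplification gives $\xi_0=\frac{\sqrt D+(A-m)\cos\beta}{B\sin\beta}$, whence $p_0=-\sqrt D/\sin\beta$ and therefore $E=0$ at $\kappa=\kappa_0$. Since $\xi$ is strictly increasing in $\kappa$ on $[0,1)$ and $B>0$, increasing $\kappa$ beyond $\kappa_0$ forces $p$ to decrease below $p_0<0$, so $p^2$ — and hence $E=p^2-D\csc^2\beta$ — strictly increases; thus $E>0$ for $\kappa>\kappa_0$. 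This delivers two real roots there and a single (double) root when $\kappa=\kappa_0$, giving the claimed count.

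Finally I would verify the remaining requirements. Both roots are positive: by Vieta's formulas their product $D\sin^2\beta>0$ and their sum $-2\sin^2\beta\,p>0$ (because $p<0$), so $\IM h=H>0$ for either sign. Positivity of $H$ together with $\cot\beta>0$ forces $\RE h=-m+\cot\beta\cdot H>-m$, so by Theorem~\ref{t-8}(4) the operator $T_h$ is sectorial, and by \eqref{e10-45} its exact angle satisfies $\tan\alpha=\IM h/(\RE h+m)=\tan\beta$, i.e.\ $\alpha=\beta$. This confirms that each constructed $T_h$ is genuinely $\beta$-sectorial and completes the proof.
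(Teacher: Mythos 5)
Your proposal is correct and follows essentially the same route as the paper's proof: both substitute the sectoriality relation \eqref{e-48} into \eqref{e-44-new} to obtain \eqref{e-49}, reduce to a quadratic in $\IM h$ via the substitution $\xi=\frac{1+\kappa^2}{1-\kappa^2}$ (your equation is the paper's \eqref{e-77-new} multiplied by $\sin^2\beta$), show the discriminant vanishes at $\xi_0$ and is positive for $\xi>\xi_0$, and solve by the quadratic formula to recover \eqref{e-74-h}. Your minor refinements --- monotonicity of $p=(A-m)\cot\beta-\xi B$ in place of the paper's derivative computation, Vieta's formulas for positivity of the roots, and the explicit check that the exact angle of sectoriality equals $\beta$ --- only streamline or make explicit what the paper's argument already contains.
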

\begin{proof}
We are going to use a similar to the proof of Theorem \ref{t-9-2} method to confirm \eqref{e-74-h}. We notice that the rational function in formula \eqref{e-49} resembles the one in \eqref{e-14-old}. Therefore, the following formal change of coefficients
\begin{equation}\label{e-74-b}
B_1=\sin^2\beta((A-m)\cot\beta-B),\quad B_2=\sin^2\beta((A-m)\cot\beta+B),
\end{equation}
leads to
\begin{equation}\label{e-75-new}
\kappa^2=\frac{{x}^2+2{B_1} {x}+{D\sin^2\beta}}{{x}^2+2{B_2} {x}+{D\sin^2\beta}},
\end{equation}
where ${x}=\IM h$. Modifying \eqref{e-75-new} brings us to the quadratic equation
$$
(\kappa^2-1)x^2+2(\kappa^2 B_2-B_1)x+(\kappa^2-1)D\sin^2\beta=0.
$$
Taking into account that $\kappa^2 B_2-B_1=\sin^2\beta[(\kappa^2-1)(A-m)\cot\beta+(\kappa^2+1)B]$ we obtain
$$
x^2+2\sin^2\beta\left[(A-m)\cot\beta+\frac{\kappa^2+1}{\kappa^2-1}B\right]x+D\sin^2\beta=0.
$$
Setting $$\xi=\frac{1+\kappa^2}{1-\kappa^2}\,,$$   note that when $\kappa\in[\kappa_0,1)$ we have $\xi\in[\xi_0,+\infty)$, where (see \eqref{e-50})
$$
\xi_0=\frac{1+\kappa^2_0}{1-\kappa^2_0}=\frac{1+\frac{\sqrt D+(A-m)\cos\beta-B\sin\beta}{\sqrt D+(A-m)\cos\beta+B\sin\beta}}{1-\frac{\sqrt D+(A-m)\cos\beta-B\sin\beta}{\sqrt D+(A-m)\cos\beta+B\sin\beta}}=\frac{\sqrt{D}+(A-m)\cos\beta}{B\sin\beta}.
$$
It is easy to see that if $\beta=\pi/2$, then the value of $\xi_0$ above matches the corresponding value of $\xi_0$ in the proof of Theorem \ref{t-9-2} written for the extremal case. Furthermore,  the above quadratic equation transforms into (see \eqref{e-74-b})
\begin{equation}\label{e-77-new}
(\csc^2\beta)    x^2+2\left[(A-m)\cot\beta-\xi B\right]x+D=0.
\end{equation}
Consider the discriminant of the quadratic equation \eqref{e-77-new} as a function of $\xi$
$$
f(\xi)=4\left[(A-m)\cot\beta-\xi B\right]^2-4{D}\csc^2\beta.
$$
Then its derivative $f'(\xi)=8{B^2}\xi-8B(A-m)\cot\beta$ gives rise to  a critical point $\xi_c=\frac{A-m}{B}\cot\beta$ that is clearly smaller than $\xi_0$, that is $\xi_c<\xi_0$. Consequently, $f'(\xi)$ is always positive on $\xi\in[\xi_0,+\infty)$ indicating that $f(\xi)$ is an increasing function of $\xi\in[\xi_0,+\infty)$. Moreover, the direct check reveals that
$$f(\xi_0)=4{\left[(A-m)\cot\beta-\xi_0 B\right]}^2-4{D}(\csc^2\beta)=4\left[\frac{-\sqrt{D}}{\sin\beta}\right]^2-4D\csc^2\beta=0,$$
and hence $f(\xi)$ takes positive values on $\xi\in(\xi_0,+\infty)$. Applying the quadratic formula to equation \eqref{e-77-new} and taking into account that ${B}>0$, ${D}>0$ and $\xi>0$, we obtain that
\begin{equation}\label{e-78-new}
    \begin{aligned}
x&=\frac{2\left[\xi B-(A-m)\cot\beta\right]\pm\sqrt{4\left[(A-m)\cot\beta-\xi B\right]^2-4{D}\csc^2\beta}}{2\csc^2\beta}\\
&=\sin^2\beta\left(\xi B-(A-m)\cot\beta\pm\sqrt{\left[(A-m)\cot\beta-\xi B\right]^2-{D}\csc^2\beta}\right)\\
&=\sin^2\beta\left(\xi B-(A-m)\cot\beta\pm\sqrt{E}\right)
    \end{aligned}
\end{equation}
yields two positive real solutions. Therefore, the value of $h$ such that $\RE h=-m_{\infty}(-0)+(\cot\beta)x$, where  $x=\IM h$ is given by \eqref{e-78-new},  defines $\beta$-sectorial dissipative Schr\"odinger operators $T_h$. Substituting the expression for $\xi$ into \eqref{e-78-new} and simplifying yields \eqref{e-74-h}.

    It can be clearly seen from  \eqref{e-74-h}  that if $\kappa\in(\kappa_0,1)$, then there are two distinct values of the boundary parameter $h$ and hence two different $\beta$-sectorial  Schr\"odinger operators $T_h$ with von Neumann's parameter. In the case when $\kappa=\kappa_0$ there is only one $\beta$-sectorial  Schr\"odinger operators $T_h$ with this property.
\end{proof}
\begin{figure}[ht]
  \begin{center}
  \includegraphics[width=110mm]{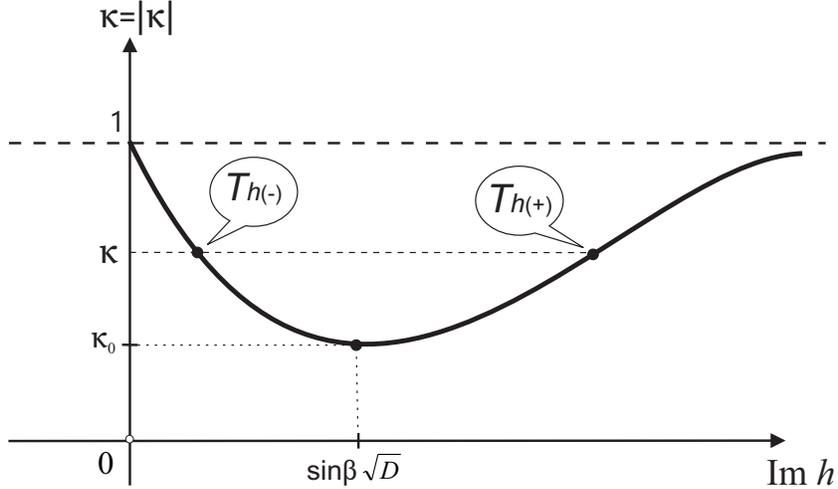}
  \caption{Two $\beta$-sectorial operators $T_h$ with the same $\kappa$}\label{fig-2}
  \end{center}
\end{figure}
Figure \ref{fig-2} gives good visualization  of Theorem  \ref{t-15}. It depicts two $\beta$-sectorial operators $T_{h(-)}$ and $T_{h(+)}$ shown that share the same (modulus of) von Neumann's parameter $\kappa$. Here $h(-)$ and $h(+)$ are the values of the parameter $h$ in \eqref{e-74-h} that correspond to the $(-)$ and $(+)$ signs of the formula.

We finalize the section with a  lemma that contains a very useful property of the function $m_\infty(z)$ related to a nonnegative symmetric operator $\dA$ of the form \eqref{128}.
\begin{lemma}\label{l-16}
Let $\dA$ be a nonnegative symmetric Schr\"odinger operator of the form \eqref{128} with deficiency indices $(1, 1)$ and locally summable potential in $\calH=L^2(\ell, \infty).$ If the corresponding Weyl function $m_\infty(z)$ has the property $m_{\infty}(-0)<\infty$, then
\begin{equation}\label{e-75}
    \RE m_\infty(i)\ge m_\infty(-0).
\end{equation}
\end{lemma}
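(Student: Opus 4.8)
The plan is to exploit nonnegativity of $\dA$ through the Herglotz-Nevanlinna structure of the Weyl function: since the spectra of all self-adjoint extensions of $\dA$ lie in $[0,\infty)$, the representing measure of (the negative of) $m_\infty$ is supported on $[0,\infty)$, and then both quantities in \eqref{e-75} can be read off directly from the integral representation.

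First I would pass to $M(z)=-m_\infty(z)$. The normalization \eqref{e-43-new} gives $\IM m_\infty(i)=-B<0$, so $M$ maps $\dC_+$ into $\dC_+$, i.e. $M$ is Herglotz; and since $\dA\ge 0$ its self-adjoint extensions have spectrum in $[0,\infty)$, so $M$ continues analytically to $\dC\setminus[0,\infty)$ and its representing measure is carried by $[0,\infty)$. Thus
\begin{equation*}
M(z)=\gamma+\beta z+\int_{[0,\infty)}\left(\frac{1}{t-z}-\frac{t}{1+t^2}\right)d\mu(t),\qquad \gamma\in\dR,\ \beta\ge 0,
\end{equation*}
with $\mu\ge 0$ supported on $[0,\infty)$. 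Next I would evaluate $M$ at the two relevant points. Because $\RE\frac{1}{t-i}=\frac{t}{t^2+1}$, the integrand has vanishing real part at $z=i$ and the linear term is purely imaginary there, so $\RE M(i)=\gamma$. Letting instead $z=x\to 0^-$ along the negative axis, where $M$ is real, the linear term vanishes in the limit while the assumption $m_\infty(-0)<\infty$ excludes an atom of $\mu$ at $0$ and secures convergence, yielding
\begin{equation*}
M(-0)=\gamma+\int_{(0,\infty)}\left(\frac{1}{t}-\frac{t}{1+t^2}\right)d\mu(t)=\gamma+\int_{(0,\infty)}\frac{d\mu(t)}{t(1+t^2)}.
\end{equation*}

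The decisive observation is that, $\mu$ being supported on $(0,\infty)$, the integrand $\frac{1}{t(1+t^2)}$ is positive on the whole support, so the integral is nonnegative and $M(-0)\ge\gamma=\RE M(i)$. Rewriting in terms of $m_\infty=-M$ gives $m_\infty(-0)\le\RE m_\infty(i)$, which is precisely \eqref{e-75}. The only genuinely nontrivial ingredient is the structural fact that the representing measure of $M$ sits on $[0,\infty)$ — this is exactly where $\dA\ge 0$ is used — together with checking that $m_\infty(-0)<\infty$ forces convergence of the resulting integral near $t=0$; everything else is a routine reading of real parts and boundary limits off the Nevanlinna representation. (As a consistency check, equality in \eqref{e-75} corresponds to the vanishing of this integral, matching the characterization $\kappa_0=0\iff\RE m_\infty(i)=m_\infty(-0)$ from Lemma \ref{l-13}.)
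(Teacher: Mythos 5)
You prove the lemma by a genuinely different route from the paper, and your argument is essentially correct. The paper's own proof is indirect and stays entirely inside its sectorial-extension machinery: assuming $\RE m_\infty(i)<m_\infty(-0)$ (i.e.\ $A<m$ in the notation \eqref{e-43-new}), it chooses the angle $\beta_0=\arccos\bigl((m-A)/\sqrt{D}\bigr)\in(0,\pi/2)$ as in \eqref{e-106-beta}, invokes Theorem \ref{t-15} to produce a $\beta_0$-sectorial operator $T_h$ whose minimal von Neumann parameter is $\kappa_0$ from \eqref{e-50}, computes via \eqref{e-83-sin} that this $\kappa_0$ equals $0$, and contradicts Lemma \ref{l-2}. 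Your proof instead reads the inequality directly off the Nevanlinna representation of $M=-m_\infty$: the difference $\RE m_\infty(i)-m_\infty(-0)$ is exhibited as $\int_{(0,\infty)}\frac{d\mu(t)}{t(1+t^2)}\ge 0$. This is more elementary (no extension theory beyond standard facts about Weyl functions) and more informative, since it gives an exact formula for the gap and hence the equality case, consistent with Lemma \ref{l-13}; the paper's argument, by contrast, needs no information about where the representing measure of $m_\infty$ lives, only results it has already established for other purposes.

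One justification in your write-up is false as stated and must be replaced, though the fact you need is true, so the gap is repairable rather than fatal. You assert that "the spectra of all self-adjoint extensions of $\dA$ lie in $[0,\infty)$." A nonnegative symmetric operator with deficiency indices $(1,1)$ does have self-adjoint extensions with a negative eigenvalue: by item (3) of the paper's own Theorem \ref{t-8} applied to real $h$, the self-adjoint extension $y'(\ell)=hy(\ell)$ of the form \eqref{131} is nonnegative only when $h\ge-m_\infty(-0)$, and for real $h<-m_\infty(-0)$ it is not. What your argument actually requires is weaker: the representing measure of $M$ is carried by the spectrum of the Dirichlet realization $T_\infty$ (boundary condition $y(\ell)=0$), because $m_\infty$ is analytic and real precisely off that spectrum; and $T_\infty$ is the Friedrichs extension of $\dA$, hence nonnegative since $\dA\ge 0$. (Similarly, the Herglotz property of $-m_\infty$ is a classical fact about Weyl functions with this normalization, not a consequence of the single value $\IM m_\infty(i)<0$.) With these substitutions the remainder of your computation --- $\RE M(i)=\gamma$, the monotone limit defining $M(-0)$, the exclusion of an atom at $0$ and convergence of the integral from $m_\infty(-0)<\infty$, and the positivity of $1/\bigl(t(1+t^2)\bigr)$ on $(0,\infty)$ --- is sound and yields \eqref{e-75}.
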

\begin{proof}
We stick to the notation that we set in \eqref{e-43-new}, that is $m_\infty(i)=A-iB$, $B>0$, $m_\infty(-0)=m$, $C=(A-m)^2$, and $D=C+B^2>0$. We need to show that $A\ge m$. Assume the contrary, $A< m$. Since $m<\infty$, for every $\beta\in(0,\pi/2)$ there exists a $\beta$-sectorial operator $T_h$ of the form \eqref{131} (see \cite{ABT}, \cite{T87}). Take a $\beta_0$ such that
\begin{equation}\label{e-106-beta}
\cos\beta_0=\frac{m-A}{\sqrt{D}} \quad\textrm{ or }\quad \beta_0=\arccos\left( \frac{m-A}{\sqrt{D}}\right).
\end{equation}
Since under our assumption $A< m$, then $\beta_0\in(0,\pi/2)$ and hence there is a $\beta_0$-sectorial operator $T_h$ of the form \eqref{131}. Moreover, according to Theorem \ref{t-15} the boundary value $h$ of this $T_h$ is determined by \eqref{e-74-h} and its von Neumann's parameter is $\kappa_0$ given by  \eqref{e-50}. Using the basic trig identity and \eqref{e-43-new} we get
\begin{equation}\label{e-83-sin}
\sin\beta_0=\sqrt{1-\cos^2\beta_0}=\sqrt{1-\frac{C}{D}}=\frac{\sqrt{D-C}}{\sqrt{D}}=\frac{\sqrt{B^2}}{\sqrt{D}}=\frac{B}{\sqrt{D}}.
\end{equation}
Substituting $\cos\beta_0$ and $\sin\beta_0$ from \eqref{e-106-beta} and \eqref{e-83-sin} into \eqref{e-50} reveals
$$
\kappa_0^2=\frac{\sqrt{D}-\sqrt{C}\cdot\frac{\sqrt{C}}{\sqrt{D}}-B\cdot\frac{{B}}{\sqrt{D}}}{\sqrt{D}-\sqrt{C}\cdot\frac{\sqrt{C}}{\sqrt{D}}+B\cdot\frac{{B}}{\sqrt{D}}}
=\frac{D-C-B^2}{D+B^2-C}=\frac{D-D}{D+B^2-C}=0,
$$
or $\kappa_0=0$. Hence under our assumption we have the existence of a $\beta_0$-sectorial operator $T_h$ with von Neumann's parameter $\kappa_0=0$. This contradicts Lemma \ref{l-2} and thus our assumption that $A< m$ is wrong. Consequently, $A\ge m$.
\end{proof}

\section{c-Entropy of an  L-system and minimal  dissipation coefficient}\label{s7}

We begin with introducing a concept of the c-entropy of an L-system.
\begin{definition}
Let $\Theta$ be an L-system of the form \eqref{e-62}. The quantity
\begin{equation}\label{e-80-entropy-def}
    \calS=-\ln (|W_\Theta(-i)|),
\end{equation}
where $W_\Theta(z)$ is the transfer function of $\Theta$, is called the \textbf{coupling entropy} (or \textbf{c-entropy}) of the L-system $\Theta$.
\end{definition}
There is an alternative operator-theoretic way to define the c-entropy. If $T$ is the main operator of the L-system  $\Theta$ and $\kappa$ is  von Neumann's parameter of $T$ in some basis $g_\pm$, then (see  \cite{BMkT-2}) $|W_\Theta(-i)|=|\kappa|$ and hence
\begin{equation}\label{e-70-entropy}
    \calS=-\ln (|W_\Theta(-i)|)=-\ln(|\kappa|).
\end{equation}

We emphasize that c-entropy defined by formula \eqref{e-70-entropy} does not depend on the choice of deficiency basis $g_\pm$ and moreover is an additive function with respect to the coupling of L-systems (see \cite{BMkT-2}).
 Note that if, in addition,  the point $z=i$ belongs to $\rho(T)$, then we also have that
\begin{equation}\label{e-80-entropy}
     \calS=\ln (|W_\Theta(i)|)=\ln (1/|\kappa|)=-\ln(|\kappa|).
\end{equation}
This follows from the known (see \cite{ABT}) property of the transfer functions for L-systems  that states that $W_\Theta(z)\overline{W_\Theta(\bar z)}=1$ and  the fact that $|W_\Theta(i)|=1/|\kappa|$ (see \cite{BMkT}).

Next we pose for the following optimization  problem associated with the c-entropy of Schr\"o-inger L-systems:
\begin{itemize}
      \item
\textit{Describe L-systems with the Schr\"odinger operator with a given c-entropy and minimal  dissipation coefficient}.
\end{itemize}

The main goal of this section is to provide a solution of the problem above for the L-systems whose impedance functions belong to the (generalized) Donoghue classes $\sM$, $\sM_\kappa$, and $\sM_\kappa^{-1}$.

Let $\Theta_{\mu,h}$ be a Schr\"odinger L-system one of  \eqref{149} based upon the symmetric operator $\dA$ of the form \eqref{128} with corresponding Weyl function  $m_\infty(z)$. Also, let $\kappa$ denote the modulus of the von Neumann parameter of the main operator $T_h$ of the L-system $\Theta_{\mu,h}$. Suppose that the impedance function $V_{\Theta_{\mu,h}}(z)$ given by \eqref{1501},  belongs to a generalized Donoghue class. Then
\begin{equation}\label{e-83-a}
V_{\Theta_{\mu,h}}(i)=ai,
\end{equation}
for some $a>0$. Representation \eqref{e-83-a} has been justified in the proof of Theorem \ref{t-7} for $0<a<1$ (see \eqref{e-43-V}) and can be verified for $a>1$ (cf. Theorem \ref{t-7-1}). Depending on the value of $a$ in \eqref{e-83-a} we (see \eqref{e-45-kappa-1} and \eqref{e-45-kappa-2}) have
\begin{equation}\label{e-57-k}
    \kappa=\left\{
               \begin{array}{ll}
                 \frac{1-a}{1+a}, & \hbox{$0<a<1$;} \\
                 0, & \hbox{$a=1$;} \\
                 \frac{a-1}{1+a}, & \hbox{$a>1$.}
               \end{array}
             \right.
\end{equation}
In order to address the problem above we point out that in accordance with \eqref{e-70-entropy} the knowledge of c-entropy fixes the value of $\kappa=|\kappa|$ and hence of $a$ as well.
 Moreover, by Theorem  \ref{t-7} for the coefficient of dissipation $\IM h$ we have the explicit representation
\begin{equation}\label{e-58-dis}
    \IM h(\mu)=-\frac{ad^3+ad(c+\mu)^2}{a^2d^2+(c+\mu)^2},
\end{equation}
where $c=\RE m_\infty(i)$,  $d=\IM m_\infty(i)$, and $\mu$ is the parameter that determines the L-system $\Theta_{\mu,h}$. Thus, to minimize $\IM h$ for a given $a>0$ we seek the corresponding critical value of $\mu$. Taking the derivative of $\IM h$ as a function of $\mu$, simplifying, and setting it equal to zero yields
$$
\frac{d}{d\mu}\left(\IM h \right)=\frac{2ad^3(c+\mu)(1-a^2)}{(a^2d^2+(c+\mu)^2)^2}=0.
$$
Therefore,
$$\mu=-c=-\RE m_\infty(i)$$
is the only critical point of $\IM h(\mu)$. It is easy to check that this critical value of $\mu$ gives the minimum for $\IM h$ whenever $a>1$ and the maximum whenever $0<a<1$. Also, clearly $\IM h\equiv -d=-\IM m_\infty(i)$ if $a=1$. Moreover, as we have shown in the proof of Theorem \ref{t-7}, in this case
\begin{equation}\label{e-59}
\IM h=-(1/a)\IM m_\infty(i).
\end{equation}
This reasoning  leads  to the following result.
\begin{theorem}\label{t-10}
Assume that  an L-system  $\Theta_{\mu,h}$ of the form  \eqref{149} with a  main operator $T_h$ has c-entropy $\calS$.  Then, under the constraint that the impedance function $V_{\Theta_{\mu,h}}(z)$ belongs to the generalized Donoghue class $\sM_{\kappa}^{-1}$ with $\kappa=e^{-\calS}$, the L-system $\Theta_{\mu,h}$   has the minimum coefficient of dissipation
\begin{equation}\label{e-59-dis}
\calD_{min}=\IM h=-\left(\frac{e^{\calS}-1}{e^{\calS}+1}\right)\IM m_\infty(i)=-\tanh \left(\frac{\calS}{2}\right)\IM m_\infty(i).
\end{equation}
In this case $\Theta_{\mu,h}$ is uniquely determined by the parameters
\begin{equation}\label{e-61-minsys}
    h=-\RE m_\infty(i)-i\tanh \left(\frac{\calS}{2}\right)\IM m_\infty(i)\quad and \quad \mu=-\RE m_\infty(i).
\end{equation}
\end{theorem}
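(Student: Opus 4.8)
The plan is to obtain the statement by specializing the explicit dissipation formula \eqref{e-58-dis} to the minimizing value of $\mu$ isolated in the paragraph preceding the theorem, and then converting the class parameter $a$ into the c-entropy $\calS$ through hyperbolic functions. First I would record the dictionary between $\calS$, the von Neumann parameter $\kappa$, and $a$. Since $V_{\Theta_{\mu,h}}(z)\in\sM_\kappa^{-1}$ we are in the regime $a>1$, so by \eqref{e-45-kappa-2} we have $\kappa=\frac{a-1}{a+1}$, equivalently $a=\frac{1+\kappa}{1-\kappa}$. The c-entropy relation \eqref{e-70-entropy} fixes $\kappa=e^{-\calS}$, whence
\begin{equation*}
a=\frac{1+e^{-\calS}}{1-e^{-\calS}}=\coth\!\left(\frac{\calS}{2}\right),\qquad \frac1a=\tanh\!\left(\frac{\calS}{2}\right)=\frac{e^{\calS}-1}{e^{\calS}+1}.
\end{equation*}

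Next I would invoke the optimization already carried out before the statement: differentiating \eqref{e-58-dis} in $\mu$ produces the single critical point $\mu=-c=-\RE m_\infty(i)$, which for $a>1$ is a minimum of $\IM h(\mu)$. Substituting $\mu=-c$ into \eqref{e-58-dis} annihilates every $(c+\mu)$ term and collapses the expression to $\IM h=-d/a=-\frac1a\,\IM m_\infty(i)$. Feeding in the hyperbolic identity above gives precisely \eqref{e-59-dis}, namely $\calD_{min}=-\tanh(\calS/2)\,\IM m_\infty(i)$; this is positive because $\IM m_\infty(i)<0$, consistent with dissipativity.

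For the uniqueness clause I would observe that the minimizing value $\mu=-\RE m_\infty(i)$ is exactly the degenerate branch $\mu=\RE h$ isolated in the proof of Theorem \ref{t-7} and carried over to $a>1$ in Theorem \ref{t-7-1}, where it produced the boundary value \eqref{e-36-h} with $\RE h=-\RE m_\infty(i)$ and $\IM h=-\frac1a\,\IM m_\infty(i)$. Inserting $1/a=\tanh(\calS/2)$ turns this into the $h$ displayed in \eqref{e-61-minsys}. Because a Schr\"odinger L-system $\Theta_{\mu,h}$ is pinned down by the pair $(\mu,h)$ through the state-space formula \eqref{137}, fixing both parameters determines the extremal system uniquely.

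The mathematics here is light — the heavy computation is already contained in \eqref{e-58-dis} and in the derivative calculation before the theorem — so the genuine care lies in bookkeeping. One must verify that the minimizing $\mu=-c$ lands on the branch \eqref{e-36-h} rather than the generic branch \eqref{e-37-h} (they in fact coincide in the limit $\mu\to-c$, since \eqref{e-47-x} forces $\RE h\to-c$ there), and one must track the sign $\IM m_\infty(i)<0$ throughout so that $\calD_{min}$ emerges positive.
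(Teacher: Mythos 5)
Your proposal is correct and follows essentially the same route as the paper's proof: both convert $\calS$ to $\kappa=e^{-\calS}$ and then to $a=\frac{1+\kappa}{1-\kappa}$ via \eqref{e-57-k}, invoke the pre-theorem minimization of \eqref{e-58-dis} at $\mu=-\RE m_\infty(i)$ for $a>1$, and identify $h$ through \eqref{e-36-h} and \eqref{e-59} with $1/a=\tanh(\calS/2)$. Your added check that the two branches \eqref{e-36-h} and \eqref{e-37-h} coincide at $\mu=-c$ is a nice point of bookkeeping that the paper leaves implicit, but it does not change the argument.
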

\begin{proof}
 Since out L-system $\Theta_{\mu,h}$ has a fixed given c-entropy $\calS$, then the modulus of von Neumann's parameter $\kappa$ of its main operator $T_h$ is given by $\kappa=e^{-\calS}$. Moreover, since $V_{\Theta_{\mu,h}}(z)\in\sM_\kappa^{-1}$, then $V_{\Theta_{\mu,h}}(i)=ai$ for some $a>1$. Using this value of $\kappa$ and formula \eqref{e-57-k} we obtain
$$
a=\frac{1+\kappa}{1-\kappa}>1.
$$
We have
$$
\frac{1}{a}=\frac{1-e^{-\calS}}{1+e^{-\calS}}=\frac{e^{\calS}-1}{e^{\calS}+1}=\tanh \left(\frac{\calS}{2}\right).
$$
 The rest follows from the fact that $\mu=-c=-\RE m_\infty(i)$ is the point of minimum of $\IM h$ if $a>1$, formulas \eqref{e-36-h} (in the context of Theorem \ref{t-7-1}) and \eqref{e-59}.
\end{proof}
The L-system $\Theta_{\mu,h}$ with the minimum coefficient of dissipation in Theorem \ref{t-10} can be written explicitly. In particular its state space operator $\bA_{\mu,h}$  can be described by  formulas \eqref{137} with the values of $h$ and $\mu$ given by \eqref{e-61-minsys}. That is,
\begin{equation}\label{e-88-new}
\begin{split}
&\bA_{\mu, h}\, y=-y^{\prime\prime}+q(x)y+\frac {y^{\prime}(\ell)-hy(\ell)}{\RE m_\infty(i)+h}\,[\delta^{\prime}(x-\ell)-\RE m_\infty(i) \delta (x-\ell)], \\
&\bA^*_{\mu, h}\, y=-y^{\prime\prime}+q(x)y+\frac {y^{\prime}(\ell)-\overline hy(\ell)}{\RE m_\infty(i)+\overline h}\,[\delta^{\prime}(x-\ell)-\RE m_\infty(i) \delta(x-\ell)],
\end{split}
\end{equation}
where $$h=-\RE m_\infty(i)-i\tanh \left(\frac{\calS}{2}\right)\IM m_\infty(i).$$

An analogues result similar to Theorem \ref{t-10} takes place whenever $0<a<1$.
\begin{theorem}\label{t-11}
Assume that  an L-system  $\Theta_{\mu,h}$ of the form  \eqref{149} with a  main operator $T_h$ has c-entropy $\calS$.  Then, under the constraint  that the impedance function $V_{\Theta_{\mu,h}}(z)$ belongs to the generalized Donoghue class $\sM_{\kappa}$ with $\kappa=e^{-\calS}$, the L-system $\Theta_{\mu,h}$  has the minimum coefficient of dissipation
$$
\calD_{min}=-\tanh \left(\frac{\calS}{2}\right)\IM m_\infty(i).
$$
In this case, $\Theta_{\mu,h}$ is uniquely determined by the parameters
\begin{equation}\label{e-62-minsys}
    h=-\RE m_\infty(i)-i\tanh \left(\frac{\calS}{2}\right)\IM m_\infty(i)\quad and \quad \mu=\infty.
\end{equation}
\end{theorem}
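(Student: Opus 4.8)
The plan is to adapt the discussion that precedes Theorem \ref{t-10}, interchanging the roles of maximum and minimum to reflect that membership in $\sM_\kappa$ forces $0<a<1$. First I would translate the data into the parameter $a$. Since the c-entropy $\calS$ fixes $\kappa=e^{-\calS}$ through \eqref{e-70-entropy}, and since $V_{\Theta_{\mu,h}}(z)\in\sM_\kappa$ gives $V_{\Theta_{\mu,h}}(i)=ai$ with $0<a<1$, relation \eqref{e-57-k} yields
$$
a=\frac{1-\kappa}{1+\kappa}=\frac{1-e^{-\calS}}{1+e^{-\calS}}=\tanh\left(\frac{\calS}{2}\right)\in(0,1).
$$

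Next I would minimize the dissipation coefficient over the admissible parameter $\mu\in\dR\cup\{\infty\}$ by means of the explicit formula \eqref{e-58-dis}. The discussion before Theorem \ref{t-10} already records that $\mu=-c=-\RE m_\infty(i)$ is the unique finite critical point of $\IM h(\mu)$ and that, precisely because $0<a<1$, this critical point is a \emph{maximum} rather than a minimum. Hence the minimum cannot occur at an interior point and must be located in the limit $|\mu|\to\infty$. Writing $t=c+\mu$ and factoring \eqref{e-58-dis} as $\IM h=-ad\,\frac{d^2+t^2}{a^2d^2+t^2}$, where $d=\IM m_\infty(i)<0$ (so $-ad>0$), the fraction is strictly greater than $1$ for every finite $t$ and decreases to $1$ as $|t|\to\infty$; therefore the infimum $-ad$ is attained only at the single point $\mu=\infty$ of $\dR\cup\{\infty\}$, which also gives uniqueness of the minimizer. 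Consequently
$$
\calD_{min}=-ad=-a\,\IM m_\infty(i)=-\tanh\left(\frac{\calS}{2}\right)\IM m_\infty(i).
$$

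Finally I would identify the minimizing L-system explicitly by passing to the limit $\mu\to\infty$ in the boundary-value formula \eqref{e-37-h}: its real part tends to $-\RE m_\infty(i)$ while, as just computed, its imaginary part tends to $-a\,\IM m_\infty(i)$, yielding the claimed $h=-\RE m_\infty(i)-i\tanh\left(\frac{\calS}{2}\right)\IM m_\infty(i)$ at $\mu=\infty$. To confirm that this limiting L-system still meets the constraint, I would verify $V_{\Theta_{\infty,h}}(i)=ai$ directly: letting $\mu\to\infty$ in \eqref{1501} collapses the impedance function to $V_{\Theta_{\infty,h}}(z)=\frac{\IM h}{m_\infty(z)+\RE h}$, and substituting $\RE h=-\RE m_\infty(i)$, $\IM h=-a\,\IM m_\infty(i)$, together with $m_\infty(i)=\RE m_\infty(i)+i\,\IM m_\infty(i)$, gives exactly $ai$, so that $V_{\Theta_{\infty,h}}(z)\in\sM_\kappa$ as required.

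The main obstacle I anticipate is the single structural difference from Theorem \ref{t-10}: there the minimizer was a genuine interior critical point, whereas here it sits at the boundary value $\mu=\infty$, so the minimum must be justified by the monotone limiting behavior of \eqref{e-58-dis} rather than by a vanishing-derivative computation, and one must separately check that the $\mu=\infty$ L-system is legitimate and retains its impedance function in $\sM_\kappa$.
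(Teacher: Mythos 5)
Your proposal is correct, but it takes a genuinely different route from the paper's own proof. You minimize the explicit dissipation formula \eqref{e-58-dis} directly: writing $t=c+\mu$ and factoring $\IM h=-ad\,\tfrac{d^2+t^2}{a^2d^2+t^2}$, you observe that for $0<a<1$ this quantity exceeds $-ad$ at every finite $\mu$ and decreases monotonically to $-ad$ as $|\mu|\to\infty$, and you then verify by hand (via the $\mu\to\infty$ collapse of \eqref{1501}) that the boundary system $\Theta_{\infty,h}$ with $h=-c-iad$ still satisfies $V_{\Theta_{\infty,h}}(i)=ai$, hence remains in $\sM_\kappa$. The paper argues instead by duality: it passes to the L-system $\Theta_{\ti\mu,h}$ with the \emph{same} main operator $T_h$ whose impedance is $-1/V_{\Theta_{\mu,h}}(z)\in\sM_\kappa^{-1}$ (so $\ti a=1/a>1$), notes that the two systems share the same dissipation coefficient and c-entropy, applies the already-proved Theorem \ref{t-10} to place the minimizer at $\ti\mu=-\RE m_\infty(i)$, and then recovers $\mu=\infty$ by solving the relation \eqref{e-63-mu}, whose denominator $c+\RE h$ vanishes. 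Your route is more self-contained and makes transparent the one structural novelty of this case, namely that the minimum sits at the boundary point $\mu=\infty$ rather than at an interior critical point, a fact the paper's reduction leaves implicit; the paper's route is shorter, reuses Theorem \ref{t-10}, and exhibits the symmetry between the $\sM_\kappa$ and $\sM_\kappa^{-1}$ problems under the correspondence $V\mapsto -1/V$, which explains why the answers to the two dual theorems are mirror images of each other. Both arguments yield the same value $\calD_{min}=-\tanh\left(\frac{\calS}{2}\right)\IM m_\infty(i)$ and the same unique minimizing parameters \eqref{e-62-minsys}.
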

\begin{proof}
Since $V_{\Theta_{\mu,h}}(z)\in\sM_\kappa$, we have $V_{\Theta_{\mu,h}}(i)=ai$ for some $0<a<1$. Hence by \eqref{e-57-k}, since $0<a<1$, we have   $\kappa=e^{-\calS}=\frac{1-a}{1+a}$. It was shown in \cite{ABT} that there exists a unique Schr\"odinger L-system $\Theta_{\ti\mu,h}$ of the form  \eqref{149} with the same main operator $T_h$ as in $\Theta_{\mu,h}$ such that $V_{\Theta_{\ti\mu,h}}(z)=-1/V_{\Theta_{\mu,h}}(z)$. Indeed, this L-system $\Theta_{\ti\mu,h}$ is determined via \eqref{137} by the parameters $h$ and
\begin{equation}\label{e-63-mu}
\ti\mu=\frac{\mu\,\RE h-|h|^2}{\mu-\RE h}.
\end{equation}
Furthermore, $V_{\Theta_{\ti\mu,h}}(i)=\ti ai=-1/V_{\Theta_{\mu,h}}(i)=i/a$ and hence $\ti a=1/a>1$. Both L-systems $\Theta_{\mu,h}$ and $\Theta_{\ti\mu,h}$ share the same main operator $T_h$ and thus the same c-entropy $\calS$. Repeating the argument in the proof of Theorem \ref{t-10} we obtain
$$
\ti a=\frac{1+\kappa}{1-\kappa}>1\quad \textrm{and }\quad \frac{1}{\ti a}=\frac{1}{1/a}=\frac{e^{\calS}-1}{e^{\calS}+1}=\tanh \left(\frac{\calS}{2}\right).
$$
Applying \eqref{e-59} yields
$$
\IM h=\calD_{min}=-\frac{1}{\ti a}\IM m_\infty(i)=-\left(\frac{e^{\calS}-1}{e^{\calS}+1}\right)\IM m_\infty(i)=-\tanh \left(\frac{\calS}{2}\right)\IM m_\infty(i).
$$
To find $\RE h$ in this case we substitute $\ti \mu=-c=-\RE m_\infty(i)$ in \eqref{e-37-h}. In order to obtain the value of $\mu$ that defines $\Theta_{\mu, h}$ we set $\ti \mu=-c$ in \eqref{e-63-mu} and solve for $\mu$. We have
$$
-c=\frac{\mu\,\RE h-|h|^2}{\mu-\RE h},
$$
yielding
$$
\mu=\frac{c\,\RE h+|h|^2}{c+\RE h}=\infty.
$$
Thus \eqref{e-62-minsys} takes place and the proof is complete.
\end{proof}
We  note that if $a=1$ in the above consideration, then  the c-entropy is infinite, $\calS=+\infty$, and $\kappa=0$ implying
$$\IM h=-\IM m_\infty(i)$$
(see Theorem \ref{t-6}).

As before, the L-system $\Theta_{\mu,h}$ with the given c-entropy and  minimum coefficient of dissipation in the statement of  Theorem  \ref{t-11} can be written explicitly. In particular, its state space operator $\bA$  is described by  formulas \eqref{137} with the values of $h$ given by  \eqref{e-62-minsys}, and $\mu=\infty$ as follows:
\begin{equation}\label{e-64-A-min}
    \begin{split}
&\bA_{\infty,h}\, y=-y^{\prime\prime}+q(x)y-[y^{\prime}(\ell)-h y(\ell)]\, \delta (x-\ell), \\
&\bA^*_{\infty,h}\, y=-y^{\prime\prime}+q(x)y-[y^{\prime}(\ell)-\overline{h}\,y(\ell)]\,\delta(x-\ell),\\
\end{split}
\end{equation}
where $$h=-\RE m_\infty(i)-i\tanh \left(\frac{\calS}{2}\right)\IM m_\infty(i).$$

\section{L-systems with Schr\"odinger's operator and maximal c-entropy}\label{s8}

In this section we set focus on the second part of the dual c-entropy problem that was announced in Introduction, that is:
\begin{itemize}
      \item
\textit{Describe an L-system with Schr\"odinger main operator with a given dissipation coefficient and the \textit{maximal} c-entropy.}
\end{itemize}
Again we are going to  consider  L-systems with Schr\"odinger operator whose impe\-dance functions belong to the generalized Donoghue classes $\sM$, $\sM_\kappa$, and $\sM_\kappa^{-1}$.  Within this class of L-systems we will describe the ones that have a given dissipation coefficient and the \textit{maximal finite} c-entropy.

Let $\Theta_{\mu,h}$ be a Schr\"odinger L-system of the form \eqref{149}. Let also $\kappa$ be the von Neumann parameter of the main operator $T_h$ in $\Theta_{\mu,h}$. Suppose $V_{\Theta_{\mu,h}}(z)$ is the impedance function of \eqref{1501} that belongs to one of the generalized Donoghue classes.  Then, as we have shown in the proof of Theorem \ref{t-7}, the relation $V_{\Theta_{\mu,h}}(i)=ai$ is given by  \eqref{e-43-V} for some $a>0$. Depending on the value of $a$, the parameter $\kappa$ is related to $a$ via \eqref{e-57-k}.

To address the problem of maximizing the c-entropy using a given dissipation coefficient we start at relation \eqref{e-43-V}. Recall the notations we set in \eqref{e-44-set}, that is $c=\RE m_\infty(i)$, $d=\IM m_\infty(i)$, $x=\RE h$, $y=\IM h$.
As we have shown in the proof of Theorem \ref{t-7} relation \eqref{e-43-V} leads to \eqref{e-45} which (after equating real and imaginary parts on both sides of \eqref{e-45}) yields
\begin{equation}\label{e-65-two}
    \begin{aligned}
    (c+\mu)y&=ad(x-\mu),\\
    dy&=ac(\mu-x)+a\mu x-a x^2-a y^2.
    \end{aligned}
\end{equation}
Solving the first equation in \eqref{e-65-two} gives us
\begin{equation}\label{e-66}
    x-\mu=\frac{(c+\mu)y}{ad}\quad\textrm{ and }\quad \mu=\frac{adx-cy}{ad+y}.
\end{equation}
Substituting the expressions from \eqref{e-66} into the second equation \eqref{e-65-two} yields
$$
dy=-ac\cdot\frac{(c+\mu)y}{ad}-ax\cdot\frac{(c+\mu)y}{ad}-ay^2,
$$
or (after cancelation)
$$
d^2=-c(c+\mu)-x(c+\mu)-ady.
$$
Solving  for $a$ results in
$$
a=-\frac{(c+\mu)(c+x)+d^2}{dy}.
$$
Plugging  the expression for $\mu$ from \eqref{e-66} in the above and simplifying leads us to the quadratic equation for $a$
\begin{equation}\label{e-67}
    dy\,a^2+[(c+x)^2+y^2+d^2]\,a+dy=0.
\end{equation}
Notice the discriminant $D$ of the quadratic equation \eqref{e-67} is non-negative and equals zero if only if $c=-x$ and $y=-d$. Indeed,
$$
\begin{aligned}
D&=[(c+x)^2+y^2+d^2]^2-4d^2y^2\\
&=((c+x)^2+y^2+d^2-2dy)((c+x)^2+y^2+d^2+2dy)\\
&=\left((c+x)^2+(y-d)^2\right)\left((c+x)^2+(y+d)^2\right)\ge0.
\end{aligned}
$$
Therefore, if $c\ne -x$ or $y\ne -d$ equation \eqref{e-67} has two distinct real roots
\begin{equation}\label{e-68}
    a_{1,2}=\frac{-(c+x)^2-y^2-d^2\pm\sqrt{[(c+x)^2+y^2+d^2]^2-4d^2y^2}}{2dy}.
\end{equation}
Set
\begin{equation}\label{e-69-nu}
\nu=(c+x)^2+y^2+d^2.
\end{equation}
Our goal is  to show that
$$
a_1=\frac{-\nu+\sqrt{\nu^2-4d^2y^2}}{2dy}<1.
$$
Since $d<0$ and $y>0$, the above inequality is equivalent to
$$
-\nu+\sqrt{\nu^2-4d^2y^2}>2dy\quad\textrm{ or }\quad \nu^2-4d^2y^2>(\nu+2dy)^2,
$$
which transforms to
$$
(\nu+2dy)^2-(\nu+2dy)(\nu-2dy)<0\quad\textrm{ or }\quad 4dy(\nu+2dy)<0.
$$
Taking into account that $4dy<0$, we get
$$
\nu+2dy=(c+x)^2+y^2+d^2=(c+x)^2+(y+2)^2>0,
$$
for $c\ne -x$ or $y\ne -d$. Reversing the argument and following the chain of equivalent inequalities backward we obtain
\begin{equation}\label{e-83-a1}
    a_{1}=\frac{-(c+x)^2-y^2-d^2+\sqrt{[(c+x)^2+y^2+d^2]^2-4d^2y^2}}{2dy}<1,
\end{equation}
for $c\ne -x$ or $y\ne -d$.

Our next goal is to show that
$$
a_2=\frac{-\nu-\sqrt{\nu^2-4d^2y^2}}{2dy}>1,
$$
for $c\ne -x$ or $y\ne -d$. The above inequality is equivalent to
$$
-\nu-\sqrt{\nu^2-4d^2y^2}<2dy,
$$
which transforms to
$$
\sqrt{\nu^2-4d^2y^2}>-\nu-2dy=-(c+x)^2-y^2-d^2-2dy=-(c+x)^2-(y+d)^2.
$$
The last inequality is always true for any $c\ne -x$ or $y\ne -d$ (we are comparing a positive number on the left to a negative number on the right). As before, we pull the chain of equivalent inequalities backward to obtain
\begin{equation}\label{e-69}
    a_{2}=\frac{-(c+x)^2-y^2-d^2-\sqrt{[(c+x)^2+y^2+d^2]^2-4d^2y^2}}{2dy}>1,
\end{equation}
for $c\ne -x$ or $y\ne -d$.

Finally we observe that substituting $c=-x$ and $y=-d$ into  \eqref{e-68} results in
\begin{equation}\label{e-70}
   a=a_1= a_{2}=1,\quad c=-x, \quad y=-d.
\end{equation}

Now we are ready to state the following.
\begin{theorem}\label{t-12}
Assume that an  L-system $\Theta_{\mu,h}$ of the form  \eqref{149} with a  main operator $T_h$ and a coefficient of dissipation $\calD=\IM h\ne-\IM m_\infty(i)$. Then, under the constraint that the impedance function $V_{\Theta_{\mu,h}}(z)\in\sM_\kappa$ for some $0<\kappa<1$, the L-system $\Theta_{\mu,h}$ has the maximum finite c-entropy $\calS_{max}$ if and only if it is  determined by parameters
\begin{equation}\label{e-71-maxsys}
    h=-\RE m_\infty(i)+i\calD\quad and \quad \mu=\left\{
                                                   \begin{array}{ll}
                                                     -\RE m_\infty(i), & \hbox{if $\calD>-\IM m_\infty(i)$;} \\
                                                     \infty, & \hbox{if $\calD<-\IM m_\infty(i)$.}
                                                   \end{array}
                                                 \right.
    \end{equation}
In this case, $\calS_{max}$  is determined by the formula
\begin{equation}\label{e-70-maxent}
\calS_{max}=\ln|\calD-\IM m_\infty(i)|-\ln|\calD+\IM m_\infty(i)|.
\end{equation}
\end{theorem}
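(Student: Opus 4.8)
The plan is to treat $x=\RE h$ as the single free parameter of the constrained family (with $\calD=\IM h=y$ held fixed and the membership $V_{\Theta_{\mu,h}}\in\sM_\kappa$ imposed) and to maximize the c-entropy over $x$. First I would record that, by \eqref{e-70-entropy} together with $\kappa=\frac{1-a}{1+a}$ for $V_{\Theta_{\mu,h}}\in\sM_\kappa$ (see \eqref{e-57-k}), the c-entropy equals $\calS=\ln\frac{1+a}{1-a}$, which is strictly increasing in $a\in(0,1)$; hence maximizing $\calS$ is the same as maximizing $a$. Since the normalization $V_{\Theta_{\mu,h}}(i)=ai$ leads, exactly as in the derivation preceding the theorem, to the quadratic \eqref{e-67}, the admissible value for the class $\sM_\kappa$ is the root $a=a_1$ given by \eqref{e-83-a1}, namely $a_1=\frac{-\nu+\sqrt{\nu^2-4d^2y^2}}{2dy}$ with $\nu=(c+x)^2+y^2+d^2$ as in \eqref{e-69-nu}.

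The analytic heart is the monotonicity of $a_1$ in $x$. I would observe that $a_1$ depends on $x$ only through $\nu$, and that the numerator $g(\nu)=-\nu+\sqrt{\nu^2-4d^2y^2}$ is negative with $g'(\nu)=-1+\nu/\sqrt{\nu^2-4d^2y^2}>0$; dividing by the negative constant $2dy$ (recall $d=\IM m_\infty(i)<0$ and $y>0$) shows $a_1$ is a strictly decreasing function of $\nu$. Because $\nu$ is minimized exactly when $(c+x)^2=0$, i.e. $x=-c=-\RE m_\infty(i)$, the c-entropy is maximal precisely at $\RE h=-\RE m_\infty(i)$, which fixes $h=-\RE m_\infty(i)+i\calD$. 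The hypothesis $\calD\ne-\IM m_\infty(i)$ guarantees $y\ne-d$ there, so the discriminant stays positive and the maximal c-entropy is finite; strict monotonicity also makes this maximizer unique, which gives the forward implication of the ``if and only if''.

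Next I would evaluate everything at $x=-c$. There $\nu^2-4d^2y^2=(y^2-d^2)^2$, so $a_1=\frac{-(y^2+d^2)+|y^2-d^2|}{2dy}$, which equals $-d/y$ when $y^2>d^2$ (that is, $\calD>-\IM m_\infty(i)$) and $-y/d$ when $y^2<d^2$ (that is, $\calD<-\IM m_\infty(i)$). Feeding these into $\calS=\ln\frac{1+a_1}{1-a_1}$ and simplifying produces in both cases the single formula $\calS_{max}=\ln|\calD-\IM m_\infty(i)|-\ln|\calD+\IM m_\infty(i)|$, which is \eqref{e-70-maxent}. To pin down $\mu$, I would use $\mu=\frac{adx-cy}{ad+y}$ from \eqref{e-66}: when $\calD>-\IM m_\infty(i)$ one has $a_1d+y=(y^2-d^2)/y\ne0$ and the formula collapses to $\mu=-c=-\RE m_\infty(i)$.

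The step I expect to be the main obstacle is the case $\calD<-\IM m_\infty(i)$, where $a_1=-y/d$ makes the denominator $a_1d+y$ vanish and \eqref{e-66} degenerates to the indeterminate $0/0$, signalling $\mu=\infty$. Rather than fight the limit, I would verify $\mu=\infty$ directly: letting $\mu\to\infty$ in \eqref{1501} gives $V_{\Theta_{\infty,h}}(z)=\IM h/(m_\infty(z)+\RE h)$, and at $z=i$, $x=-c$ this is $y/(id)=-\frac{y}{d}\,i=a_1i$, confirming that $\mu=\infty$ realizes the maximizing root. The converse (sufficiency) direction is then immediate: substituting the stated $h$ and $\mu$ into \eqref{1501} reproduces $V_{\Theta_{\mu,h}}(i)=a_1i$ with the computed $a_1$, hence the claimed $\calS_{max}$, while the monotonicity argument shows no other admissible choice yields a larger finite c-entropy.
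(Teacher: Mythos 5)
Your proposal is correct and follows essentially the same route as the paper's proof: identify the admissible root $a_1$ of the quadratic \eqref{e-67} for the class $\sM_\kappa$, show it is maximized exactly at $\RE h=-c=-\RE m_\infty(i)$, and then read off $\mu$ from \eqref{e-66} and $\calS_{max}$ from the value $a_1(-c)$. Your two departures are only presentational refinements: monotonicity of $a_1$ in $\nu$ replaces the paper's explicit differentiation of $a(x)$, and your direct verification that $\mu=\infty$ realizes $a_1=-y/d$ when $\calD<-\IM m_\infty(i)$ makes explicit the degenerate ($0/0$) case of \eqref{e-66} that the paper passes over with "considering both cases."
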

\begin{proof}
Since $V_{\Theta_{\mu,h}}(z)\in\sM_\kappa$, then it satisfies a normalization condition
$$V_{\Theta_{\mu,h}}(i)=ai\quad\textrm{ for }\quad a=\frac{1-\kappa}{1+\kappa}.$$ Moreover, since $0<a<1$ in the above, then formula \eqref{e-83-a1} takes place and hence
\begin{equation}\label{e-74}
    a(x)=\frac{-(c+x)^2-\calD^2-d^2+\sqrt{[(c+x)^2+\calD^2+d^2]^2-4d^2\calD^2}}{2d\calD}.
\end{equation}
Taking the derivative of $a(x)$ in \eqref{e-74} and setting it equal to zero yields
$$
\begin{aligned}
a'(x)&=\frac{c+x}{d\calD}\left(\frac{(c+x)^2+\calD^2+d^2}{\sqrt{[(c+x)^2+\calD^2+d^2]^2-4d^2\calD^2}}-1 \right)=\frac{c+x}{d\calD}\cdot\frac{\nu-\sqrt{\nu^2-4d^2\calD^2}}{\sqrt{\nu^2-4d^2\calD^2}}\\
&=(c+x)\cdot\frac{\nu-\sqrt{\nu^2-4d^2\calD^2}}{d\calD\sqrt{\nu^2-4d^2\calD^2}}=0,
\end{aligned}
$$
where $\nu$ is defined in \eqref{e-69-nu}. By inspection,  $$\frac{\nu-\sqrt{\nu^2-4d^2\calD^2}}{d\calD\sqrt{\nu^2-4d^2\calD^2}}<0$$ and that makes the only critical number $x=-c$ to be a point of maximum. Moreover, this maximum value of $a$ is
\begin{equation}\label{e-76-max-a}
    a(-c)=\left\{
                                                   \begin{array}{ll}
                                                     -d/\calD, & \hbox{if $\calD>-d$;} \\
                                                     -\calD/d, & \hbox{if $\calD<-d$.}
                                                   \end{array}
                                                 \right.
\end{equation}
This follows from the direct substitution of $x=-c$ into \eqref{e-74} giving
$$
\begin{aligned}
a(-c)&=\frac{-\calD^2-d^2+\sqrt{[\calD^2+d^2]^2-4d^2\calD^2}}{2d\calD}=\frac{-\calD^2-d^2+\sqrt{[\calD^2-d^2]^2}}{2d\calD}\\
&=\frac{-\calD^2-d^2+|\calD^2-d^2|}{2d\calD}.
\end{aligned}
$$

Suppose that our L-system $\Theta_{\mu,h}$ with a given coefficient of dissipation $\calD=\IM h\ne\IM m_\infty(i)$ has the maximum finite  c-entropy $\calS_{max}$. It follows from the definition of c-entropy and formula \eqref{e-70-entropy} that the maximum finite c-entropy  is achieved when the modulus of von Neumann parameter $\kappa$ is at its  minimum.
Since $0<a<1$, the corresponding $\kappa$ is related to $a$ via \eqref{e-45-kappa-1}. Consider $\kappa$ as a function of a single variable $x$. Then
$$
\kappa(x)=\frac{1-a(x)}{1+a(x)}
$$
is a decreasing function of $a$ and hence $\kappa(x)$ attains its minimum whenever $a(x)$ assumes its maximum. But as we have just shown  above $a(x)$ has its absolute maximum at $x=-c$. Therefore, the L-system $\Theta_{\mu,h}$ has  the maximum c-entropy $\calS_{max}$ when $x=-c=-\RE m_\infty(i)$. The corresponding values of system parameters $h$ and $\mu$ are found once we recall that $x=\RE h$ and $\calD=\IM h$ and then substitute $x=-c=-\RE m_\infty(i)$ in \eqref{e-66}  considering both cases for $a$ in \eqref{e-76-max-a}. This proves the necessity part of the statement.

 Conversely, let the L-system $\Theta_{\mu,h}$ has parameters $h$ and $\mu$ defined by \eqref{e-71-maxsys}. Then as it follows from the first part of \eqref{e-71-maxsys} we have $x=-c=-\RE m_\infty(i)$. As we have shown in the first part of the proof, this value of $x=-c$ maximizes $a(x)$, minimizes $\kappa(x)$, and hence maximizes the c-entropy of our L-system. Thus, $\Theta_{\mu,h}$ has the maximum finite c-entropy $\calS_{max}$ if \eqref{e-71-maxsys} holds.

 In order to prove \eqref{e-70-maxent} we observe that
  \begin{equation}\label{e-77-min-k}
    \kappa(-c)=\left\{
                                                   \begin{array}{ll}
                                                     \frac{\calD+d}{\calD-d}, & \hbox{if $\calD>-d$;} \\
                                                     \frac{\calD+d}{d-\calD}, & \hbox{if $\calD<-d$,}
                                                   \end{array}
                                                 \right.
\end{equation}
is the absolute minimum of $\kappa$ for all real $x$. It follows from the definition of c-entropy and formula \eqref{e-70-entropy} that the minimum of $\kappa$ corresponds to the maximum of c-entropy and we have
$$
\calS(-c)=-\ln|\kappa(-c)|=-\ln\left|\frac{\calD+d}{\calD-d}\right|=\ln|\calD-d|-\ln|\calD+d|,
$$
confirming \eqref{e-70-maxent}.
\end{proof}

A similar result takes place when $a>1$.

\begin{theorem}\label{t-13}
Assume that an L-system $\Theta_{\mu,h}$ of the form  \eqref{149} with a  main operator $T_h$ and a  coefficient of dissipation $\calD=\IM h\ne-\IM m_\infty(i)$. Then, under constraint that the impedance function $V_{\Theta_{\mu,h}}(z)\in\sM_\kappa^{-1}$ for some $0<\kappa<1$,  the L-system $\Theta_{\mu,h}$ has the maximum finite  c-entropy $\calS_{max}$ if and only if it is  determined by parameters
\begin{equation}\label{e-75-maxsys}
    h=-\RE m_\infty(i)+i\calD\quad and \quad \mu=\left\{
                                                   \begin{array}{ll}
                                                   \infty, & \hbox{if $\calD>-\IM m_\infty(i)$;} \\
                                                       -\RE m_\infty(i), & \hbox{if $\calD<-\IM m_\infty(i)$.}
                                                   \end{array}
                                                 \right.
    \end{equation}

In this case, $\calS_{max}$  is determined by \eqref{e-70-maxent}.
\end{theorem}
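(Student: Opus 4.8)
The plan is to mirror the proof of Theorem \ref{t-12}, exploiting the fact that the two roots $a_1,a_2$ of the quadratic \eqref{e-67} are reciprocals of one another. Since $V_{\Theta_{\mu,h}}(z)\in\sM_\kappa^{-1}$, the normalization $V_{\Theta_{\mu,h}}(i)=ai$ now forces $a>1$, so the relevant branch is the larger root $a_2(x)$ from \eqref{e-69}, viewed as a function of $x=\RE h$ with the dissipation coefficient $\calD=\IM h$ held fixed. By \eqref{e-45-kappa-2} we have $\kappa=\frac{a-1}{a+1}$, which is an \emph{increasing} function of $a$; hence, in contrast with the $\sM_\kappa$ case, maximizing the c-entropy $\calS=-\ln\kappa$ (see \eqref{e-70-entropy}) is equivalent to \emph{minimizing} $a_2(x)$.

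First I would record the key algebraic fact that the product of the roots of \eqref{e-67} equals $dy/dy=1$, so that $a_1a_2=1$ and $a_2=1/a_1$. Because $a_1>0$ and $t\mapsto 1/t$ is strictly decreasing on $(0,\infty)$, minimizing $a_2$ is the same as maximizing $a_1$; and the latter was already shown in the proof of Theorem \ref{t-12} to occur at the unique critical point $x=-c=-\RE m_\infty(i)$. This simultaneously locates the optimizer and confirms that $x=-c$ is a point of minimum for $a_2$, with no new differentiation required (though one could, alternatively, differentiate $a_2(x)$ directly and recover the same critical point).

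Next I would evaluate the optimal data. Taking reciprocals in \eqref{e-76-max-a} gives $a_2(-c)=-\calD/d$ when $\calD>-d$ and $a_2(-c)=-d/\calD$ when $\calD<-d$ (recall $d=\IM m_\infty(i)<0$). To recover $\mu$ I substitute $x=-c$ into the second relation of \eqref{e-66}, obtaining
\[
\mu=\frac{a_2 d(-c)-c\calD}{a_2 d+\calD}=\frac{-c\,(a_2 d+\calD)}{a_2 d+\calD}.
\]
The denominator $a_2 d+\calD$ is the discriminating quantity: a direct check shows it vanishes precisely when $\calD>-d$ (yielding $\mu=\infty$) and is nonzero when $\calD<-d$ (yielding $\mu=-c$). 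This is exactly the interchange of the two cases relative to Theorem \ref{t-12}, confirming \eqref{e-75-maxsys}; the boundary value $h=-\RE m_\infty(i)+i\calD$ then follows at once from $x=-c$ together with $\calD=\IM h$.

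Finally, for the entropy formula I would note that $\kappa(-c)=\frac{a_2-1}{a_2+1}=\frac{1-a_1}{1+a_1}$ coincides with the minimal value of $\kappa$ computed in the $\sM_\kappa$ case of Theorem \ref{t-12}; hence $\calS_{max}=-\ln|\kappa(-c)|$ is again given by \eqref{e-70-maxent}. The converse direction is handled as in Theorem \ref{t-12}: if the parameters are as in \eqref{e-75-maxsys}, then $x=-c$, which minimizes $a_2$ and therefore $\kappa$, so the c-entropy is maximal. The only step requiring genuine care is the sign bookkeeping for $a_2 d+\calD$ that produces the $\mu=\infty$ versus $\mu=-c$ dichotomy; everything else reduces to the reciprocal relation $a_2=1/a_1$ and the analysis already carried out for Theorem \ref{t-12}.
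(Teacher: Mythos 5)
Your proposal is correct and reaches the paper's conclusions, but it executes the central analytic step differently. The paper treats the $\sM_\kappa^{-1}$ case from scratch: it writes the larger root $a(x)$ of \eqref{e-67} explicitly as in \eqref{e-79}, differentiates it, shows $x=-c$ is the unique critical point and is a minimum, evaluates $a(-c)$, and only then recovers $h$, $\mu$, and $\calS_{max}$. You instead observe that \eqref{e-67} has leading coefficient and constant term both equal to $dy$, so by Vieta's formulas $a_1(x)\,a_2(x)=1$ for every $x$; since $t\mapsto 1/t$ is strictly decreasing on $(0,\infty)$, minimizing $a_2$ is the same as maximizing $a_1$, which was already carried out in the proof of Theorem \ref{t-12}. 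The same reciprocal relation gives $\kappa(-c)=\frac{a_2-1}{a_2+1}=\frac{1-a_1}{1+a_1}$, so the coincidence of $\calS_{max}$ with formula \eqref{e-70-maxent} of the $\sM_\kappa$ case becomes structural rather than the outcome of a separate substitution. Your route buys brevity and explains conceptually why Theorems \ref{t-12} and \ref{t-13} have identical entropy formulas and mirrored $\mu$-dichotomies; the paper's direct differentiation buys a self-contained argument that does not lean on the earlier proof. One small caution: when you infer $\mu=\infty$ from the vanishing of $a_2d+\calD$ in \eqref{e-66}, the numerator $-c\,(a_2d+\calD)$ vanishes simultaneously, so the first equation of \eqref{e-65-two} becomes degenerate and by itself determines nothing; strictly one should check that the $\mu=\infty$ system realizes this root, which is immediate since \eqref{1501} with $\mu=\infty$ gives $V_{\Theta_{\infty,h}}(i)=\IM h/\left(m_\infty(i)+\RE h\right)$, and this equals $ai$ precisely when $\RE h=-c$ and $a=-\calD/d$. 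This is, however, the same level of detail at which the paper itself operates, so it is a refinement rather than a gap.
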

\begin{proof}
Since $V_{\Theta_{\mu,h}}(z)\in\sM_\kappa^{-1}$,  it satisfies a normalization condition
$$V_{\Theta_{\mu,h}}(i)=ai\quad\textrm{ for }\quad a=\frac{1+\kappa}{1-\kappa}.$$
 Also, since $a>1$ in the above, then formula \eqref{e-69} takes place and hence
\begin{equation}\label{e-79}
    a(x)=\frac{-(c+x)^2-\calD^2-d^2-\sqrt{[(c+x)^2+\calD^2+d^2]^2-4d^2\calD^2}}{2d\calD}.
\end{equation}
Taking the derivative of $a(x)$ in \eqref{e-79} and setting it equal to zero yields
$$
\begin{aligned}
a'(x)&=-\frac{c+x}{d\calD}\left(\frac{(c+x)^2+\calD^2+d^2}{\sqrt{[(c+x)^2+\calD^2+d^2]^2-4d^2\calD^2}}+1 \right)\\
&=-\frac{c+x}{d\calD}\cdot\frac{\nu+\sqrt{\nu^2-4d^2\calD^2}}{\sqrt{\nu^2-4d^2\calD^2}}=-(c+x)\cdot\frac{\nu+\sqrt{\nu^2-4d^2\calD^2}}{d\calD\sqrt{\nu^2-4d^2\calD^2}}=0,
\end{aligned}
$$
where $\nu$ is defined in \eqref{e-69-nu}. Clearly, we have that $$\frac{\nu-\sqrt{\nu^2-4d^2\calD^2}}{d\calD\sqrt{\nu^2-4d^2\calD^2}}>0$$ and $a(x)$ has the only critical number $x=-c$ a point of minimum. Moreover, the minimum value of $a$ is
\begin{equation}\label{e-80-max-a}
    a(-c)=\left\{
                                                   \begin{array}{ll}
                                                   -\calD/d, & \hbox{if $\calD>-d$;} \\
                                                     -d/\calD, & \hbox{if $\calD<-d$.}
                                                   \end{array}
                                                 \right.
\end{equation}
This follows from the direct substitution of $x=-c$ into \eqref{e-74} giving
$$
\begin{aligned}
a(-c)&=\frac{-\calD^2-d^2-\sqrt{[\calD^2+d^2]^2-4d^2\calD^2}}{2d\calD}=\frac{-\calD^2-d^2-\sqrt{[\calD^2-d^2]^2}}{2d\calD}\\
&=\frac{-\calD^2-d^2-|\calD^2-d^2|}{2d\calD}.
\end{aligned}
$$

Suppose that the L-system $\Theta_{\mu,h}$ with a given coefficient of dissipation $\calD=\IM h\ne\IM m_\infty(i)$ has the maximum finite c-entropy $\calS_{max}$. As we already mentioned in the proof of Theorem \ref{t-12},  the maximum finite c-entropy  is achieved when the modulus of  the von Neumann parameter $\kappa$ is at its  minimum.
Since $a>1$, the corresponding $\kappa$ is related to $a$ via \eqref{e-45-kappa-2}. Consider $\kappa$ as a function of a single variable $x$. Then
$$
\kappa(x)=\frac{a(x)-1}{1+a(x)}
$$
 is increasing function of $a$ and hence $\kappa(x)$ takes its minimum whenever $a(x)$ assumes its minimum. But as we have just shown  above $a(x)$ has its absolute minimum at $x=-c$. Therefore, the L-system $\Theta_{\mu,h}$ has  the maximum c-entropy $\calS_{max}$ when $x=-c=-\RE m_\infty(i)$. The corresponding values of system parameters $h$ and $\mu$ are found once we recall that $x=\RE h$ and $\calD=\IM h$ and then substitute $x=-c=-\RE m_\infty(i)$ in \eqref{e-66}  considering both possible cases for $a$ in \eqref{e-80-max-a}. This proves necessity.

 Conversely, let the L-system $\Theta_{\mu,h}$ has the parameters $h$ and $\mu$ defined by \eqref{e-75-maxsys}. Then as it follows from the first part of \eqref{e-75-maxsys} we have $\RE h=x=-\RE m_\infty(i)=-c$. As we have shown in the first part of the proof, this value of $x=-c$ minimizes $a(x)$, minimizes $\kappa(x)$, and hence maximizes c-entropy of our L-system. Thus, $\Theta_{\mu,h}$ has the maximum finite c-entropy $\calS_{max}$ if \eqref{e-75-maxsys} holds.

 In order to prove \eqref{e-70-maxent} we observe that \eqref{e-77-min-k} is still true for this case when one substitutes the values of $a(-c)$ from \eqref{e-80-max-a} into \eqref{e-45-kappa-2}. Hence \eqref{e-77-min-k} again determines  the absolute minimum of $\kappa$ for all real $x$. It follows from  \eqref{e-70-entropy} that the minimum of $\kappa$ corresponds to the maximum of c-entropy which
confirms \eqref{e-70-maxent}.
\end{proof}
We point out that both Theorems \ref{t-12} and \ref{t-13} describe uniquely the L-systems with maximum c-entropy for the corresponding classes of impedance functions. Also, an attentive reader notices that in Theorems \ref{t-12} and \ref{t-13} the hypothesis that
$\calD=\IM h\ne-\IM m_\infty(i)$ holds and might wonder what happens if  $ h=- m_\infty(i)$ for a given L-system. The following remark addresses that question.

\begin{remark}\label{t-14}
An L-system $\Theta_{\mu,h}$ of the form  \eqref{149} with a  Schr\"odinger operator $T_h$ and a parameter $\mu\in\Bbb R\cup \{\infty\}$ has  infinite c-entropy,  $\calS=\infty$ if and only if $h=-m_\infty(i)$.
Indeed, one might also notice that if $h=-m_\infty(i)$, then $\calD=\IM h=-\IM m_\infty(i)$ and hence formulas \eqref{e-76-max-a} and \eqref{e-80-max-a}  imply that $a(-c)=1$ and $\kappa(-c)=0$. The assertion then is a corollary of Theorem \ref{t-6}.
\end{remark}

\section{c-Entropy in sectorial cases and the Krein-von Neumann extension}\label{s9}


Throughout this section we deal with L-systems whose main operators are either extremal accretive or $\beta$-sectorial.

\subsection{Dual c-entropy problems in the extremal case}

The following theorem gives the solution to the first dual c-entropy problem posed in Section \ref{s7}.
\begin{theorem}\label{t-20}
Assume that  an L-system  $\Theta_{\mu,h}$ of the form  \eqref{149} with a  main operator $T_h$ has c-entropy $\calS$.  Then, under the constraint  that $T_h$ is extremal, the L-system $\Theta_{\mu,h}$  has the minimum coefficient of dissipation
\begin{equation}\label{e-96-dis}
\calD_{min}=-\IM m_\infty(i)\coth\calS-\sqrt{B^2\,\mathrm{csch^2}\,\calS-(A-m)^2},
\end{equation}
where  $A=\RE m_\infty(i)$, $B=-\IM m_\infty(i)$,  and $m=m_\infty(-0)$.

In this case, the L-system $\Theta_{\mu,h}$ is  determined by the parameters
\begin{equation}\label{e-97-minsys}
    h=-m_\infty(-0)+i\calD_{min}\quad and\ \  arbitrary \quad \mu\in\Bbb R\cup \{\infty\}.
\end{equation}
\end{theorem}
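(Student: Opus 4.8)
The plan is to reduce the statement to the explicit description of extremal operators already obtained in Section~\ref{s5}. First I would use \eqref{e-70-entropy}: prescribing the c-entropy $\calS$ fixes the modulus of the von Neumann parameter of $T_h$ through $\kappa=e^{-\calS}$, so that $\kappa^2=e^{-2\calS}$. Since the theorem constrains $T_h$ to be extremal accretive, part~(5) of Theorem~\ref{t-8} forces $\RE h=-m_\infty(-0)=-m$. This is precisely the hypothesis under which the formula \eqref{e-45-new} was derived, so with $H=\IM h$ and the notation \eqref{e-43-new} we have
\[
\kappa^2=\frac{H^2-2BH+D}{H^2+2BH+D}.
\]

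Next I would solve this relation for $H$. Cross-multiplying and collecting terms reproduces exactly the quadratic \eqref{e-14} of Theorem~\ref{t-9-2}, namely $H^2-2B\xi H+D=0$ with $\xi=\frac{1+\kappa^2}{1-\kappa^2}$, whose roots are $H=B\xi\pm\sqrt{B^2\xi^2-D}$ as in \eqref{e-15}. As was shown there, for $\kappa\in(\kappa_0,1)$ both roots are real and positive---giving the two extremal operators that share the same c-entropy---while $\kappa=\kappa_0$ collapses them into one. The minimal coefficient of dissipation is therefore the smaller root $\calD_{min}=B\xi-\sqrt{B^2\xi^2-D}$.

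Finally I would convert this into the hyperbolic form \eqref{e-96-dis} by substituting $\kappa=e^{-\calS}$. A short computation gives $\xi=\coth\calS$ and $\xi^2-1=\mathrm{csch}^2\,\calS$, whence $B\xi=-\IM m_\infty(i)\coth\calS$ and $B^2\xi^2-D=B^2(\xi^2-1)-(A-m)^2=B^2\,\mathrm{csch}^2\,\calS-(A-m)^2$, which is \eqref{e-96-dis}. The description \eqref{e-97-minsys} of the optimal $h$ then follows from $\RE h=-m$ and $\IM h=\calD_{min}$. To see that $\mu$ may be chosen arbitrarily, I would invoke \eqref{e-37}: the modulus $|\kappa|$ (hence $\calS$) and the dissipation coefficient $\calD=\IM h$ depend only on $h$ and $m_\infty(i)$, not on $\mu$; the parameter $\mu$ only selects which $(*)$-extension $\bA_{\mu,h}$ of \eqref{137} plays the role of the state-space operator, leaving the main operator $T_h$ and all quantities in the statement unchanged.

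The main point requiring care is not any computation but the interpretation of ``minimum'': for a prescribed c-entropy there are generically two admissible extremal operators (the two roots of \eqref{e-14}), and the minimal dissipation is attained at the smaller one. One must also note that such extremal operators exist only when $\calS\le-\ln\kappa_0$, equivalently when the radicand $B^2\,\mathrm{csch}^2\,\calS-(A-m)^2$ in \eqref{e-96-dis} is nonnegative, in agreement with the bound $\kappa_0\le\kappa<1$ of Theorem~\ref{t-9-1}.
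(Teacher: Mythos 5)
Your proposal is correct and follows essentially the same route as the paper's proof: both impose $\RE h=-m_\infty(-0)$ via Theorem~\ref{t-8}, combine $\kappa=e^{-\calS}$ with the relation \eqref{e-45-new} to obtain the quadratic $\calD^2-2B(\coth\calS)\calD+D=0$, select the smaller positive root, and simplify using $\coth^2\calS-1=\mathrm{csch}^2\,\calS$. Your added observations---that $\mu$ is arbitrary because $\kappa$ and $\calD$ depend only on $h$ by \eqref{e-37}, and that solvability requires the radicand in \eqref{e-96-dis} to be nonnegative (i.e., $\kappa_0\le e^{-\calS}<1$)---are accurate refinements of points the paper leaves implicit.
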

\begin{proof}
Suppose that the Schr\"odinger L-system $\Theta_{\mu,h}$ has a given fixed c-entropy $\calS$.
Then, as we have shown in Section \ref{s5}, the modulus of von Neumann parameter $\kappa$ is related to the coefficient of dissipation $\calD=\IM h$ via \eqref{e-45-new}
\begin{equation}\label{e-96-kappa-D}
\kappa^2=\frac{\calD^2-2B \calD+D}{\calD^2+2B \calD+D},
\end{equation}
where $B=-\IM m_\infty(i)$, $D=C+B^2$, and $C=(\RE m_\infty(i)-m_\infty(-0))^2$. By Theorem \ref{t-9-1},  $\kappa_0\le\kappa<1$, where $\kappa_0$ is given by \eqref{e-47}. Since $\kappa=e^{-\calS}$, by \eqref{e-96-kappa-D}, we have
$$
e^{-2\calS}=\frac{\calD^2-2B \calD+D}{\calD^2+2B \calD+D}
$$
and hence
$$
(1-e^{-2\calS})\calD^2-2B(1+e^{-2\calS})\calD+(1-e^{-2\calS})D=0,
$$
or
\begin{equation}\label{e-97-eq}
   \calD^2-2B(\coth\calS)\calD+D=0.
\end{equation}
This equation (see the proof of Theorem \ref{t-9-2})  has either two or one real roots (also see Figure \ref{fig-1} for convenience) given by
\begin{equation}\label{e-98-min}
    \calD=B\coth\calS\pm\sqrt{B^2\coth^2\calS-D},
\end{equation}
where $B$ and $D$ are defined above. The case of a unique solution occurs when $e^{-\calS}=\kappa=\kappa_0$. As we have shown in Section \ref{s5}, in this case $$\calD=\sqrt{D}=\sqrt{(\IM m_\infty(i)-m_\infty(-0))^2+(\IM m_\infty(i))^2}.$$ If $\kappa_0<\kappa<1$, then the minimal coefficient of dissipation is the smallest of two  roots of equation \eqref{e-98-min}. It is easy to see that
$$
\calD_{min}=B\coth\calS-\sqrt{B^2\coth^2\calS-D}
$$
in this case. Substituting the values for $B$ and $D$ into the above formula and taking into account that $\mathrm{csch}^2\,\calS=\coth^2\calS-1$ we obtain \eqref{e-96-dis}.
\end{proof}


Now we address the second dual c-entropy problem.  
\begin{theorem}\label{t-21}
Assume that an  L-system $\Theta_{\mu,h}$ is of the form  \eqref{149} with a  main operator $T_h$. Assume in addition that $\RE m_\infty(i)\ne m_\infty(-0)$.
Then, under the constraint that $T_h$ is extremal, the L-system $\Theta_{\mu,h}$ has the maximum finite c-entropy $\calS_{max}$ if and only if it is determined by the parameters
\begin{equation}\label{e-115-maxent}
    h=-m+i\sqrt{(A-m)^2+B^2}\quad and\ \  (arbitrary) \quad \mu\in\Bbb R\cup \{\infty\},
\end{equation}
where  $A=\RE m_\infty(i)$, $B=-\IM m_\infty(i)$, and $m=m_\infty(-0)$.

In this case,
\begin{equation}\label{e-100-maxent}
\calS_{max}=-\frac{1}{2}\ln\left( \frac{\sqrt{A^2+B^2-2mA+m^2}-B}
{\sqrt{A^2+B^2-2mA+m^2}+B}\right).
\end{equation}
\end{theorem}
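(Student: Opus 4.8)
The plan is to reduce the maximization of c-entropy over the family of extremal Schr\"odinger L-systems to the one-variable minimization of the von Neumann parameter already carried out in Section~\ref{s5}. First I would invoke part~(5) of Theorem~\ref{t-8}: since $T_h$ is extremal accretive, its boundary value satisfies $\RE h=-m_\infty(-0)=-m$. Thus $\RE h$ is pinned down and the only remaining freedom in $T_h$ is the dissipation coefficient $\calD=\IM h>0$. Because the c-entropy $\calS=-\ln\kappa$ depends only on the modulus $\kappa$ of the von Neumann parameter of $T_h$---hence only on $h$ and not on $\mu$---the problem becomes: over $\calD>0$, when is $\kappa$ minimal (equivalently, $\calS$ maximal)?

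Next I would recall formula~\eqref{e-96-kappa-D} from the proof of Theorem~\ref{t-20} (equivalently \eqref{e-45-new}), which expresses $\kappa^2$ as the rational function
\[
\kappa^2=\frac{\calD^2-2B\calD+D}{\calD^2+2B\calD+D},\qquad B=-\IM m_\infty(i),\quad D=(A-m)^2+B^2.
\]
The analysis of $f(H)=\kappa^2(H)$ in Section~\ref{s5} shows that $f'(H)=0$ has the single positive root $H=\sqrt D$, which is the unique minimum of $f$ on $(0,\infty)$, with value $f(\sqrt D)=(\sqrt D-B)/(\sqrt D+B)=\kappa_0^2$. Since maximizing $\calS$ is the same as minimizing $\kappa^2$, the maximal c-entropy is attained precisely when $\calD=\sqrt D=\sqrt{(A-m)^2+B^2}$, giving $h=-m+i\sqrt{(A-m)^2+B^2}$ as in~\eqref{e-115-maxent}, while $\mu$ stays arbitrary in $\bbR\cup\{\infty\}$. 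Uniqueness of the critical point yields the ``only if'' direction, and the ``if'' direction is the direct substitution of this value of $\calD$ back into $f$.

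The value~\eqref{e-100-maxent} then follows by taking logarithms, $\calS_{max}=-\ln\kappa_0=-\tfrac12\ln(\kappa_0^2)=-\tfrac12\ln\big((\sqrt D-B)/(\sqrt D+B)\big)$, and substituting $\sqrt D=\sqrt{A^2+B^2-2mA+m^2}$. The hypothesis $\RE m_\infty(i)\ne m_\infty(-0)$ (that is, $A\ne m$) enters exactly to secure finiteness: it forces $C=(A-m)^2>0$, hence $\sqrt D>B$, hence $\kappa_0>0$ and $\calS_{max}<\infty$; by Lemma~\ref{l-13}, dropping it would send $\kappa_0\to 0$ and the c-entropy to $+\infty$, the degenerate situation already recorded in Remark~\ref{t-14}. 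I expect the only genuine subtlety to be conceptual rather than computational: unlike the Donoghue-class setting of Theorems~\ref{t-12} and~\ref{t-13}, where the class constraint ties $\mu$ to $h$, here the extremality constraint $\RE h=-m$ decouples the optimization entirely from $\mu$, so one must justify that $\mu$ genuinely remains free and that the c-entropy is governed by $\kappa^2(\calD)$ alone.
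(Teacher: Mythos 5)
Your proposal is correct and follows essentially the same route as the paper: both reduce the problem, via the extremality condition $\RE h=-m_\infty(-0)$ from Theorem~\ref{t-8}, to the one-variable minimization of $\kappa^2(\IM h)$ already performed in Section~\ref{s5}, identify the unique minimizer $\IM h=\sqrt{D}$ with minimum value $\kappa_0^2=(\sqrt D-B)/(\sqrt D+B)$, and conclude $\calS_{max}=-\ln\kappa_0$ with $\mu$ left arbitrary. Your treatment is somewhat more explicit than the paper's (spelling out both implications and the precise role of the hypothesis $\RE m_\infty(i)\ne m_\infty(-0)$ in guaranteeing $\kappa_0>0$ and hence finiteness), but the underlying argument is identical.
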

\begin{proof}

It follows from the development in Section \ref{s5}  that there are many  L-system of the form \eqref{149}  with an extremal main operators $T_h$ only different by the coefficient of dissipation (see also Figure \ref{fig-1}).
Naturally, such an L-system has the maximum c-entropy $\calS_{max}$ if the modulus of von Neumann parameter $\kappa$ of $T_h$ is at minimum, i.e., $\kappa=\kappa_0$ (see formula \eqref{e-47}). Note that $\kappa_0\ne0$ or otherwise $A=m$ and our L-system $\Theta_{\mu,h}$ has infinite entropy according to Theorem \ref{t-6}. We know that $\kappa=\kappa_0$   if and only if
\begin{equation}\label{e-90-imh}
\IM h=\sqrt{(\RE m_\infty(i)-m_\infty(-0))^2+(\IM m_\infty(i))^2}=\sqrt{(A-m)^2+B^2}.
\end{equation}
In this case (see \eqref{e-70-entropy}) $\calS_{max}=-\ln(\kappa_0)$ proving \eqref{e-100-maxent}.
\end{proof}
\begin{remark}\label{r-26}
{If  $\RE m_\infty(i)=m_\infty(-0)$, then $h=-m_\infty(i)$ and our L-system $\Theta_{\mu,h}$ has infinite entropy according to Theorem \ref{t-6}.}
\end{remark}


\subsection{Dual c-entropy problems in the sectorial case}\label{ss9-2}
We use similar analysis to treat the sectorial case. We remind the reader that by saying that an accretive operator is $\beta$-sectorial, we mean that $\beta\in(0,\pi/2)$ is its exact angle of sectoriality unless otherwise is specified.

 The following theorem is analogues to Theorem \ref{t-20} for the sectorial case. 

\begin{theorem}\label{t-22}
Assume that an L-system $\Theta_{\mu,h}$ of the form  \eqref{149} with a  main operator $T_h$ has c-entropy $\calS$. Then, under the constraint that $T_h$ is  $\beta$-sectorial, the L-system $\Theta_{\mu,h}$  has the minimum coefficient of dissipation
\begin{equation}\label{e-102-dis}
\calD_{min}=\sin^2\beta\left[B\coth \calS-E\cot\beta-\sqrt{(E\cot\beta-B\coth \calS)^2-D} \right],
\end{equation}
where $A=\RE m_\infty(i)$, $B=-\IM m_\infty(i)$, $D=E^2+B^2$, $E=A-m$, 
and $m=m_\infty(-0)$.

In this case, the L-system $\Theta_{\mu,h}$ is  determined by the parameters
\begin{equation}\label{e-119-minsys}
    h=(\cot\beta)\calD_{min}-m+i\calD_{min}\quad and\ \  (arbitrary) \quad \mu\in\Bbb R\cup \{\infty\}.
\end{equation}
\end{theorem}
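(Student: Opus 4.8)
The plan is to reduce the statement to the quadratic relation between the modulus of the von Neumann parameter $\kappa$ and the coefficient of dissipation $\calD=\IM h$ that was already derived in Section \ref{s6}, and then to rewrite that relation in terms of the c-entropy $\calS$. The bridge between the two parametrizations is the observation that a fixed c-entropy forces, by \eqref{e-70-entropy}, the value $\kappa=e^{-\calS}$, so that the auxiliary quantity $\xi=\frac{1+\kappa^2}{1-\kappa^2}$ used throughout the proof of Theorem \ref{t-15} becomes $\xi=\coth\calS$. This single identity is what converts all the earlier $\kappa$-indexed formulas into the $\calS$-indexed ones claimed here.

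First I would record the $\beta$-sectoriality hypothesis in usable form. By part (4) of Theorem \ref{t-8} we have $\RE h>-m_\infty(-0)$, and by \eqref{e10-45} the boundary value satisfies the linear constraint \eqref{e-48}, namely $\RE h+m_\infty(-0)=(\cot\beta)\IM h$. Substituting this constraint into \eqref{e-44-new} produces exactly the expression \eqref{e-49} for $\kappa^2$ as a function of $H=\IM h=\calD$. Next I would set $\kappa^2=e^{-2\calS}$ and clear denominators; using $\frac{1+e^{-2\calS}}{1-e^{-2\calS}}=\coth\calS$, this collapses to the quadratic equation \eqref{e-77-new} with $\xi=\coth\calS$, that is $(\csc^2\beta)\calD^2+2[E\cot\beta-B\coth\calS]\calD+D=0$, where $E=A-m$.

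The remaining steps are then the same as in the proof of Theorem \ref{t-15} and structurally identical to the extremal Theorem \ref{t-20}. Solving the quadratic by the formula \eqref{e-78-new} with $\xi$ replaced by $\coth\calS$ yields two candidate values of $\calD$; since the dissipation coefficient is to be minimized I would select the smaller root, obtained with the minus sign, which after simplification (using $\csc^2\beta=1+\cot^2\beta$) gives \eqref{e-102-dis}. The boundary value of $h$ is then recovered by back-substitution: $\IM h=\calD_{min}$ and, from \eqref{e-48}, $\RE h=(\cot\beta)\calD_{min}-m$, which is precisely \eqref{e-119-minsys}. Finally I would note that $\mu$ is free: the $\beta$-sectoriality criterion in Theorem \ref{t-8} and the modulus \eqref{e-41} of the von Neumann parameter, hence the c-entropy, depend only on $h$, so every $\mu\in\Bbb R\cup\{\infty\}$ produces an admissible L-system with the same $\calS$ and the same $\calD_{min}$.

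Since Section \ref{s6} already carries out all of the algebra, the only substantive points to verify are that the selected minus-sign root is real and positive. Reality is inherited from the discriminant analysis in the proof of Theorem \ref{t-15}: the discriminant of \eqref{e-77-new}, viewed as a function of $\xi$, vanishes at $\xi_0=\frac{1+\kappa_0^2}{1-\kappa_0^2}$ and is positive for larger $\xi$, and a finite c-entropy corresponds to $\kappa=e^{-\calS}\ge\kappa_0$, i.e. $\xi=\coth\calS\ge\xi_0$; positivity of both roots follows from $B>0$ and $D>0$. The chief source of friction I anticipate is purely notational, namely keeping the present meaning $E=A-m$ distinct from the unrelated use of the symbol $E$ in \eqref{e-75-E}, together with the trigonometric bookkeeping needed to cast \eqref{e-78-new} into the compact closed form \eqref{e-102-dis}.
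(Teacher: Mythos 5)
Your proposal follows the paper's own proof essentially step for step: the paper also starts from the $\kappa$--$\calD$ relation \eqref{e-49} (restated there as \eqref{e-99-kappa-D}), substitutes $\kappa=e^{-\calS}$ so that $\xi=\frac{1+\kappa^2}{1-\kappa^2}=\coth\calS$, reduces to the quadratic \eqref{e-100-eq} (which is \eqref{e-77-new} with $\xi=\coth\calS$), takes the smaller of the two roots, recovers $h$ from \eqref{e-48}, and observes that $\mu\in\Bbb R\cup\{\infty\}$ remains free. Your appeal to Theorem \ref{t-10-1} and to the discriminant analysis of Theorem \ref{t-15} for reality of the roots is exactly the paper's justification as well.

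One concrete caveat: your closing ``simplification'' step is not valid algebra, although it reproduces a slip that the paper itself makes. Solving $(\csc^2\beta)\calD^2+2\left[E\cot\beta-B\coth\calS\right]\calD+D=0$ gives, per \eqref{e-78-new}, the radicand $\left(E\cot\beta-B\coth\calS\right)^2-D\csc^2\beta$, whereas \eqref{e-102-dis} (and the paper's \eqref{e-101-min}, \eqref{e-103-min}) displays $\left(E\cot\beta-B\coth\calS\right)^2-D$; the two differ by $D\cot^2\beta$, and the identity $\csc^2\beta=1+\cot^2\beta$ cannot reconcile them. The $D\csc^2\beta$ version is the correct one: it is consistent with \eqref{e-75-E}, it makes the discriminant vanish precisely at $\xi_0$ (equivalently $\kappa=\kappa_0$), where the double root is $\calD=(\sin\beta)\sqrt D$ as the paper asserts, and it degenerates to Theorem \ref{t-20} at $\beta=\pi/2$. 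So your derivation is sound up to \eqref{e-78-new}; you should state the minimal root with the $D\csc^2\beta$ radicand and note that \eqref{e-102-dis} as printed carries a typo, rather than claim the two expressions coincide.
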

\begin{proof}

Let $\Theta_{\mu,h}$ have a $\beta$-sectorial main operator $T_h$ whose von Neumann's parameter is $\kappa$. Then $\kappa$ is related to the coefficient of dissipation $\calD=\IM h$ via \eqref{e-49}, that is
\begin{equation}\label{e-99-kappa-D}
\kappa^2=\frac{\calD^2+2\sin^2\beta((A-m)\cot\beta-B)\calD+D\sin^2\beta}{\calD^2+2\sin^2\beta((A-m)\cot\beta+B)\calD+D\sin^2\beta}.
\end{equation}
According to Theorem \ref{t-10-1}, the parameter $\kappa$ in this case belongs to the interval $0<\kappa_0\le\kappa<1$, where $\kappa_0$ is given by \eqref{e-50}, \eqref{e-51}. Note that $\kappa_0\ne0$ since otherwise this will contradict Lemma \ref{l-2}.

 In order to find the minimal coefficient of dissipation we recall that $\kappa=e^{-\calS}$, use \eqref{e-99-kappa-D}, and solve
$$
e^{-2\calS}=\frac{\calD^2+2\sin^2\beta((A-m)\cot\beta-B)\calD+D\sin^2\beta}{\calD^2+2\sin^2\beta((A-m)\cot\beta+B)\calD+D\sin^2\beta}
$$
for $\calD$ to pick the minimal root. The above equation simplifies to the quadratic one of the form
$$\begin{aligned}
(1-e^{-2\calS})\calD^2&+2B\sin^2\beta\left[(1-e^{-2\calS})(A-m)-\left(1+e^{-2\calS}\right)B\right]\calD\\
&+(1-e^{-2\calS})D\sin^2\beta=0,
\end{aligned}
$$
or, with $E=A-m$,
\begin{equation}\label{e-100-eq}
    \calD^2+2B\sin^2\beta\left[E\cot\beta-\left(\coth\calS\right)B\right]\calD+D\sin^2\beta=0.
\end{equation}
Equation \eqref{e-100-eq} has either two or one real solutions (see also Figure \ref{fig-2}) given by
\begin{equation}\label{e-101-min}
    \calD=(\sin^2\beta)\left[B\coth \calS-E\cot\beta\pm\sqrt{(E\cot\beta-B\coth \calS)^2-D} \right].
\end{equation}
 The case of a unique solution occurs when $e^{-\calS}=\kappa=\kappa_0$, where $\kappa_0$ is given by \eqref{e-50}, \eqref{e-51}. As we have shown in Section \ref{s6}, in this case
$$\calD=(\sin\beta)\sqrt{D}=(\sin\beta)\sqrt{|m_\infty(i)|^2-2m_\infty(-0)\RE\,m_\infty(i)+m_\infty^2(-0)}.$$
If $\kappa_0<\kappa<1$, then the minimal coefficient of dissipation is the smallest of the two  roots of \eqref{e-101-min}. It is easy to see that in this case
\begin{equation}\label{e-103-min}
\calD_{min}=(\sin\beta)^2\left[B\coth \calS-E\cot\beta-\sqrt{(E\cot\beta-B\coth \calS)^2-D} \right].
\end{equation}
Formula \eqref{e-119-minsys} follows from the relation \eqref{e-48} connecting the real and imaginary part of the parameter $h$ that describes a $\beta$-sectorial operator $T_h$.
\end{proof}

Now we address the second dual problem of maximizing c-entropy for the class of Schr\"odinger L-systems with sectorial main operators.

The following theorem is similar to Theorem \ref{t-21}.
\begin{theorem}\label{t-23}
Assume that an  L-system $\Theta_{\mu,h}$ is of the form  \eqref{149} with a  main operator $T_h$. Then, under the constraint that $T_h$ is $\beta$-sectorial, the L-system $\Theta_{\mu,h}$ has the maximum finite c-entropy $\calS_{max}$ if and only if it is determined by the parameters
\begin{equation}\label{e-126-maxent}
    h=(\cos\beta)\sqrt{D}-m+i(\sin\beta)\sqrt{D}\quad and\ \  (arbitrary) \quad \mu\in\Bbb R\cup \{\infty\},
\end{equation}
where  $A=\RE m_\infty(i)$, $B=-\IM m_\infty(i)$, $D=(A-m)^2+B^2$,  and $m=m_\infty(-0)$.

In this case,
\begin{equation}\label{e-95-Smax}
    \calS_{max}=-\frac{1}{2}\ln\left( \frac{\sqrt D+(A-m)\cos\beta-B\sin\beta}{\sqrt D+(A-m)\cos\beta+B\sin\beta}\right).
\end{equation}
\end{theorem}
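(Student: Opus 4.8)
The plan is to mirror the argument already used for the extremal case in Theorem~\ref{t-21}, exploiting the fact that the c-entropy depends only on the modulus $\kappa$ of the von Neumann parameter of $T_h$ and that the entire dependence of $\kappa$ on the boundary value $h$ was worked out in Section~\ref{s6}. Since $\calS=-\ln\kappa$ by \eqref{e-70-entropy} and $\kappa\in(0,1)$, maximizing the c-entropy is the same as minimizing $\kappa$. The role of $\mu$ is inert: by \eqref{e-37} the quantity $\kappa=\left|\frac{m_\infty(i)+h}{m_\infty(i)+\overline h}\right|$ does not involve $\mu$, so $\mu\in\Bbb R\cup\{\infty\}$ may be chosen freely, which is why the statement lists $\mu$ as arbitrary.

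First I would recall from Section~\ref{s6} that for a $\beta$-sectorial $T_h$ the constraint \eqref{e-48}, namely $\RE h+m_\infty(-0)=(\cot\beta)\IM h$, lets one write $\kappa^2$ as the single-variable function $f(H)=\kappa^2(H)$ of $H=\IM h$ recorded in \eqref{e-49}. The computation carried out there shows that $f'(H)$ vanishes for $H>0$ only at the critical point $H=(\sin\beta)\sqrt D$, that this is a minimum, and that the minimal value equals $\kappa_0^2$ with $\kappa_0$ as in \eqref{e-50}, \eqref{e-51}. Thus over all $\beta$-sectorial operators $T_h$ one has $\kappa\ge\kappa_0$ (this is Theorem~\ref{t-10-1}), with equality precisely when $\IM h=(\sin\beta)\sqrt D$.

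The key steps are then: (i) identify the maximizer. Since $H=(\sin\beta)\sqrt D$ is the unique minimizer of $f$ on $(0,\infty)$, the c-entropy is maximal if and only if $\IM h=(\sin\beta)\sqrt D$; feeding this into \eqref{e-48} gives $\RE h=-m+(\cot\beta)(\sin\beta)\sqrt D=-m+(\cos\beta)\sqrt D$, which assembles into \eqref{e-126-maxent}. (ii) Evaluate the maximum: at $\kappa=\kappa_0$ formula \eqref{e-70-entropy} gives $\calS_{max}=-\ln\kappa_0=-\tfrac12\ln\kappa_0^2$, and substituting \eqref{e-50} for $\kappa_0^2$ yields \eqref{e-95-Smax}. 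The converse (sufficiency) is immediate: if $h$ is as in \eqref{e-126-maxent} then $\IM h=(\sin\beta)\sqrt D$, so $\kappa$ attains its minimum $\kappa_0$ and $\calS$ its maximum; existence of the corresponding operator is guaranteed by Theorem~\ref{t-15} applied with $\kappa=\kappa_0$.

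I expect no serious obstacle, since the analytic heavy lifting (differentiating $f$, locating the minimum, simplifying $\kappa_0$) was completed in Section~\ref{s6}. The one point worth verifying is that $\calS_{max}$ is genuinely finite, i.e. that $\kappa_0>0$, so that the maximum is attained rather than being an unbounded supremum. Unlike the extremal case of Theorem~\ref{t-21}, here no extra hypothesis on $m_\infty$ is needed: for $\beta\in(0,\pi/2)$ one has $\cos\beta>0$ and $\sin\beta<1$, and since $\sqrt D\ge B$ together with $A\ge m$ (Lemma~\ref{l-16}) the numerator $\sqrt D+(A-m)\cos\beta-B\sin\beta$ of $\kappa_0^2$ is strictly positive; equivalently, $\kappa_0>0$ also follows from Lemma~\ref{l-2}, which forbids a $\beta$-sectorial operator from having vanishing von Neumann parameter. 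This guarantees $\calS_{max}<\infty$ and closes the argument.
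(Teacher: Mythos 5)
Your proposal is correct and follows essentially the same route as the paper's proof: both reduce maximizing c-entropy to minimizing $\kappa$ via \eqref{e-70-entropy}, invoke the Section~\ref{s6} analysis of $\kappa^2$ as a function of $\IM h$ (minimum at $\IM h=(\sin\beta)\sqrt D$ with value $\kappa_0^2$ from \eqref{e-50}--\eqref{e-51}), recover $\RE h$ from \eqref{e-48}, and use Lemma~\ref{l-2} to ensure $\kappa_0>0$ so that $\calS_{max}=-\ln\kappa_0$ is finite and equals \eqref{e-95-Smax}. Your added touches (explicit appeal to Theorem~\ref{t-15} for existence in the converse, and the direct positivity estimate for the numerator of $\kappa_0^2$) are harmless refinements of the same argument.
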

\begin{proof}

It follows from the development in  Section \ref{s6} and formulas \eqref{e-50} and \eqref{e-51}  that  an L-system of the form \eqref{149} with a $\beta$-sectorial main operator $T_h$ has the maximum c-entropy $\calS_{max}$ if the modulus of von Neumann parameter $\kappa$ of $T_h$ is at minimum (see also Figure \ref{fig-2}). That happens if and only if $\kappa=\kappa_0$ (see  \eqref{e-50} and  \eqref{e-51}) and hence
\begin{equation}\label{e-102-D}
\IM h=\sin\beta\cdot\sqrt{D}=\sin\beta\cdot\sqrt{(\RE m_\infty(i)-m_\infty(-0))^2+(\IM m_\infty(i))^2}.
\end{equation}
Note that $\kappa_0\ne0$ or otherwise this will violate Lemma \ref{l-2}.
Moreover, in this case (see \eqref{e-70-entropy}) $\calS_{max}=-\ln(\kappa_0)$ proving \eqref{e-95-Smax}. The expression in the right hand side of \eqref{e-95-Smax} is well defined as the fraction inside the logarithm represents $\kappa_0^2$, the positive quantity.
\end{proof}

\subsection{Accretive $T_h$ case}
In this subsection we are going to look at the situation when the main operator $T_h$ of the Schr\"odinger L-system under consideration is just accretive. Recall  (see Section \ref{s2} for the definition) that the operator $T_h$ is accretive if $\RE(T_h y,y)\ge 0$ for all $y\in \dom(T_h)$. Clearly, the set of all such accretive operators consists of the  class of $\beta$-sectorial (for some $\beta\in(0,\pi/2))$ operators plus the class of extremal operators. 

The first question we address is  which  L-systems $\Theta_{\mu,h}$ with accretive main operators $T_h$ have the maximal finite c-entropy.
\begin{theorem}\label{t-24}
Assume that an  L-system $\Theta_{\mu,h}$ is of the form  \eqref{149} with a  main operator $T_h$. Then, under the constraint that $T_h$ is accretive,   the L-system $\Theta_{\mu,h}$ with extremal accretive main operator  achieves the maximum finite c-entropy $\calS_{max}$.

 In this case, $\calS_{max}$ is given by \eqref{e-100-maxent} and $\Theta_{\mu,h}$ is determined by the parameters
\begin{equation}\label{e-109-h}
h=-m+i\sqrt{(A-m)^2+B^2}\quad and \quad (arbitrary)\; \mu\in\Bbb R\cup \{\infty\},
\end{equation}
where $A=\RE m_\infty(i)$, $B=-\IM m_\infty(i)$,  and $m=m_\infty(-0)$.
\end{theorem}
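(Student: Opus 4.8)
The plan is to recast the statement as an optimization. By \eqref{e-70-entropy} the c-entropy of $\Theta_{\mu,h}$ equals $\calS=-\ln\kappa$, so maximizing $\calS$ over accretive main operators is the same as minimizing the modulus $\kappa$ of the von Neumann parameter over all admissible boundary data $h$. By item (3) of Theorem \ref{t-8}, $T_h$ is accretive precisely when $\RE h\ge -m$ (with $m=m_\infty(-0)$), and the accretive class is the union of the $\beta$-sectorial operators ($\RE h>-m$) and the extremal accretive ones ($\RE h=-m$). Throughout I keep the notation \eqref{e-43-new}: $A=\RE m_\infty(i)$, $B=-\IM m_\infty(i)>0$, $C=(A-m)^2$, $D=C+B^2$. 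Since the $\mu$-dependent factor in \eqref{e-37} has modulus one, $\kappa$ depends only on $h$, which is why $\mu$ will remain free.

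First I would invoke the general relation \eqref{e-44-new}, valid for every $T_h$, and rewrite it as
\begin{equation*}
\kappa^2=1-\frac{4B\,\IM h}{(A+\RE h)^2+(\IM h+B)^2}.
\end{equation*}
Minimizing $\kappa^2$ is thus equivalent to maximizing the positive quantity $4B\,\IM h/[(A+\RE h)^2+(\IM h+B)^2]$. The crucial step is that, for a fixed $\IM h>0$, this expression is strictly decreasing in $(A+\RE h)^2$, hence maximized by making $(A+\RE h)^2$ as small as the accretivity constraint permits. Because $\RE h\ge -m$ we have $A+\RE h\ge A-m$, and here Lemma \ref{l-16} is indispensable: it guarantees $A\ge m$, so $A+\RE h\ge A-m\ge 0$ and $(A+\RE h)^2$ attains its minimum over the accretive range exactly at the boundary $\RE h=-m$, i.e. when $T_h$ is extremal. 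This singles out extremality as optimal and reduces everything to the problem already solved in Section \ref{s5}.

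With $\RE h=-m$ fixed we have $(A+\RE h)^2=C$, and $\kappa^2$ collapses to the one-variable function \eqref{e-45-new} of $H=\IM h$, namely $f(H)=(H^2-2BH+D)/(H^2+2BH+D)$; the analysis following \eqref{e-45-new} shows $f$ is minimized at $H=\sqrt D$ with value $(\sqrt D-B)/(\sqrt D+B)$. Hence the global minimum of $\kappa$ over the accretive class is $\kappa_0$ of \eqref{e-47}, attained by the extremal operator with $\IM h=\sqrt D=\sqrt{(A-m)^2+B^2}$. Reading off $\RE h=-m$ and this value of $\IM h$ gives the datum \eqref{e-109-h}, while $\calS_{max}=-\ln\kappa_0=-\tfrac12\ln\frac{\sqrt D-B}{\sqrt D+B}$ coincides with \eqref{e-100-maxent} once one notes $D=A^2+B^2-2mA+m^2$ (this is exactly the content of Theorem \ref{t-21}, whose finiteness hypothesis $A>m$ we inherit; when $A=m$ one has $h=-m_\infty(i)$ and the entropy is infinite by Theorem \ref{t-6}). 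Since $\calS$ depends only on $h$ and not on the state-space parameter, any $\mu\in\Bbb R\cup\{\infty\}$ yields the same maximal value.

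I expect the monotonicity argument of the second paragraph to be the main obstacle — not computationally, but because it rests entirely on the sign information $A\ge m$ furnished by Lemma \ref{l-16}. If $A<m$ were possible, then $A+\RE h\ge A-m$ would admit negative values and $(A+\RE h)^2$ could vanish at an interior, strictly sectorial, choice of $\RE h$; some $\beta$-sectorial operator would then produce a smaller $\kappa$ than any extremal one, and the theorem would fail. As an independent check one may compare with Theorem \ref{t-23}: cross-multiplying the desired inequality $\calS_{max}\ge\calS_{max}^{(\beta)}$ between \eqref{e-100-maxent} and \eqref{e-95-Smax} reduces it, after cancellation, to the manifestly nonnegative expression $2B\big[\sqrt D\,(1-\sin\beta)+(A-m)\cos\beta\big]$, strictly positive for every $\beta\in(0,\pi/2)$ and again relying on $A\ge m$; thus the extremal value strictly dominates the sectorial ones and is approached by them only in the limit $\beta\to\pi/2$.
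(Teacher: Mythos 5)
Your proof is correct, but it takes a genuinely different route from the paper's. The paper does not optimize over the accretive half-plane directly: it first establishes the two constrained maxima separately (Theorem \ref{t-21} for the extremal case, Theorem \ref{t-23} for the $\beta$-sectorial case, each via the $\kappa_0$-analysis of Sections \ref{s5} and \ref{s6}), and then proves the inequality $\calS_{max}^{sec}<\calS_{max}^{ext}$ by studying the function $f(\beta)=\frac{\sqrt D+(A-m)\cos\beta-B\sin\beta}{\sqrt D+(A-m)\cos\beta+B\sin\beta}$ on $(0,\pi/2)$: using Lemma \ref{l-16} to justify $A>m$ (after discarding $A=m$ as the infinite-entropy case), it shows $f'(\beta)<0$, so $f$ decreases to its limit $f(\pi/2-)=\frac{\sqrt D-B}{\sqrt D+B}$, which yields the required inequality \eqref{e-128-in}. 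You instead perform a direct two-variable optimization of $\kappa^2=1-\frac{4B\,\IM h}{(A+\RE h)^2+(\IM h+B)^2}$ over $\{\RE h\ge -m,\ \IM h>0\}$: for fixed $\IM h$ the constrained minimum in $\RE h$ sits at the extremal boundary $\RE h=-m$ (this is exactly where Lemma \ref{l-16} enters, ensuring $A+\RE h\ge A-m\ge 0$), and then the one-variable analysis of \eqref{e-45-new} finishes the job. Your approach is more self-contained — it bypasses the entire sectorial parametrization of Section \ref{s6} and Theorem \ref{t-23}, and it isolates transparently why $A\ge m$ is indispensable; the paper's approach buys economy by reusing results already in place, at the cost of routing the argument through both constrained subproblems. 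Your algebraic cross-check, reducing \eqref{e-128-in} to the positivity of $2B\bigl[\sqrt D\,(1-\sin\beta)+(A-m)\cos\beta\bigr]$, is in fact a cleaner derivation of the paper's key inequality than the derivative computation the paper uses, and it is correct as stated.
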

\begin{proof}
We have already mentioned above that since our main operator $T_h$ is accretive, then it is either $\beta$-sectorial (for some $\beta\in(0,\pi/2))$ or extremal accretive. In the first case the  L-system $\Theta_{\mu,h}$ with $\beta$-sectorial main operator has the maximum entropy $\calS_{max}^{sec}$ given by \eqref{e-95-Smax}. In the second case the  L-system $\Theta_{\mu,h}$ with the extremal main operator has the maximum entropy $\calS_{max}^{ext}$ given by \eqref{e-100-maxent}.

In order to prove the theorem we need to confirm that the latter is larger, that is
$$
\calS_{max}^{sec} =-\frac{1}{2}\ln\left(\frac{\sqrt D+(A-m)\cos\beta-B\sin\beta}{\sqrt D+(A-m)\cos\beta+B\sin\beta}\right)< \calS_{max}^{ext}=-\frac{1}{2}\ln\left(\frac{\sqrt{D}-B}{\sqrt{D}+B}\right),
$$
for any $\beta\in(0,\pi/2)$. Here $D=(A-m)^2+B^2$. This inequality is clearly equivalent to the simpler one
\begin{equation}\label{e-128-in}
   \frac{\sqrt D+(A-m)\cos\beta-B\sin\beta}{\sqrt D+(A-m)\cos\beta+B\sin\beta}> \frac{\sqrt{D}-B}{\sqrt{D}+B},\quad \forall \beta\in(0,\pi/2).
\end{equation}
We bring attention of the reader to the parameters $A$ and $m$ in the inequality \eqref{e-128-in}.  As we have shown in Lemma \ref{l-16}, $A\ge m$. If  we assume that $A= m$, then it follows from Lemma \ref{l-13} that the minimal von Neumann's parameter $\kappa_0$ associated with the extremal operator $T_h$ is such that $\kappa_0=0$ and hence the maximum c-entropy of L-system with such extremal $T_h$ is infinite, i.e., $\calS_{max}^{ext}=\infty$. In this theorem we are only interested in the case of finite  maximum c-entropy and thus the case when $A= m$ should not be considered. Thus, we can assume without loss of generality that $A>m$.


Consider the function
\begin{equation}\label{e-104-f}
    f(\beta)=\frac{\sqrt D+(A-m)\cos\beta-B\sin\beta}{\sqrt D+(A-m)\cos\beta+B\sin\beta}\quad\textrm{ on }\quad \beta\in(0,\pi/2).
\end{equation}
Since $A> m$, we have that  the derivative
\begin{equation}\label{e-105-f-prime}
    f'(\beta)=-\frac{2B\left(A-m+\sqrt D\cdot\cos\beta\right)}{\left(\sqrt D+(A-m)\cos\beta+B\sin\beta\right)^2}<0
\end{equation}
 for any   $\beta\in(0,\pi/2)$. Hence the function $f(\beta)$ in \eqref{e-104-f} decreases on $\beta\in(0,\pi/2)$ and 
$$
f\left(\frac{\pi}{2}-\right)=\frac{\sqrt{D}-B}{\sqrt{D}+B}.
$$
Note that  under our current assumption $f(\pi/2-)\ne0$ or otherwise $A=m$ (see Lemma \ref{l-13}).

Therefore, inequality \eqref{e-128-in} holds for $A>m$.
Consequently, a  Schr\"odinger L-system $\Theta_{\mu,h}$  with an accretive main operator $T_h$  achieves the maximum finite c-entropy whenever $T_h$ is extremal. In this case, according to Theorem \ref{t-21}, $\calS_{max}$ is determined by \eqref{e-100-maxent} while $h$ is given by \eqref{e-109-h} and $\mu$ is arbitrary in $\Bbb R\cup \{\infty\}$.
\end{proof}

\begin{remark}\label{r-30}

Note that if $\Theta_{\mu,h}$ is the  Schr\"odinger L-system referred to in Theorem \ref{t-24}, then the modulus of von Neumann's parameter of the corresponding main operator $T_h$ equals $\kappa_0$  given by \eqref{e-47}.
Moreover, if under the assumptions of Theorem \ref{t-24}, $\Theta_{\mu_1,h}$ and $\Theta_{\mu_2,h}$ are such L-systems that the corresponding impedance functions $V_{\Theta_{\mu_1,h}}(z)$ and $V_{\Theta_{\mu_2,h}}(z)$ belong to he generalized Donoghue classes $\sM_{\kappa_0}$ or $\sM_{\kappa_0}^{-1}$ respectively, then
 \begin{equation}\label{e-112-mu1}
    \mu_1=\frac{(A-mBF)(BF-1)+(m^2F+DF-B-mAF)(A-m)F}{(BF-1)^2+(A-m)^2F^2}
\end{equation}
and
\begin{equation}\label{e-114-mu2}
    \mu_2=-\frac{\mu_1 m+m^2+(A-m)^2+B^2}{\mu_1+m}.
\end{equation}
Here $$F=\frac{\sqrt{D}-\sqrt{C}}{B\sqrt{D}}$$ and all the other letters are defined above.

Indeed, let the L-systems $\Theta_{\mu_1,h}$ and $\Theta_{\mu_2,h}$  be such that
\begin{equation}\label{e-110-mu}
    V_{\Theta_{\mu_1,h}}(i)=ai\quad\textrm{ and}\quad V_{\Theta_{\mu_2,h}}(i)=\left(\frac{1}{a}\right)i,
\end{equation}
where
$$
a=\frac{1-\kappa_0}{1+\kappa_0}=\frac{1-\sqrt{\frac{\sqrt D-B}{\sqrt D+B}}}{1+\sqrt{\frac{\sqrt D-B}{\sqrt D+B}}}=\frac{\sqrt{D}-\sqrt{D-B^2}}{B}=\frac{\sqrt{D}-\sqrt{C}}{B}.
$$
Then \eqref{e-110-mu} will guarantee that  for the corresponding impedance functions  we have $V_{\Theta_{\mu_1,h}}(z)\in\sM_{\kappa_0}$ and $V_{\Theta_{\mu_2,h}}(z)\in\sM_{\kappa_0}^{-1}$.   By \eqref{1501}
$$
V_{\Theta_{\mu,h}}(i)=\frac{(A-Bi+\mu)\sqrt{D}}{(\mu+m)(A-Bi)-\mu m-m^2-D}.
$$
In view of \eqref{e-110-mu} we have the equation
$$
\frac{(A-Bi+\mu)\sqrt{D}}{(\mu+m)(A-Bi)-\mu m-m^2-D}=\frac{\sqrt{D}-\sqrt{C}}{B}i,
$$
which we are going to solve for $\mu_1$. To simplify the calculation process we set $F=a/\sqrt{D}=\frac{\sqrt{D}-\sqrt{C}}{B\sqrt{D}}$ and hence
\begin{equation}\label{e-111-mu}
\begin{aligned}
\mu_1&=\frac{A-mBF+(m^2F+DF-B-mAF)i}{BF-1+(A-m)Fi}\\
&=\frac{(A-mBF)(BF-1)+(m^2F+DF-B-mAF)(A-m)F}{(BF-1)^2+(A-m)^2F^2}\\
&\quad+i\frac{(m^2F+DF-B-mAF)(BF-1)-(A-mBF)(A-m)F}{(BF-1)^2+(A-m)^2F^2}.
\end{aligned}
\end{equation}
Substituting $D=(A-m)^2+B^2$ and $F=\frac{\sqrt{D}-\sqrt{C}}{B\sqrt{D}}$ we confirm that the imaginary part of $\mu_1$ in \eqref{e-111-mu} is zero. Thus,
\begin{equation}\label{e-112-mu1-1}
    \mu_1=\frac{(A-mBF)(BF-1)+(m^2F+DF-B-mAF)(A-m)F}{(BF-1)^2+(A-m)^2F^2}.
\end{equation}

To find $\mu_2$ we use \eqref{e-63-mu} and the reasoning in the proof of Theorem \ref{t-11} (see also \cite{ABT}) and get
\begin{equation}\label{e-114-mu2-2}
    \mu_2=\frac{\mu_1\RE h-|h|^2}{\mu_1-\RE h}=-\frac{\mu_1 m+m^2+(A-m)^2+B^2}{\mu_1+m}.
\end{equation}
Thus, the Schr\"odinger L-systems $\Theta_{\mu_1,h}$ and $\Theta_{\mu_2,h}$ with the desired properties  are uniquely constructed using the values of $h$ from \eqref{e-109-h},  $\mu_1$ from \eqref{e-112-mu1}, and $\mu_2$ from \eqref{e-114-mu2}.
\end{remark}

\subsection{Maximal c-entropy in accretive case} In this subsection we consider Schr\"odinger L-systems with maximal finite c-entropy and accretive state-space operator $\bA$. The following theorem describes an L-system with  $\beta$-sectorial main and state-space operators and maximal c-entropy.
\begin{theorem}\label{t-27}
An  L-system $\Theta_{\mu,h}$ of the form  \eqref{149} with a $\beta$-sectorial ($\beta\in(0,\pi/2)$) Schr\"odinger main operator $T_h$  and the maximum finite c-entropy $\calS_{max}$ determined by the formula \eqref{e-95-Smax} has a $\beta$-sectorial (with the same angle of sectoriality) state-space operator $\bA_{\mu,h}$ if and only if $\mu=\infty$ in \eqref{137}.
\end{theorem}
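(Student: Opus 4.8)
The plan is to read off the boundary value of $h$ from Theorem~\ref{t-23} (formula \eqref{e-126-maxent}), namely $\RE h=(\cos\beta)\sqrt D-m$ and $\IM h=(\sin\beta)\sqrt D$, which in particular encode the defining relation $\RE h+m=(\cot\beta)\IM h$ of a $\beta$-sectorial boundary value (cf. \eqref{e-48}). First I would express the quadratic form of the state-space operator on $\calH_+$ in boundary coordinates: for $y\in\calH_+$ put $a=y(\ell)$, $b=y'(\ell)$ and $I=\int_\ell^\infty(|y'|^2+q|y|^2)\,dx$. Integrating by parts in \eqref{137} and pairing the distributions $\delta(x-\ell),\delta'(x-\ell)$ against $y$, one gets $(\bA_{\mu,h}y,y)=I+\frac{|b|^2-2h\,\RE(b\bar a)+\mu h|a|^2}{\mu-h}$. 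Separating real and imaginary parts (and using \eqref{e-31} to introduce $\xi=\frac{\mu\RE h-|h|^2}{\mu-\RE h}$) yields $\IM(\bA_{\mu,h}y,y)=\frac{\IM h}{|\mu-h|^2}\,|\mu a-b|^2\ge0$, consistent with $\IM\bA_{\mu,h}=(\cdot,g)g$ and \eqref{146}, together with a real part of the shape $I+(\text{a Hermitian form in }a,b)$.

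The reduction step uses the variational fact that $\inf\{I:\ y\in\calH_+,\ y(\ell)=a\}=m|a|^2$, where $m=m_\infty(-0)$; the infimizer is the $L^2$-solution of $\dA^*y=0$, and its boundary slope is exactly the Kre\u{\i}n condition, so this is just nonnegativity of $\dA$ read through the Weyl function. Prescribing in addition $y'(\ell)=b$ does not raise this infimum, since one may add a function vanishing at $\ell$ with arbitrary slope there and arbitrarily small form (a concentrating bump). Consequently $\bA_{\mu,h}$ is $\beta$-sectorial if and only if the $2\times2$ Hermitian form $\mathcal Q_\mu(a,b):=m|a|^2+\RE(\bA_{\mu,h}y,y)\big|_{I=m|a|^2}-(\cot\beta)\,\IM(\bA_{\mu,h}y,y)$ on $\dC^2$ is positive semi-definite. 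I would also record the a priori inequality that, because $T_h\subset\bA_{\mu,h}$ and $\beta$ is the \emph{exact} angle of $T_h$, the sectoriality angle of $\bA_{\mu,h}$ is always at least $\beta$; hence being $\beta$-sectorial is equivalent to having the \emph{same} angle, i.e. to $\mathcal Q_\mu\ge0$.

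Finally I would compute the entries of $\mathcal Q_\mu$: writing $\alpha,\gamma,\rho$ for the coefficients of $|a|^2$, $|b|^2$ and $\RE(b\bar a)$, substitution of $\xi$ and of the relation $\RE h+m=(\cot\beta)\IM h$ collapses the algebra to the clean identities $\alpha=m-\mu\rho$ and $\mu\gamma+\rho=1$, whence $\det\mathcal Q_\mu=\alpha\gamma-\rho^2=m\gamma-\rho=-\frac{D}{|\mu-h|^2}<0$ for every finite $\mu$ (recall $D>0$). Thus $\mathcal Q_\mu$ is indefinite and $\bA_{\mu,h}$ fails to be $\beta$-sectorial whenever $\mu\in\dR$. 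For $\mu=\infty$ the state-space operator is \eqref{e-64-A-min}, $\bA_{\infty,h}y=-y''+q(x)y-[y'(\ell)-hy(\ell)]\delta(x-\ell)$, for which the $b$-dependence drops out and $(\bA_{\infty,h}y,y)=I+h|a|^2$; then $\mathcal Q_\infty\equiv0\ge0$ and the supremum of $\IM/\RE$ equals $\frac{\IM h}{m+\RE h}=\tan\beta$, so the angle is exactly $\beta$. Combining the two cases gives the claimed equivalence.

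The step I expect to be the crux is the determinant computation: a priori it is a rational function of $\mu$ carrying the messy parameter $\xi$, and the whole theorem hinges on it being strictly negative for every finite $\mu$ while degenerating only as $\mu\to\infty$. The pleasant fact—which I would verify carefully—is that after inserting the maximal-entropy value of $h$ the two identities $\alpha=m-\mu\rho$ and $\mu\gamma+\rho=1$ reduce $\det\mathcal Q_\mu$ to $-D/|\mu-h|^2$. I would also take care to justify rigorously the infimum value $m|a|^2$ of $I$ and its independence of the prescribed slope $b$, since that fact is precisely what legitimately converts the operator-sectoriality question into the finite-dimensional positivity question for $\mathcal Q_\mu$.
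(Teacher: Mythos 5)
Your proposal is correct, but it takes a genuinely different route from the paper's. The paper's proof is essentially a two-line reduction: it invokes the general structural result of \cite[Theorem 10.6.4]{ABT} (see also \cite{B2011}, \cite{BT-15}) — a ($*$)-extension $\bA_{\mu,h}$ of a $\beta$-sectorial operator $T_h$ with \emph{exact} angle $\beta$ is itself $\beta$-sectorial with the same angle if and only if $\mu=+\infty$ in \eqref{137} — and then uses Theorem~\ref{t-23} only to pin down the boundary parameter $h$. You instead reprove that structural fact from scratch in the Schr\"odinger setting: you expand $(\bA_{\mu,h}y,y)$ in the boundary data $(y(\ell),y'(\ell))$, use the variational identity $\inf\{I:\,y(\ell)=a\}=m_\infty(-0)|a|^2$ (which is precisely the form-theoretic content of item (3) of Theorem~\ref{t-8}) to reduce $\beta$-sectoriality of $\bA_{\mu,h}$ to positive semi-definiteness of a $2\times 2$ boundary form, and compute its determinant. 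Your algebra checks out: the identities $\alpha=m-\mu\rho$ and $\mu\gamma+\rho=1$ in fact hold for \emph{every} real $\mu$ and every $h$ (they do not need the maximal-entropy value), and inserting the sectoriality relation \eqref{e-48} gives
$$
\det\mathcal{Q}_\mu\;=\;m\gamma-\rho\;=\;-\,\frac{(\RE h+m)^2+(\IM h)^2}{|\mu-h|^2},
$$
which for the maximal-entropy $h$ of \eqref{e-126-maxent} equals $-D/|\mu-h|^2<0$; since this is negative whenever $\IM h>0$, your argument actually establishes the full statement of the cited ABT theorem for Schr\"odinger ($*$)-extensions, not only the maximal-entropy case, while the $\mu=\infty$ computation via \eqref{e-64-A-min} correctly yields semi-definiteness and exactness of the angle. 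The trade-off is clear: the paper's route is short and pushes all analytic subtleties (convergence of the Dirichlet-type integral $I$ on $\calH_+$, integration by parts at infinity, attainability of the infimum and its independence of the prescribed slope $y'(\ell)$) into the cited literature, whereas your route is self-contained and quantitative but its rigor rests on exactly those variational facts, which you rightly flag as the points needing careful justification — they are standard in this context (they underlie Theorem~\ref{t-8}) but they are not free.
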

\begin{proof}
It was shown in \cite[Theorem 10.6.4]{ABT} (see also \cite{B2011}, \cite{BT-15}) that if $\bA_{\mu,h}$ is a ($*$)-extension of an $\beta$-sectorial operator $T_h$
with the exact angle of sectoriality $\beta\in(0,\pi/2)$, then $\bA_{\mu,h}$ is an $\beta$-sectorial ($*$)-extension of $T_h$ (with the same angle of sectoriality) if and only if $\mu=+\infty$. The rest follows from Theorem \ref{t-23}.
\end{proof}

\begin{theorem}\label{t-28}
An  L-system $\Theta_{\mu,h}$ of the form  \eqref{149} with a $\beta$-sectorial ($\beta\in(0,\pi/2)$) Schr\"odinger main operator $T_h$  and the maximum finite c-entropy $\calS_{max}$ determined by the formula \eqref{e-95-Smax} has an extremal state-space operator $\bA_{\mu,h}$ if and only if
\begin{equation}\label{e10-134}
\mu=(2\csc2\beta)\sqrt{(\RE m_\infty(i)-m_\infty(-0))^2+(\IM m_\infty(i))^2}-m_\infty(-0).
\end{equation}
\end{theorem}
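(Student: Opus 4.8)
The plan is to fix the main operator from the maximal-entropy constraint and then to read off the value of $\mu$ that renders the state-space operator extremal from the description of $(*)$-extensions in \cite{ABT}. First I would apply Theorem \ref{t-23}: demanding that $T_h$ be $\beta$-sectorial and that $\Theta_{\mu,h}$ carry the maximal finite c-entropy $\calS_{max}$ pins the boundary parameter down to
\begin{equation*}
\RE h=\sqrt D\cos\beta-m,\qquad \IM h=\sqrt D\sin\beta,
\end{equation*}
where $A=\RE m_\infty(i)$, $B=-\IM m_\infty(i)$, $m=m_\infty(-0)$ and $D=(A-m)^2+B^2$, while $\mu$ is left arbitrary in $\bbR\cup\{\infty\}$. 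Thus $T_h$ is completely determined; only the parameter $\mu$, which enters the state-space operator $\bA_{\mu,h}$ via \eqref{137}, is still free, and the assertion is that $\bA_{\mu,h}$ is extremal for exactly one value of $\mu$.

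Next I would invoke the analysis of $(*)$-extensions of a $\beta$-sectorial Schr\"odinger operator from \cite{ABT}, the same machinery underlying the criterion $\mu=+\infty$ of Theorem \ref{t-27} (cf. \cite{B2011}, \cite{BT-15}). The accretive members of the family \eqref{137} are precisely those with $\mu$ in the range delimited by \eqref{151}; within that range the exact angle of $\bA_{\mu,h}$ varies, equalling $\beta$ at $\mu=+\infty$ and passing into the extremal regime at a single distinguished $\mu=\mu^{\mathrm{ext}}$. I would record $\mu^{\mathrm{ext}}$ as the rational function of $\RE h$, $\IM h$ and $m$ produced by applying the sectoriality inequality \eqref{e8-29} to the point-interaction operator \eqref{137}.

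Finally I would substitute the maximal-entropy values of $h$ into $\mu^{\mathrm{ext}}$. Writing $\RE h+m=\sqrt D\cos\beta$, $\IM h=\sqrt D\sin\beta$ and using $(\RE h+m)^2+(\IM h)^2=D$ together with the exact-angle relation $\tan\beta=\IM h/(\RE h+m)$ from Theorem \ref{t-8}(6), the expression for $\mu^{\mathrm{ext}}+m$ reduces, through the identity
\begin{equation*}
\cot\beta+\tan\beta=\frac{1}{\sin\beta\cos\beta}=2\csc 2\beta,
\end{equation*}
to $\sqrt D\,(\cot\beta+\tan\beta)=2\csc 2\beta\,\sqrt D$, which is exactly \eqref{e10-134}; running the computation backwards gives the converse implication.

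I expect the decisive difficulty to lie in the second step, namely in isolating from the $(*)$-extension calculus of \cite{ABT} the exact algebraic condition characterizing extremality of the bi-extension $\bA_{\mu,h}$ on the rigged space $\calH_+$ — a condition genuinely finer than the accretivity threshold \eqref{151}, and distinct from the sectoriality criterion for the main operator $T_h$ itself, since the quadratic form of a state-space operator is measured through the $\calH_-\times\calH_+$ pairing rather than on $\calH$. Once that condition is in hand, the trigonometric reduction above is routine, and the freedom of $\mu$ inherited from Theorem \ref{t-23} guarantees that no further constraints on $m_\infty$ are needed beyond $\RE m_\infty(i)\ne m_\infty(-0)$ (so that $\kappa_0\ne0$ and $\calS_{max}$ is finite).
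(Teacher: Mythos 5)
Your route is structurally the same as the paper's: fix $h$ by Theorem \ref{t-23}, identify extremality of the state-space operator $\bA_{\mu,h}$ with the borderline case of the accretivity condition \eqref{151}, and then substitute the maximal-entropy values of $\RE h$ and $\IM h$. The step you defer as the ``decisive difficulty'' is precisely what the paper settles by citation rather than computation: \cite[Theorem 10.6.5]{ABT} (the companion of the result behind Theorem \ref{t-27}) states that a $(*)$-extension $\bA_{\mu,h}$ of a $\beta$-sectorial $T_h$ with exact angle $\beta$ is accretive but not sectorial for any angle if and only if
\begin{equation*}
\mu=\frac{(\IM h)^2}{m_\infty(-0)+\RE h}+\RE h,
\end{equation*}
i.e.\ exactly equality in \eqref{151}. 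Since you neither cite nor actually derive this criterion, your second step is a genuine gap in the blind write-up, though of the fillable kind: no new analysis of the quadratic form on the rigged space is required, only this reference.

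The substantive defect is in your closing algebra. From $\mu+m=\IM h\,(\tan\beta+\cot\beta)$ (which is correct, and is the paper's intermediate formula $\mu=(2\csc 2\beta)\IM h-m$) and the maximal-entropy value $\IM h=\sqrt D\,\sin\beta$ of \eqref{e-102-D}, where $D=(\RE m_\infty(i)-m_\infty(-0))^2+(\IM m_\infty(i))^2$, one gets
\begin{equation*}
\mu+m=\sqrt D\,\sin\beta\cdot\frac{1}{\sin\beta\cos\beta}=\sqrt D\,\sec\beta,
\end{equation*}
whereas you assert the reduction is to $\sqrt D\,(\tan\beta+\cot\beta)=2\csc 2\beta\,\sqrt D$: in passing from $\IM h\,(\tan\beta+\cot\beta)$ to $\sqrt D\,(\tan\beta+\cot\beta)$ you replaced $\IM h$ by $\sqrt D$, silently dropping the factor $\sin\beta$. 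Since $\sec\beta=\sin\beta\cdot 2\csc 2\beta<2\csc 2\beta$ for every $\beta\in(0,\pi/2)$, the value your method actually produces, $\mu=\sec\beta\,\sqrt D-m_\infty(-0)$, never coincides with \eqref{e10-134}. You should be aware that the paper's own proof carries the identical tension: it correctly derives $\mu=(2\csc 2\beta)\IM h-m$ and then claims that substituting \eqref{e-102-D} yields \eqref{e10-134}, which it does not; so the mismatch between the $2\csc 2\beta$ in the statement and the $\sec\beta$ the computation delivers is inherited from the source. But your proposal reproduces the slip rather than detecting it, and as written the final step of your argument is false.
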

\begin{proof}
It follows from \cite[Theorem 10.6.5]{ABT} (see also \cite{B2011}, \cite{BT-15})  that if $\bA_{\mu,h}$ is a ($*$)-extension of an $\beta$-sectorial operator $T_h$
with the exact angle of sectoriality $\beta\in(0,\pi/2)$, then $\bA_{\mu,h}$ is accretive but not  $\beta$-sectorial for any $\beta\in(0,\pi/2)$ ($*$)-extension of $T_h$  if and only if in \eqref{137}
\begin{equation}\label{e-117-mu}
\mu=\frac{(\IM h)^2}{m_\infty(-0)+\RE h}+\RE h.
\end{equation}
 Taking into account that in our case $\IM h$ and $\RE h$ are related by \eqref{e-48}, we substitute their values into \eqref{e-117-mu} to get 
$$
\begin{aligned}
\mu&=\frac{(\IM h)^2}{m+\RE h}+\RE h=\frac{(\IM h)^2}{m+(\cot\beta)\IM h-m}+(\cot\beta)\IM h-m\\
&=(\tan\beta)\IM h+(\cot\beta)\IM h-m=(\tan\beta+\cot\beta)\IM h-m\\
&=(2\csc2\beta)\IM h-m,
\end{aligned}
$$
where $m=m_\infty(-0)$. We know from Theorem \ref{t-23} that if the L-system $\Theta_{\mu,h}$ has the maximum c-entropy, then $\IM h$ is given by \eqref{e-102-D}. Substituting the value of $\IM h$ into the above we obtain \eqref{e10-134}.
\end{proof}
Now we focus on the case when  the main operator of a Schr\"odinger L-systen is extremal.
\begin{theorem}\label{t-26}
An  L-system $\Theta_{\mu,h}$ of the form  \eqref{149} with an extremal Schr\"odinger main operator $T_h$  and the maximum finite c-entropy $\calS_{max}$ determined by the formula \eqref{e-100-maxent} has an accretive state-space operator $\bA_{\mu,h}$ if and only if $\mu=\infty$.
\end{theorem}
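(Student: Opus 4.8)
The plan is to reduce the equivalence to the uniqueness theorem for accretive $(*)$-extensions recorded immediately after Theorem \ref{t-8}, and then to identify the single surviving value of $\mu$. First I would invoke Theorem \ref{t-21}: since $\Theta_{\mu,h}$ carries the maximum finite c-entropy $\calS_{max}$ with an extremal main operator, its boundary parameter is forced to be $h=-m+i\sqrt{(A-m)^2+B^2}$, where $A=\RE m_\infty(i)$, $B=-\IM m_\infty(i)$, and $m=m_\infty(-0)$. In particular $\RE h=-m=-m_\infty(-0)$, which by part (5) of Theorem \ref{t-8} re-confirms that $T_h$ is accretive extremal.

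Next I would appeal to the structural result stated after Theorem \ref{t-8}: an accretive Schr\"odinger operator $T_h$ has a \emph{unique} accretive $(*)$-extension precisely when $\RE h=-m_\infty(-0)$, and in that boundary case the unique accretive $(*)$-extension is the operator written in \eqref{153}. Since our extremal $T_h$ satisfies $\RE h=-m$, it therefore admits exactly one accretive $(*)$-extension, namely \eqref{153}. With this in hand the equivalence splits cleanly: the ``if'' direction is simply the accretivity of \eqref{153}, while the ``only if'' direction is the uniqueness just quoted, provided I can show that \eqref{153} is exactly the $\mu=\infty$ member of the family \eqref{137}.

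That identification is the only real computation, and I expect it to be the main (indeed the sole) obstacle. I would carry it out by letting $\mu\to\infty$ in \eqref{137} and using $\frac{\mu}{\mu-h}\to1$, $\frac{1}{\mu-h}\to0$, so that
\[
-\frac{1}{\mu-h}\bigl[y'(\ell)-hy(\ell)\bigr]\bigl[\mu\delta(x-\ell)+\delta'(x-\ell)\bigr]\longrightarrow\bigl[hy(\ell)-y'(\ell)\bigr]\delta(x-\ell),
\]
which is precisely the singular term added in \eqref{153}. Since \eqref{137} is a one-to-one parametrization of all $(*)$-extensions of $T_h$ by $\mu\in\dR\cup\{\infty\}$, and \eqref{153} is a genuine $(*)$-extension, this limit pins the unique accretive extension to $\mu=\infty$. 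Combining the two directions yields the stated equivalence, with $\calS_{max}$ and the remaining parameters already fixed by Theorem \ref{t-21}.
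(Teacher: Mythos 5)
Your proof is correct, but it follows a genuinely different route from the paper's. The paper disposes of the theorem in two sentences by citing \cite[Theorem 10.6.4]{ABT}: a ($*$)-extension $\bA_{\mu,h}$ preserves the exact angle of sectoriality of $T_h$ if and only if $\mu=\infty$, and an extremal $T_h$ therefore admits exactly one extremal ($*$)-extension, the one with $\mu=\infty$. (Implicit in that argument is the observation that an accretive ($*$)-extension of an extremal operator must itself be extremal, since $\beta$-sectoriality of $\bA_{\mu,h}$ would force $\beta$-sectoriality of $T_h$.) You instead reduce everything to the Phillips--Kato material the paper quotes right after Theorem \ref{t-8}: extremality gives $\RE h=-m_\infty(-0)$ (via Theorem \ref{t-21} and part (5) of Theorem \ref{t-8}), the quoted uniqueness criterion then says $T_h$ has exactly one accretive ($*$)-extension, namely \eqref{153}, and the limit computation $\frac{\mu}{\mu-h}\to 1$, $\frac{1}{\mu-h}\to 0$ identifies \eqref{153} with the $\mu=\infty$ member of the parametrization \eqref{137} --- an identification the paper itself confirms in \eqref{e-64-A-min}. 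What your approach buys is self-containedness and directness: it works entirely with results already stated in the paper, it addresses accretivity (the actual property in the statement) rather than angle preservation, and so it avoids the implicit sectoriality-inheritance step; what the paper's approach buys is brevity and a uniform treatment, since the same citation also powers the sectorial analogues (Theorems \ref{t-27} and \ref{t-28}).
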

\begin{proof}
As it was shown in \cite[Theorem 10.6.4]{ABT} (see also \cite{B2011}, \cite{BT-15}), a state-space operator $\bA_{\mu,h}$ preserves the exact angle of sectoriality of $T_h$ if and only if $\mu=\infty$. Here our operator $T_h$ is extremal  and thus can only admit one extremal $(*)$-extension $\bA_{\mu,h}$ with $\mu=\infty$.
\end{proof}

At this point again we  are going to look at the combined class of $\beta$-sectorial and extremal accretive operators to see what case provides us with an L-system with maximal finite c-entropy.
\begin{theorem}\label{t-29}
An  L-system $\Theta_{\mu,h}$ of the form  \eqref{149} with an accretive Schr\"odinger main operator $T_h$  and accretive state-space operator $\bA_{\mu,h}$  achieves the maximum finite c-entropy $\calS_{max}$ determined by \eqref{e-100-maxent} when $T_h$ is extremal.

In this case $h$ is given by \eqref{e-109-h}, $\mu=\infty$, and the quasi-kernel of $\RE\bA_{\mu,h}$ is the Krein-von Neumann extension defined by the formula
\begin{equation}
\label{200} \left\{\begin{array}{l}
 A_Ky=-y^{\prime\prime}+q(x)y, \\
 y^{\prime}(\ell)+m_\infty(-0)y(\ell)=0.
 \end{array}\right.
\end{equation}
\end{theorem}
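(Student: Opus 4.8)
The plan is to assemble the statement from three pieces already established in this subsection: the identification of the maximizer of c-entropy over the accretive class (Theorem~\ref{t-24}), the accretivity criterion for the state-space operator (Theorem~\ref{t-26}), and the explicit description of the quasi-kernel in formula~\eqref{e-31}. First I would recall the dichotomy opening this subsection: since $T_h$ is accretive, it is either $\beta$-sectorial for some $\beta\in(0,\pi/2)$ or extremal accretive. Theorem~\ref{t-24} already shows that over this combined class the maximum finite c-entropy $\calS_{max}$ is attained exactly when $T_h$ is extremal, that $\calS_{max}$ is then given by \eqref{e-100-maxent}, and that the boundary parameter equals $h=-m+i\sqrt{(A-m)^2+B^2}$ as in \eqref{e-109-h}. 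In particular $\RE h=-m_\infty(-0)$, which is the defining relation for extremal accretivity in part (5) of Theorem~\ref{t-8}.

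Next, to ensure that the state-space operator $\bA_{\mu,h}$ is itself accretive while $T_h$ is extremal, I would invoke Theorem~\ref{t-26}: for an extremal $T_h$ the only $(*)$-extension that is accretive is the one with $\mu=\infty$. This fixes $\mu$ and pins down $\Theta_{\mu,h}$ uniquely, so the parameters stated in \eqref{e-109-h} (with $\mu=\infty$) are forced.

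The remaining step, and the only genuine computation, is to identify the quasi-kernel of $\RE\bA_{\mu,h}$. By \eqref{e-31} the quasi-kernel $\hat A_\xi$ carries the boundary condition $y'(\ell)=\xi y(\ell)$ with $\xi=\frac{\mu\RE h-|h|^2}{\mu-\RE h}$. Passing to the limit $\mu\to\infty$ forced by Theorem~\ref{t-26} gives $\xi=\RE h$, and combining this with the extremality relation $\RE h=-m_\infty(-0)$ yields $\xi=-m_\infty(-0)$. The boundary condition then reads $y'(\ell)+m_\infty(-0)y(\ell)=0$, which is precisely \eqref{200}, i.e.\ the Kre\u{\i}n--von Neumann extension $A_K$. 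I expect no real obstacle: the substantive work (the maximization and the accretivity criterion) has already been carried out in Theorems~\ref{t-24} and~\ref{t-26}, and the final identification is just the $\mu\to\infty$ limit in the expression for $\xi$ together with $\RE h=-m_\infty(-0)$.
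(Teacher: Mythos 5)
Your proposal is correct, and its skeleton is the same as the paper's: Theorem~\ref{t-24} identifies the extremal $T_h$ as the maximizer of finite c-entropy over the accretive class (with $\calS_{max}$ given by \eqref{e-100-maxent} and $h$ by \eqref{e-109-h}), and Theorem~\ref{t-26} forces $\mu=\infty$ once $\bA_{\mu,h}$ is also required to be accretive. The only place you diverge is the identification of the quasi-kernel. The paper writes out $\bA_{\infty,h}$ explicitly via \eqref{e-64-A-min}, computes $\RE\bA_{\infty,h}\,y=-y^{\prime\prime}+q(x)y+\left[y^{\prime}(\ell)+m_\infty(-0)y(\ell)\right]\delta(x-\ell)$ using $\RE h=-m_\infty(-0)$, and reads off from this expression that $A_K$ of \eqref{200} is the quasi-kernel of $\RE\bA_{\infty,h}$. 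You instead take the boundary parameter $\xi=\frac{\mu\RE h-|h|^2}{\mu-\RE h}$ from \eqref{e-31} and pass to the limit $\mu\to\infty$, obtaining $\xi=\RE h=-m_\infty(-0)$. Both routes are sound; the only caveat for yours is that \eqref{e-31} is stated for finite real $\mu$, so invoking it at $\mu=\infty$ is strictly a limiting extension of that formula --- but this is precisely how the paper itself uses \eqref{e-31} in its examples (e.g., \eqref{ex-81} and the quasi-kernel $\hat A_\infty$ in Example 1), so no gap results. What the paper's computation buys is the explicit form of $\RE\bA_{\infty,h}$, which makes the inclusion $\RE\bA_{\infty,h}\supset A_K$ visible directly; what yours buys is brevity, avoiding the delta-function calculation entirely.
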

\begin{proof}
We know that according to Theorem \ref{t-24} among all the  L-system $\Theta_{\mu,h}$ of the form  \eqref{149} with an accretive Schr\"odinger main operator $T_h$  and any $\mu\in\Bbb R\cup \{\infty\}$, the L-system that has the maximum  finite c-entropy is the one with  the extremal main operator $T_h$. On the other hand, since we are given an extra condition that the state-space operator $\bA_{\mu,h}$ is also accretive, then Theorem \ref{t-26} provides us with $\mu=\infty$ for this case. Consequently, $\bA_{\mu,h}$ is given by \eqref{e-64-A-min} that is
\begin{equation}\label{e-119}
    \begin{split}
&\bA_{\mu,h}=\bA_{\infty,h}\, y=-y^{\prime\prime}+q(x)y-[y^{\prime}(\ell)-h y(\ell)]\, \delta (x-\ell), \\
&\bA^*_{\mu,h}=\bA^*_{\infty,h}\, y=-y^{\prime\prime}+q(x)y-[y^{\prime}(\ell)-\overline{h}\,y(\ell)]\,\delta(x-\ell).\\
\end{split}
\end{equation}
Using \eqref{e-119} one easily gets
\begin{equation}\label{e-120}
\RE\bA_{\infty,h}=-y^{\prime\prime}+q(x)y+\left[y^{\prime}(\ell)+m_\infty(-0)y(\ell)\right]\delta(x-\ell).
\end{equation}
Clearly, \eqref{e-120} implies that  $\RE\bA_{\infty,h} \supset A_K$ and $A_K$ given by \eqref{200} is the quasi-kernel for $\RE\bA_{\infty,h}$.
\end{proof}

\begin{remark}\label{r-29}
Note that if an  L-system $\Theta_{\mu,h}$ of the form  \eqref{149} has an accretive state-space operator, then its impedance function $V_{\Theta_{\mu,h}}(z)$ cannot belong to any of the generalized Donoghue classes.

Indeed, we know from \cite[Theorem 9.8.2]{ABT} that if an L-system has an accretive state-space operator, then its impedance function is a Stieltjes function and thus has the integral representation
\begin{equation}\label{e8-94}
V_{\Theta_{\mu,h}}(z) =\gamma+\int\limits_0^\infty\frac {d\sigma(t)}{t-z},
\end{equation}
where $\gamma\ge0$ and $\sigma(t)$ is a non-decreasing on $[0,+\infty)$  function such that $\int^\infty_0\frac{dG(t)}{1+t}<\infty.$ On the other hand, if we assume that a Stieltjes function $V_{\Theta_{\mu,h}}(z)$ belonged to a generalized Donoghue class, then it would have admited integral representation of the form \eqref{hernev-0}, that is \begin{equation}\label{hernev-119}
V_{\Theta_{\mu,h}}(z)=\int_0^{+\infty} \left(\frac{1}{t-z}-\frac{t}{1+t^2}\right)d\sigma(t).
\end{equation}
However, calculating the real part of $V_{\Theta_{\mu,h}}(i)$ using both \eqref{e8-94} and \eqref{hernev-119} and then  setting the results equal  leads to
$$
\RE V_{\Theta_{\mu,h}}(i)=0=\gamma+\int\limits_0^\infty\frac {td\sigma(t)}{t^2+1},
$$
which is a contradiction since $\gamma\ge0$ and $\int_0^\infty\frac {td\sigma(t)}{t^2+1}>0$ (see \cite{ABT}). Therefore we have shown that a Stieltjes function $V_{\Theta_{\mu,h}}(z)$ can be written as
$$
V_{\Theta_{\mu,h}}(z)=Q+\int_0^{+\infty} \left(\frac{1}{t-z}-\frac{t}{1+t^2}\right)d\sigma(t),
$$
with
\begin{equation}\label{e-135-Q}
    Q=\gamma+\int\limits_0^\infty\frac {td\sigma(t)}{t^2+1}>0.
\end{equation}
Consequently, none of the Stieltjes impedance functions $V_{\Theta_{\mu,h}}(z)$ can belong to any of the generalized Donoghue classes.
\end{remark}

\section{Table of dual c-entropy problems solutions}
In this subsection we  summarize the results on dual c-entropy optimization problems and present them in the tabular form (see Table \ref{Table-1}).
\begin{table}[ht]
\centering
\begin{tabular}{|c|c|c|c|}
\hline
 &  &  &\\
 L-system& First dual  & Second dual  & L-system \\
  property& problem & problem &uniqueness\\
   &  &  &\\
  \hhline{|=|=|=|=|}
&  &  &\\
  $V_\Theta(z)\in\sM_\kappa$ & Theorem \ref{t-11} & Theorem \ref{t-12} & Unique\\
  &  &  &\\  \hline
 &  &  &\\
  $V_\Theta(z)\in\sM_\kappa^{-1}$ & Theorem \ref{t-10} & Theorem \ref{t-13} & Unique\\
  &  &  &\\  \hline
&  &  &\\
  $T_h$ is extremal & Theorem \ref{t-20} & Theorem \ref{t-21} & Not unique\\
  &  &  &\\  \hline
 &  &  &\\
  $T_h$ is $\beta$-sectorial  & Theorem \ref{t-22} & Theorem \ref{t-23} & Not unique\\
  &  &  &\\  \hline
     \multicolumn{1}{l}{} & \multicolumn{1}{l}{} & \multicolumn{1}{l}{}& \multicolumn{1}{l}{}
\end{tabular}
\caption{Solutions to dual c-entropy problems}
\label{Table-1}
\end{table}

 In the very left column of the table we list the properties of the Schr\"odinger L-systems under consideration.
The second and the third columns of the table indicate the theorems that contain solutions to the first and the second dual c-entropy problems, respectively. The last column of Table \ref{Table-1} addresses the uniqueness of the Schr\"odinger L-system solving either of the dual c-entropy problems. We have already noted in Sections \ref{s7} and \ref{s8} that the Schr\"odinger L-systems $\Theta_{\mu,h}$ referred to in Theorems \ref{t-10}--\ref{t-13} are unique with the exact values of the defining parameters $\mu$ and $h$ provided. On the other hand, when the dual c-entropy problems are being solved for the classes of Schr\"odinger L-systems with extremal or $\beta$-sectorial main operators $T_h$ the answer is not unique. Namely, Theorems \ref{t-20}--\ref{t-23} describe  the entire family of solutions. These solutions $\Theta_{\mu,h}$ are parameterized by a particular value of $h$ and an arbitrary parameter $\mu\in\Bbb R\cup \{\infty\}$. Note that the case when the impedance function $V_\Theta(z)$ belongs to the Donoghue class $\sM$ deals with infinite c-entropy (see Theorem \ref{t-6}) and therefore is not presented in the table.

In the next section we will illustrate our findings   by two examples.

\section{Examples}

We conclude this paper with a couple of simple illustrations. Consider the differential expression 
\[
l_\nu=-\frac{d^2}{dx^2}+\frac{\nu^2-1/4}{x^2},\;\; x\in [1,\infty)
\]
of order $\nu\ge \frac{1}{2}$ in the Hilbert space $\calH=L^2[1,\infty)$.
The minimal symmetric operator
\begin{equation}\label{ex-128}
 \left\{ \begin{array}{l}
 \dA\, y=-y^{\prime\prime}+\frac{\nu^2-1/4}{x^2}y \\
 y(1)=y^{\prime}(1)=0 \\
 \end{array} \right.
\end{equation}
 generated by this expression and boundary conditions has deficiency indices $(1,1)$ and is obviously nonnegative. Consider also the operator
\begin{equation}\label{ex-79}
 \left\{ \begin{array}{l}
 T_h\, y=-y^{\prime\prime}+\frac{\nu^2-1/4}{x^2}y \\
 y'(1)=h y(1). \\
 \end{array} \right.
\end{equation}

\subsection*{Example 1} Let $\nu=1/2$. It is known \cite{ABT} that in this case
$$
m_{\infty,\frac{1}{2}}(z)= -{i}{\sqrt{z}}\quad\textrm{ and hence}\quad m_{\infty,\frac{1}{2}}(i)=\frac{1}{\sqrt2}-\frac{1}{\sqrt2}i.
$$
The minimal symmetric operator then becomes
$$
 \left\{ \begin{array}{l}
 \dA\, y=-y^{\prime\prime} \\
 y(1)=y^{\prime}(1)=0. \\
 \end{array} \right.
$$
Note that in this case
$$\RE m_{\infty,\frac{1}{2}}(i)=\frac{1}{\sqrt2}> m_{\infty,\frac{1}{2}}(0)=0,$$
which taking into account that the symmetric operator $\dA$ is nonnegative,  illustrates the result of Lemma \ref{l-16}.

To construct a family of L-systems with von Neumann's parameter $\kappa=0$ for this case we set
$$
h=-m_{\infty,\frac{1}{2}}(i)=-\frac{1}{\sqrt2}+\frac{1}{\sqrt2}i.
$$
Then the main operator is given by
\begin{equation}\label{ex-80}
 \left\{ \begin{array}{l}
 T_{h_0}\, y=-y^{\prime\prime} \\
 y'(1)=\left(-\frac{1}{\sqrt2}+\frac{1}{\sqrt2}i\right) y(1) \\
 \end{array} \right.
\end{equation}
and it will be shared by all the family of L-systems with $\kappa=0$ and
\begin{equation}\label{e-140-h0}
h_0=-\frac{1}{\sqrt2}+\frac{1}{\sqrt2}i.
\end{equation}
Clearly, any L-system $\Theta_{\mu,h_0}$ has infinite c-entropy for all $\mu\in\Bbb R\cup\{\infty\}$.
The quasi-kernel of the real part of the state-space operator of this family of L-systems is determined by \eqref{e-31} as follows
\begin{equation}\label{ex-81}
 \left\{ \begin{array}{l}
\hat A_{\mu}\, y=-y^{\prime\prime} \\
 y'(1)=-\frac{\mu+\sqrt2}{\sqrt2 \mu+1} y(1) .\\
 \end{array} \right.
\end{equation}
To construct an L-system with an accretive state-space operator, we take  $\mu=\infty$ to have
$$
 \left\{ \begin{array}{l}
\hat A_{\infty}\, y=-y^{\prime\prime} \\
 y'(1)=-\frac{1}{\sqrt2} y(1) .\\
 \end{array} \right.
$$
The state-space operator of the L-system $\Theta_{\infty,h_0}$ with $\kappa=0$, $h_0$ defined by \eqref{e-140-h0}, and $\mu=\infty$ is (see \eqref{137})
\begin{equation}\label{ex-98}
\begin{split}
&\bA_{\infty, h_0}\, y=-y^{\prime\prime}-\frac{1}{\sqrt2}[{\sqrt2} y^{\prime}(1)+\left(1-i\right)y(1)]\,\delta (x-1), \\
&\bA^*_{\infty, h_0}\, y=-y^{\prime\prime}-\frac{1}{\sqrt2}[{\sqrt2}y^{\prime}(1)+\left(1+i\right)y(1)]\,\delta(x-1).
\end{split}
\end{equation}
Also the channel operator $K_{\infty, h_0}\,c=cg_{\infty, h_0}$, $(c\in \dC)$, where (see \eqref{146})
$$
g_{\infty, h_0}=2^{-\frac{1}{4}}\delta (x-1).
$$
Then $\Theta_{\infty, h_0}$ has the form
\begin{equation}\label{ex-99-system}
\Theta_{\infty, h_0}= \begin{pmatrix}
\bA_{\infty, h_0}&K_{\infty, h_0}&1\cr \calH_+ \subset
L_2[1,+\infty) \subset \calH_-& &\dC\cr \end{pmatrix},
\end{equation}
where all the components are described above. Using formulas \eqref{150} and \eqref{1501} we obtain
\begin{equation}\label{ex-99}
W_{\Theta_{\infty,h_0}}(z)=\frac{m_{\infty,\frac{1}{2}}(z)+ \overline h}{m_{\infty,\frac{1}{2}}(z)+h}=\frac{{i}{\sqrt{2z}} +1+i}{{i}{\sqrt{2z}}+1-i}
\end{equation}
and
\begin{equation}\label{ex-100}
V_{\Theta_{\infty,h_0}}(z)=\frac{\IM h}{m_{\infty,\frac{1}{2}}(z)+\RE h}=-\frac{1}{i\sqrt{2z}+1}.
\end{equation}
Direct substitution confirms that $V_{\Theta_{\infty,h_0}}(i)=i$. Clearly, $V_{\Theta_{\infty,h_0}}(z)\in\sM$ and our L-system $\Theta_{\infty,0}$ has infinite c-entropy according to Remark \ref{t-14}.

Similarly we can consider L-systems for which $h\ne -m_{\infty,\frac{1}{2}}(i)$ and hence $\kappa\ne0$. Then according to \eqref{e-41} we have
$$
\begin{aligned}
\kappa&=\left|\frac{m_{\infty,\frac{1}{2}}(i)+h}{m_{\infty,\frac{1}{2}}(i)+ \overline h}\right|=\left|\frac{\frac{1}{\sqrt2}-\frac{1}{\sqrt2}i+h}{\frac{1}{\sqrt2}-\frac{1}{\sqrt2}i+ \overline h}\right|=\left|\frac{1-i+\sqrt2\,\RE h+\sqrt2\,\IM h\, i}{1-i+ \sqrt2\,\RE h-\sqrt2\,\IM h\, i}\right|\\
&=\sqrt{\frac{(1+\sqrt2\,\RE h)^2+(\sqrt2\,\IM h-1)^2}{(1+\sqrt2\,\RE h)^2+(\sqrt2\,\IM h+1)^2}}.
\end{aligned}
$$
If we want our operator $T_h$ to be extremal, we (according to Theorem \ref{t-8}) set $\RE h=-m_{\infty,\frac{1}{2}}(-0)=0$. Then by the above calculations
$$
\kappa_{ext}=\sqrt{\frac{1+(\sqrt2\,\IM h-1)^2}{1+(\sqrt2\,\IM h+1)^2}}=\sqrt{\frac{1+(\IM h)^2-\sqrt2\,\IM h}{1+(\IM h)^2+\sqrt2\,\IM h}},
$$
where $\kappa_{ext}$ is the von Neumann parameter of the operator $T_h$.
As we have established in Section \ref{s5}, if the main operator $T_h$ is extremal, then $\kappa_{ext}$ satisfies $\kappa_0\le\kappa_{ext}<1$, where $\kappa_0$ is given by \eqref{e-47}. In our case $m_{\infty,\frac{1}{2}}(-0)=0$ and $|m_{\infty,\frac{1}{2}}(i)|=1$, and hence
\begin{equation}\label{e-104-kappa0}
    \kappa_0=\sqrt{\frac{\sqrt{2}-1}{\sqrt{2}+1}}=\sqrt2-1.
\end{equation}
Let us describe L-systems having the main extremal operator  with von Neumann's parameter $\kappa_0$ as in \eqref{e-104-kappa0}. The corresponding to $\kappa_0$ value of $h$ is (see \eqref{e-90-imh})
$$
h=-m_{\infty,\frac{1}{2}}(-0)+i\sqrt{(\RE m_\infty(i)-m_{\infty,\frac{1}{2}}(-0))^2+(\IM m_{\infty,\frac{1}{2}}(i))^2}=i|m_\infty(i)|=i.
$$
We have
\begin{equation}\label{ex-107}
 \left\{ \begin{array}{l}
 T_i\, y=-y^{\prime\prime} \\
 y'(1)=i y(1). \\
 \end{array} \right.
\end{equation}
The quasi-kernel of the real part of the state-space operator of our family of L-systems is determined by \eqref{e-31} as follows
\begin{equation}\label{ex-107-mu}
 \left\{ \begin{array}{l}
\hat A_{\mu}\, y=-y^{\prime\prime} \\
 y'(1)=-\frac{1}{\mu} y(1) .\\
 \end{array} \right.
\end{equation}

Now we construct two L-systems having $T_i$ as their main operator and attaining maximal c-entropy. We note that in the case in question the coefficient of dissipation $\calD=1$ and $\IM m_{\infty,\frac{1}{2}}(i)=-1/\sqrt2$. First we use \eqref{e-112-mu1} to obtain  $\mu_1=-1$. We get
$$
 \left\{ \begin{array}{l}
\hat A_{-1}\, y=-y^{\prime\prime} \\
 y'(1)=y(1) .\\
 \end{array} \right.
$$
The state-space operator of the L-system $\Theta_{-1,i}$ with $\kappa_0$ from \eqref{e-104-kappa0} and $\mu=-1$ is (see \eqref{137})
\begin{equation}\label{ex-108-A}
\begin{split}
&\bA_{-1,i}\, y=-y^{\prime\prime}-\frac {1}{1+i}\,[y^{\prime}(1)-iy(1)]\,[\delta(x-1)-\delta^{\prime}(x-1)], \\
&\bA^*_{-1,i}\, y=-y^{\prime\prime}-\frac {1}{1-i}\,[y^{\prime}(1)+iy(1)]\,[\delta(x-1)-\delta^{\prime}(x-1)].
\end{split}
\end{equation}
Also the channel operator is given by $K_{-1, i}\,c=cg_{-1, i}$, $(c\in \dC)$, where (see \eqref{146})
$$
g_{-1, i}=\sqrt2[\delta^{\prime}(x-1)-\delta (x-1)].
$$
Then $\Theta_{-1, i}$ has the form
\begin{equation}\label{ex-109-system}
\Theta_{-1, i}= \begin{pmatrix}
\bA_{-1,i}&K_{-1,i}&1\cr \calH_+ \subset
L_2[1,+\infty) \subset \calH_-& &\dC\cr \end{pmatrix},
\end{equation}
where all the entries are described above. Using formulas \eqref{150} and \eqref{1501} we obtain
\begin{equation}\label{ex-110}
W_{\Theta_{-1,i}}(z)=\frac{-1-i}{-1+i}\cdot\frac{m_{\infty,\frac{1}{2}}(z)+ \overline h}{m_{\infty,\frac{1}{2}}(z)+h}=i\frac{\sqrt{z} +1}{\sqrt{z}-1}
\end{equation}
and
\begin{equation}\label{ex-121}
V_{\Theta_{-1,i}}(z)=\frac{-i\sqrt{z}-1}{i\sqrt{z}-1}=-\frac{\sqrt{z}-i}{\sqrt{z}+i}.
\end{equation}
Direct substitution yields
$$V_{\Theta_{-1,i}}(i)=-\frac{\sqrt{i}-i}{\sqrt{i}+i}=-\frac{\frac{1}{\sqrt2}+\frac{1}{\sqrt2}i-i}{\frac{1}{\sqrt2}+\frac{1}{\sqrt2}i+i}=(\sqrt2-1)i.$$
Clearly, $V_{\Theta_{-1,i}}(z)\in\sM_{\kappa_0}$, where $\kappa_0$ is given by \eqref{e-104-kappa0}. The L-system $\Theta_{-1, i}$ in \eqref{ex-109-system} exemplifies the first L-system described in Remark \ref{r-30}.

In order to construct the second L-system described in Remark \ref{r-30} we apply \eqref{e-114-mu2} and obtain $\mu_2=1$ first. Then we get
$$
 \left\{ \begin{array}{l}
\hat A_{1}\, y=-y^{\prime\prime} \\
 y'(1)=-y(1) .\\
 \end{array} \right.
$$
The state-space operator of the L-system $\Theta_{1,i}$ with $\kappa_0$ from \eqref{e-104-kappa0} and $\mu=1$ is (see \eqref{137})
\begin{equation}\label{ex-112}
\begin{split}
&\bA_{1,i}\, y=-y^{\prime\prime}-\frac {1}{1-i}\,[y^{\prime}(1)-iy(1)]\,[\delta(x-1)+\delta^{\prime}(x-1)], \\
&\bA^*_{1,i}\, y=-y^{\prime\prime}-\frac {1}{1+i}\,[y^{\prime}(1)+iy(1)]\,[\delta(x-1)+\delta^{\prime}(x-1)].
\end{split}
\end{equation}
Also the channel operator is given by $K_{1, i}\,c=cg_{1, i}$, $(c\in \dC)$, where (see \eqref{146})
$$
g_{1, i}=\sqrt2[\delta(x-1)+\delta^{\prime}(x-1)].
$$
Then $\Theta_{1, i}$ has the form
\begin{equation}\label{ex-113-system}
\Theta_{1, i}= \begin{pmatrix}
\bA_{1,i}&K_{1,i}&1\cr \calH_+ \subset
L_2[1,+\infty) \subset \calH_-& &\dC\cr \end{pmatrix},
\end{equation}
where all the components are described above. Using formulas \eqref{150} and \eqref{1501} we obtain
\begin{equation}\label{ex-114}
W_{\Theta_{1,i}}(z)=\frac{1-i}{1+i}\cdot\frac{m_{\infty,\frac{1}{2}}(z)+ \overline h}{m_{\infty,\frac{1}{2}}(z)+h}=-i\frac{\sqrt{z} +1}{\sqrt{z}-1}
\end{equation}
and
\begin{equation}\label{ex-115}
V_{\Theta_{1,i}}(z)=\frac{\sqrt{z}+i}{\sqrt{z}-i}.
\end{equation}
Hence
$$V_{\Theta_{1,i}}(i)=\frac{\sqrt{i}+i}{\sqrt{i}-i}=\frac{\frac{1}{\sqrt2}+\frac{1}{\sqrt2}i+i}{\frac{1}{\sqrt2}+\frac{1}{\sqrt2}i-i}=(\sqrt2+1)i.$$
Clearly, $V_{\Theta_{1,i}}(z)\in\sM_{\kappa_0}^{-1}$, where $\kappa_0$ is given by \eqref{e-104-kappa0}. The L-system $\Theta_{1, i}$ in \eqref{ex-113-system} exemplifies the second L-system described in Remark \ref{r-30}.

\subsection*{Example 2} Let $\nu=3/2$. It is known \cite{ABT} that in this case
$$
m_{\infty,\frac{3}{2}}(z)= -\frac{iz-\frac{3}{2}\sqrt{z}-\frac{3}{2}i}{\sqrt{z}+i}-\frac{1}{2}=\frac{\sqrt{z}-iz+i}{\sqrt{z}+i}=1-\frac{iz}{\sqrt{z}+i}
$$
and
$$
m_{\infty,\frac{3}{2}}(-0)=1,\qquad m_{\infty,\frac{3}{2}}(i)=1+\frac{1}{\sqrt{2}}-\frac{i}{2}=\frac{2+\sqrt2-i}{2}.
$$
The minimal symmetric operator then becomes
$$
 \left\{ \begin{array}{l}
 \dA\, y=-y^{\prime\prime}+\frac{2}{x^2}y \\
 y(1)=y^{\prime}(1)=0. \\
 \end{array} \right.
$$
Note that in this case
$$\RE m_{\infty,\frac{1}{2}}(i)=1+\frac{1}{\sqrt2}> m_{\infty,\frac{1}{2}}(0)=1,$$
which taking into account that the symmetric operator $\dA$ is nonnegative,  illustrates the result of Lemma \ref{l-16}.

We are going to construct an L-system with an extremal main operator and maximum c-entropy. In order to do that we take $h$ defined by \eqref{e-109-h}. Then
\begin{equation}\label{e-134-h}
h=-1+i\sqrt{\left(1+\frac{1}{\sqrt2}-1\right)^2+\frac{1}{4}}=-1+\frac{\sqrt{3}}{2}i.
\end{equation}
Then the corresponding main operator $T_h$ is
\begin{equation}\label{ex-135}
 \left\{ \begin{array}{l}
 T_{h}\, y=-y^{\prime\prime}+\frac{2}{x^2}y \\
 y'(1)=\left(-1+\frac{\sqrt{3}}{2}i\right) y(1). \\
 \end{array} \right.
\end{equation}
This value of $h$ in \eqref{e-134-h} corresponds to the value of $\kappa_0$  given by \eqref{e-47} that is in this case
\begin{equation}\label{e-127-kappa0}
    \kappa_0=\frac{\sqrt2}{\sqrt{3}+1}.
\end{equation}
Having in mind the result of Theorem \ref{t-26}, we would like to  have an L-system with maximum c-entropy and accretive extremal state-space operator. To do so we take $\mu=\infty$ and construct the corresponding L-system $\Theta_{h,\infty}$. According to \eqref{e-31} the quasi-kernel in this case is
$$
 \left\{ \begin{array}{l}
\hat A_{\infty}\, y=-y^{\prime\prime} +\frac{2}{x^2}y\\
 y'(1)=- y(1),\\
 \end{array} \right.
$$
which is the Krein-von Neumann extension of $\dA$.
The state-space operator of this L-system $\Theta_{h,\infty}$ with $\kappa=\kappa_0$ and $\mu=\infty$ is (see \eqref{137})
\begin{equation*}\label{ex-136}
\begin{split}
&\bA_{\infty, h}\, y=-y^{\prime\prime}+\frac{2}{x^2}y-\left[ y^{\prime}(1)+\left(1-\frac{\sqrt{3}}{2}i\right)y(1)\right]\,\delta (x-1), \\
&\bA^*_{\infty, h}\, y=-y^{\prime\prime}+\frac{2}{x^2}y-\left[y^{\prime}(1)+\left(1+\frac{\sqrt{3}}{2}i\right) y(1)\right]\,\delta(x-1).
\end{split}
\end{equation*}
Its impedance function $V_{\Theta_{\infty,h}}(z)$ is such that
$$
V_{\Theta_{\infty,h}}(i)=\frac{\IM h}{m_{\infty,\frac{3}{2}}(i)+\RE h}=\frac{\frac{\sqrt{3}}{2}}{\frac{2+\sqrt2-i}{2}-1}=\frac{\sqrt3}{\sqrt2-i}=\sqrt{\frac{2}{3}}+\frac{1}{\sqrt{3}}i.
$$

Now we are going to  construct two L-systems having $T_h$ from \eqref{ex-135} as their main operator and such that their corresponding impedance functions are from the classes $\sM_{\kappa_0}$ and $\sM_{\kappa_0}^{-1}$ (as discussed in Remark \ref{r-30}), where $\kappa_0$ is given by \eqref{e-127-kappa0}.  First find the normalizing parameter $a$ that corresponds to $\kappa_0$,
$$
a=\frac{1-\kappa_0}{1+\kappa_0}=\frac{1-\frac{\sqrt2}{\sqrt{3}+1}}{1+\frac{\sqrt2}{\sqrt{3}+1}}=\frac{\sqrt{3}-\sqrt{2}+1}{\sqrt{3}+\sqrt{2}+1}=\sqrt3-\sqrt2.
$$
Then we use $V_{\Theta_{\mu_1,h}}(i)=ai$ and \eqref{150} for above value of $a$ to obtain
$$
V_{\Theta_{\mu_1,h}}(i)=\frac{\left(\frac{2+\sqrt2-i}{2}+\mu_1\right)\frac{\sqrt{3}}{2}}{\left(\mu_1+1\right)(\frac{2+\sqrt2-i}{2})-\mu_1-\frac{7}{4}} =(\sqrt3-\sqrt2)i.
$$
Solving the above equation for $\mu_1$ we get $\mu_1=-\frac{2+\sqrt3}{2}$. This yields the quasi-kernel (see \eqref{e-31})
$$
 \left\{ \begin{array}{l}
\hat A_{\mu_1}\, y=-y^{\prime\prime}+\frac{2}{x^2}y \\
 y'(1)=(\frac{\sqrt3}{2}-1)y(1) .\\
 \end{array} \right.
$$
The state-space operator of the L-system $\Theta_{\mu_1,h}$ with $\kappa_0$ from \eqref{e-104-kappa0} and \newline $\mu_1=-\frac{2+\sqrt3}{2}$ is (see \eqref{137})
\begin{equation*}\label{ex-136-A}
\begin{split}
&\bA_{\mu_1,h}\, y=-y^{\prime\prime}+\frac{2}{x^2}y-\frac{2y^{\prime}(1)+(1-{\sqrt3}i)y(1)}{{2}{\sqrt3}(1+i)}\left[{(2+\sqrt3)}\delta(x-1)-2\delta^{\prime}(x-1)\right], \\
&\bA^*_{\mu_1,h}\, y=-y^{\prime\prime}+\frac{2}{x^2}y-\frac{2y^{\prime}(1)+(1+{\sqrt3}i)y(1)}{{2}{\sqrt3}(1-i)}\left[{(2+\sqrt3)}\delta(x-1)-2\delta^{\prime}(x-1)\right].
\end{split}
\end{equation*}
Also the channel operator is given by $K_{\mu_1, h}\,c=cg_{\mu_1, h}$, $(c\in \dC)$, where (see \eqref{146})
$$
g_{\mu_1, h}=\frac{1}{2\cdot 3^{1/4}}\left[2\delta^{\prime}(x-1)-(2+\sqrt3)\delta (x-1)\right].
$$
Then $\Theta_{\mu_1, h}$ has the form
\begin{equation}\label{ex-137-system}
\Theta_{\mu_1, h}= \begin{pmatrix}
\bA_{\mu_1, h}&K_{\mu_1, h}&1\cr \calH_+ \subset
L_2[1,+\infty) \subset \calH_-& &\dC\cr \end{pmatrix},
\end{equation}
where all the components are described above. Using formulas \eqref{150} and \eqref{1501} we obtain
\begin{equation*}\label{ex-138}
W_{\Theta_{\mu_1, h}}(z)=\frac{-1-i}{-1+i}\cdot\frac{m_{\infty,\frac{3}{2}}(z)+ \overline h}{m_{\infty,\frac{3}{2}}(z)+h}=(-i)\frac{2z+\sqrt{3z} +\sqrt3 i}{2z-\sqrt{3z}-\sqrt3 i}
\end{equation*}
and
\begin{equation}\label{ex-139}
V_{\Theta_{\mu_1, h}}(z)=\frac{\left(m_{\infty,\frac{3}{2}}(z)+\mu_1\right)\IM h}{\left(\mu_1-\RE h\right)m_{\infty,\frac{3}{2}}(z)+\mu_1\RE h-|h|^2}=\frac{2iz-\sqrt{3z}-i\sqrt3}{2iz+\sqrt{3z}+i\sqrt3}.
\end{equation}
Direct substitution confirms that
$$V_{\Theta_{\mu_1,h}}(i)=\frac{2i\cdot i-\sqrt{3i}-i\sqrt3}{2i\cdot i+\sqrt{3i}+i\sqrt3}=(\sqrt3-\sqrt2)i.$$
Clearly, $$V_{\Theta_{\mu_1,h}}(z)\in\sM_{\kappa_0},$$ where $\kappa_0$ is given by \eqref{e-127-kappa0}.

Now we apply \eqref{e-114-mu2} and obtain
$$
\mu_2=\frac{\mu_1\RE h-|h|^2}{\mu_1-\RE h}=\frac{\frac{2+\sqrt3}{2}-\frac{7}{4}}{1-\frac{2+\sqrt3}{2}}=\frac{\sqrt3-2}{2}.
$$
This yields
$$
 \left\{ \begin{array}{l}
\hat A_{\mu_2}\, y=-y^{\prime\prime}+\frac{2}{x^2}y \\
 y'(1)=(\frac{\sqrt3}{2}+1)y(1) .\\
 \end{array} \right.
$$
The state-space operator of the L-system $\Theta_{\mu_2,h}$ with $\kappa_0$ from \eqref{e-104-kappa0} and $\mu_2=\frac{\sqrt3-2}{2}$ is (see \eqref{137})
\begin{equation*}\label{ex-140}
\begin{split}
&\bA_{\mu_2,h}\, y=-y^{\prime\prime}-\frac{2}{x^2}y+\frac{2y^{\prime}(1)+(1-{\sqrt3}i)y(1)}{{2}{\sqrt3}(1-i)}\left[{(\sqrt3-2)}\delta(x-1)-2\delta^{\prime}(x-1)\right], \\
&\bA^*_{\mu_2,h}\, y=-y^{\prime\prime}-\frac{2}{x^2}y+\frac{2y^{\prime}(1)+(1+{\sqrt3}i)y(1)}{{2}{\sqrt3}(1+i)}\left[{(\sqrt3-2)}\delta(x-1)-2\delta^{\prime}(x-1)\right].
\end{split}
\end{equation*}
Also the channel operator is given by $K_{\mu_2, h}\,c=cg_{\mu_2, h}$, $(c\in \dC)$, where (see \eqref{146})
$$
g_{\mu_2, h}=\frac{1}{2\cdot 3^{1/4}}\left[2\delta^{\prime}(x-1)+(\sqrt3-2)\delta (x-1)\right].
$$
Then the L-system $\Theta_{\mu_2, h}$ has the form
\begin{equation}\label{ex-141-system}
\Theta_{\mu_2, h}= \begin{pmatrix}
\bA_{\mu_2, h}&K_{\mu_2, h}&1\cr \calH_+ \subset
L_2[1,+\infty) \subset \calH_-& &\dC\cr \end{pmatrix}.
\end{equation}
 Using formulas \eqref{150}  we obtain
\begin{equation}\label{ex-142}
W_{\Theta_{\mu_2, h}}(z)=\frac{1-i}{1+i}\cdot\frac{m_{\infty,\frac{3}{2}}(z)+ \overline h}{m_{\infty,\frac{3}{2}}(z)+h}=(i)\frac{2z+\sqrt{3z} +\sqrt3 i}{2z-\sqrt{3z}-\sqrt3 i}
\end{equation}
and
\begin{equation}\label{ex-143}
V_{\Theta_{\mu_2, h}}(z)=-\frac{1}{V_{\Theta_{\mu_1, h}}(z)}=-\frac{2iz+\sqrt{3z}+i\sqrt3}{2iz-\sqrt{3z}-i\sqrt3}.
\end{equation}
So that
$$V_{\Theta_{\mu_2, h}}(i)=\frac{1}{\sqrt3-\sqrt2}i=(\sqrt3+\sqrt2)i,$$
and therefore $$V_{\Theta_{\mu_2, h}}(z)\in\sM_{\kappa_0}^{-1},$$ where $\kappa_0$ is given by \eqref{e-127-kappa0}.


\end{document}